\documentclass[11pt, a4paper]{amsart}
\usepackage{amsmath}
\usepackage{amssymb}
\usepackage{mathrsfs}
\usepackage[sort,numbers]{natbib}
\usepackage{color}
\usepackage[unicode=true,
 bookmarks=true,bookmarksnumbered=false,bookmarksopen=false,
 breaklinks=false,pdfborder={0 0 1},backref=section,colorlinks=true]
 {hyperref}
\hypersetup{
 linkcolor=blue, urlcolor=marineblue, citecolor=green, pdfstartview={FitH}, hyperfootnotes=true, unicode=true}
\numberwithin{equation}{section}
 


\newtheorem{theorem}{Theorem}[section]
\newtheorem{proposition}[theorem]{Proposition}

\newtheorem{lemma}[theorem]{Lemma}
\newtheorem{corollary}[theorem]{Corollary}

\newtheorem{remark}[theorem]{Remark}

\voffset-.5cm
\textheight22.5cm
\textwidth15cm
\oddsidemargin.35cm
\evensidemargin.35cm

\begin{document}

\baselineskip=16pt

\thispagestyle{empty}

\begin{center}\sf
{\Large $L^p$-$L^q$ decay estimates for dissipative linear}\vskip.2cm
{\Large hyperbolic systems in 1D}\vskip.25cm
\today\vskip.2cm
Corrado MASCIA\footnote{Dipartimento di  Matematica ``G. Castelnuovo'',
	Sapienza -- Universit\`a di Roma, P.le Aldo Moro, 2 - 00185 Roma (ITALY), \texttt{\tiny mascia@mat.uniroma1.it}}
and
Thinh Tien NGUYEN\footnote{Department of Mathematics, Gran Sasso Science Institute -- Istituto Nazionale di Fisica Nucleare,
Viale Francesco Crispi, 7 - 67100 L'Aquila (ITALY),  \texttt{\tiny nguyen.tienthinh@gssi.infn.it}}
\end{center}
\vskip.5cm

\begin{quote}\footnotesize\baselineskip 12pt 
{\sf Abstract.}  
Given $A,B\in M_n(\mathbb R)$, we consider the Cauchy problem for partially dissipative hyperbolic systems
having the form
\begin{equation*}
	\partial_{t}u+A\partial_{x}u+Bu=0,
\end{equation*}
with the aim of providing a detailed description of the large-time behavior.
Sharp $L^p$-$L^q$ estimates are established for the distance between the solution to the system and a time-asymptotic profile, where the profile is the superposition of diffusion waves and exponentially decaying waves.
\vskip.15cm

\noindent
{\sf Keywords:} {Large-time behavior, Dissipative linear hyperbolic systems, Asymptotic expansions}
\vskip.15cm

\noindent
{\sf MSC2010:} { 35L45, 35C20}
\end{quote}

\tableofcontents

\section{Introduction}
This work is concerned with the Cauchy problem
\begin{equation}\label{eq:original hyperbolic system}
	\partial_t u+\mathcal Lu:=\partial_t u+A\partial_xu+Bu=0,\qquad u(0,x)=u_0(x).
\end{equation}

Even if we are going to discuss properties of the system \eqref{eq:original hyperbolic system} on its own,
we primarily regard at the system \eqref{eq:original hyperbolic system} as a linearization of the nonlinear hyperbolic system
\begin{equation}\label{nonlinear1}
	\partial_t u+\partial_{x} F(u) + G(u) = 0
\end{equation}
at a constant stationary state $\bar u$ satisfying $G(\bar u)=0$ (see \citep{liu87,serre07} and descendants).
In addition, linear systems fitting in the class \eqref{eq:original hyperbolic system} emerge as models for velocity jump processes such as the Goldstein--Kac model \citep{goldstein51,kac74} (a generalization was introduced in \citep{mascia16}) and in other fields of application.

On the other hand, decay estimates for the system \eqref{eq:original hyperbolic system} have been accomplished for years. In \citep{shizuta85}, it is proved that the $L^2$-norm of the solution $u$ to \eqref{eq:original hyperbolic system} is bounded by the sum of the two terms: the first term, with respect to the $L^2$-norm of the initial datum of $u$, decays exponentially and the second one, with respect to the $L^q$-norm of the initial datum of $u$ for $q\in[1,\infty]$, decays at the rate $(1/q-1/2)/2$. In that work, the matrices $A$ and $B$ are symmetric and they satisfy the Kawashima--Shizuta condition: {\it if $z$ is an eigenvector of $A$, then $z$ does not belong to $\ker B$}, which is required for designing a compensating matrix to capture the dissipation of the system \eqref{eq:original hyperbolic system} over the degenerate kernel space of $B$ since the symmetric structure is not enough to guarantee the decay. The result is then improved in \citep{bianchini07}, if \eqref{eq:original hyperbolic system} has a convex entropy and satisfies the Kawashima--Shizuta condition, $B$ can be written in the block-diagonal form $\textrm{\normalfont diag}\,(O_{m\times m},D)$ where $O_{m\times m}$ is the $m\times m$ null matrix and $D\in M_{n-m}(\mathbb R)$ is positive definite, and by considering the parabolic equation given by applying the Chapman--Enskog expansion to the system \eqref{eq:original hyperbolic system}, the $L^p$-norm of the difference between the solution $u$ and the solution $U$ to the parabolic equation decays $1/2$ faster than the rate $(1-1/p)/2$ in terms of the $L^1\cap L^2$-norm of the initial datum of $u$. Recently, \citep{ueda12} can be seen as a generalization of \citep{shizuta85} for any non symmetric matrix $B$ under appropriate conditions.

More detailed descriptions of the asymptotic behavior have been provided for specific classes of equations by {\it $L^p$-$L^q$ estimates} {e.g.} the $L^p$-$L^q$ estimate for the Cauchy problem for the damped wave equation
\begin{equation}\label{dampedwave}
	\partial_t u+\partial_{tt} u-\Delta u=0.
\end{equation}
It shows that the time-asymptotic profile of the solution $u$ to \eqref{dampedwave} includes the solution to a heat equation and the solution to a wave equation, and when measuring the initial datum of $u$ in $L^q$, the $L^p$ distance between $u$ and this profile decays $\varepsilon>0$ faster than the rate $\alpha(p,q):=d(1/q-1/p)/2$ where $d$ is the spacial dimension (see \citep{marcati03,hosono04}).

We are not aware of any results on such kind of estimate for the general system \eqref{eq:original hyperbolic system} with general exponents $p$ and $q$. We start with the following assumptions on the matrices $A$ and $B$.

{\it {\bf Condition A.} {\sl [Hyperbolicity]} The matrix $A$ is diagonalizable with real eigenvalues.}

{\it {\bf Condition B.} {\sl [Partial dissipativity]} $0$ is a semi-simple eigenvalue of $B$ with algebraic multiplicity $m\geq 1$
and the spectrum $\sigma(B)$ of $B$ can be decomposed as $\{0\}\cup\sigma_{0}$
with $\sigma_0\subset\{\lambda\in\mathbb C\,:\,\textrm{\normalfont Re}\,\lambda>0\}$.}

Let $P_0^{(0)}$ be the unique eigenprojection associated with the eigenvalue $0$ of $B$, then the reduced system is given by
\begin{equation}\label{eq:reduced system}
	\partial_tw+C\partial_xw\approx 0,
\end{equation}
where $w:=P_0^{(0)}u$ and $C:=P_0^{(0)}AP_0^{(0)}$. One assumes

{\it {\bf Condition C.} {\sl [Reduced hyperbolicity]} The matrix $C$ is diagonalizable with real eigenvalues considered in $\ker(B)$.}

On the other hand, the requisite condition for the decay of the solution to the system \eqref{eq:original hyperbolic system}, which is related to the well-known Kawashima--Shizuta condition, is that the eigenvalues $\lambda(i\xi)$ of the operator $E(i\xi):=-(B+i\xi A)$ satisfy

{\it {\bf Condition D.} {\sl [Uniform dissipativity]} There is $\theta>0$ such that
\begin{equation}\label{eq:regularity}
	\textrm{\normalfont Re}\,(\lambda(i\xi))\le -\dfrac{\theta |\xi|^{2}}{1+|\xi|^2}, \qquad \textrm{ for } \xi\ne 0.
\end{equation}}

In this framework, we show that under the assumptions {\bf A}, {\bf B}, {\bf C} and {\bf D}, the time-asymptotic profile of the solution to the system \eqref{eq:original hyperbolic system} is the superposition of diffusion waves and exponentially decaying waves.

The diffusion waves are constructed as follows. Let $\Gamma_0$ be an oriented closed curve enclosing the eigenvalue $0$ except for the nonzero eigenvalues of $B$ in the resolvent set $\rho(B)$, one sets
\begin{equation}\label{eq:reduced resolvent coefficient}
	S_0^{(0)}:=\dfrac{1}{2\pi i}\int_{\Gamma_0}z^{-1}(B-zI)^{-1}\,dz.
\end{equation}
On the other hand, let
\begin{equation}\label{eq:P01}
	P_0^{(1)}:=-P_0^{(0)}AS_0^{(0)}-S_0^{(0)}AP_0^{(0)},
\end{equation}
one defines
\begin{equation}\label{eq:matrix D}
	D:=-\bigl(P_0^{(1)}BP_0^{(1)}+P_0^{(0)}AP_0^{(1)}+P_0^{(1)}AP_0^{(0)}\bigr).
\end{equation}
Then, we consider the Cauchy problem with respect to $U$ in $\textrm{\normalfont ran}\bigl(P_j^{(0)}\bigr)$, such that
\begin{equation}\label{eq:parabolic system}
	\partial_t U+c_j\partial_x U-P_j^{(0)}D \partial_{xx}U=0,\qquad U(0,x)=P_j^{(0)}u_0(x),
\end{equation}
where $c_j$ and $P_j^{(0)}$ are the $j$-th element of the spectrum of $C$ considered in $\ker(B)$ and the eigenprojection associated with it for $j=1,\dots,h$ and $h\le m$ is the cardinality of the spectrum of $C$ considered in $\ker(B)$ and $m$ is the algebraic multiplicity of the eigenvalue $0$ of $B$. Thus, one can choose $U:=\sum_{j=1}^{h}U_j$ where $U_j$ is the solution to the system \eqref{eq:parabolic system} for $j=1,\dots,h$.

On the other hand, the coefficients $P_0^{(0)}$ and $S_0^{(0)}$ can be computed by the formula
\begin{equation}
	P_0^{(0)}=\mathbb P_{m-1}(B)\qquad \textrm{and}\qquad S_0^{(0)}=\mathbb S_{m-1}(B),
\end{equation}
where the matrix-valued functions $\mathbb P$ and $\mathbb S$ are introduced in \eqref{func:P} and \eqref{func:S} in the appendix section. Moreover, let $\alpha>\max\{|\lambda|:\lambda\in\sigma(C)\}$ and let $C':=C+\alpha P_0^{(0)}$, then $C'$ has $h$ distinct nonzero eigenvalues denoted by $c_j'$ with algebraic multiplicities $m_j\ge 1$ for $j\in\{1,\dots,h\}$. Thus, $P_j^{(0)}$ can be computed by the formula
\begin{equation}
	P_j^{(0)}=\mathbb P_{m_j-1}(C'-c_j'I),
\end{equation}
for $j\in\{1,\dots,h\}$. Noting that the shift from $C$ to $C'$ is requisite since we consider only the eigenvalues of $C$ restricted to $\ker(B)$.

The exponentially decaying waves are constructed as follows. Due to the diagonalizable property of $A$, let $Q\in M_n(\mathbb R)$ be the invertible matrix diagonalizing $A$. Then, one sets
\begin{equation}\label{eq:A and B bar}
	\bar A:=\textrm{\normalfont diag}\,(a_1,\dots,a_n),\qquad \bar B:=Q^{-1}BQ,
\end{equation}
where $a_j\in\mathbb R$ for $j=1,\dots,n$ are the repeated eigenvalues of $A$. Let define a partition denoted by $\{\mathcal S_j:j=1,\dots,s\}$ of $\{1,\dots,n\}$ for some $s\le n$ such that $h,k\in\mathcal S_j$ if $a_h=a_k$, it is easy to see that $s$ is the cardinality of the spectrum of $\bar A$. On the other hand, we also define the matrix
\begin{equation}\label{eq:the eigenprojection associated with alphaj}
	\bigl(\Pi_j^{(0)}\bigr)_{hk}:=\begin{cases} 1 &\textrm{if } h=k\in\mathcal S_j,\\
	0 &\textrm{otherwise,} \end{cases}
\end{equation}
for $h,k=1,\dots,n$. Then we consider the Cauchy problem with respect to $V\in\textrm{\normalfont ran}\bigl(\Pi_j^{(0)}\bigr)$ such that
\begin{equation}\label{eq:hyperbolic system}
	\partial_t V+\alpha_j\partial_xV+\Pi_j^{(0)}\bar B V=0,\qquad V(0,x)=\Pi_j^{(0)}Q^{-1}u_0(x),
\end{equation}
where $\alpha_j=a_h$ if $h\in \mathcal S_j$ for $j=1,\dots,s$. Thus, we can choose $V:=Q\sum_{j=1}^sV_j$ where $V_j$ is the solution to the system \eqref{eq:hyperbolic system} for $j=1,\dots, s$.

\begin{theorem}[$L^p$-$L^q$]\label{theo:standard type}
If $u$ is the solution to the system \eqref{eq:original hyperbolic system} with the initial datum $u_0\in L^q(\mathbb R)$, the conditions {\bf A}, {\bf B}, {\bf C} and {\bf D} imply that, for $1\le q\le p\le \infty$ and $t\ge 1$, there are constants $C:=C(p,q)>0$ and $\delta>0$ such that
\begin{equation}\label{est:ellepq standard type}
	\|u-U-V\|_{L^p}\le Ct^{-\frac12\bigl(\frac1q-\frac1p\bigr)-\frac12}\|u_0\|_{L^q},
\end{equation}
where
\begin{equation}\label{est:parabolic solution-hyperbolic solution}
	\|U\|_{L^p}\le Ct^{-\frac12\bigl(\frac1q-\frac1p\bigr)}\|u_0\|_{L^q}\qquad \textrm{and}\qquad \|V\|_{L^2}\le Ce^{-\delta t}\|u_0\|_{L^2}.
\end{equation}
\end{theorem}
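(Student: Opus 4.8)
The plan is to work on the Fourier side, where the solution is $\hat u(t,\xi) = e^{tE(i\xi)}\hat u_0(\xi)$ with $E(i\xi) = -(B + i\xi A)$, and to split the analysis into the low-frequency region $|\xi| \le r$ and the high-frequency region $|\xi| > r$ for a small fixed $r > 0$. The two pieces of the asymptotic profile correspond precisely to these two regions: the diffusion waves $U$ capture the low-frequency behaviour, where the eigenvalues of $E(i\xi)$ that emanate from $0$ behave like $-i c_j \xi - P_j^{(0)} D \xi^2 + O(|\xi|^3)$, and the exponentially decaying waves $V$ capture the high-frequency behaviour, where Condition D forces $\operatorname{Re}\lambda(i\xi) \le -\theta/2$ uniformly. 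So the first step is to record the spectral decomposition of $E(i\xi)$ near $\xi = 0$ via analytic perturbation theory (Kato): the $m$-fold eigenvalue $0$ of $-B$ splits, and because of Conditions B and C the leading behaviour is governed by $C = P_0^{(0)} A P_0^{(0)}$ restricted to $\ker B$, whose eigenvalues $c_j$ and eigenprojections $P_j^{(0)}$ enter the parabolic systems \eqref{eq:parabolic system}; the second-order correction produces the diffusion matrix $D$ from \eqref{eq:matrix D}. The nonzero eigenvalues of $-B$ stay in $\{\operatorname{Re} < 0\}$ for small $\xi$ and contribute only exponentially small terms.

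Next I would estimate the low-frequency part of $u - U$. Writing $\widehat{U}(t,\xi) = \sum_{j=1}^h e^{-ic_j\xi t - P_j^{(0)} D\xi^2 t}\,\widehat{P_j^{(0)} u_0}(\xi)$ (valid for all $\xi$, but only relevant for $|\xi|\le r$), the difference $\widehat{u-U}$ on $|\xi|\le r$ is controlled by (i) the error between the true eigenvalues/eigenprojections of $E(i\xi)$ and their Taylor approximations — an $O(|\xi|^3 t)e^{-c\xi^2 t}$ type bound for the phase, plus $O(|\xi|)$ for the projections — and (ii) the contributions of the branches coming from $\sigma_0$, which are $O(e^{-ct})$. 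A Hausdorff–Young / interpolation argument then turns an $L^\infty_\xi$ or $L^2_\xi$ bound on this difference into the $L^p_x$ estimate: one bounds $\|u - U\|_{L^p} \lesssim \|\widehat{u-U}\|_{L^{p'}}$ for $p \ge 2$, and uses $|\widehat{P_j^{(0)}u_0}| \le \|u_0\|_{L^q}$ (Hausdorff–Young with $q \le 2$) together with the scaling of $\int_{|\xi|\le r} |\xi|^{k p'} e^{-c p' \xi^2 t}\,d\xi \sim t^{-(k p' + 1)/2}$. Taking $k$ large enough in the phase error and $k = 0$ with the extra $|\xi|$ from the projection error both yield the gain of $t^{-1/2}$ over the baseline rate $t^{-\frac12(1/q-1/p)}$; for $p < 2$ one interpolates between the $L^1$–$L^\infty$ and $L^2$–$L^2$ endpoints or argues directly via the explicit Gaussian kernels. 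The bound $\|U\|_{L^p} \le C t^{-\frac12(1/q - 1/p)}\|u_0\|_{L^q}$ in \eqref{est:parabolic solution-hyperbolic solution} is just the standard $L^q$–$L^p$ estimate for the convection–diffusion semigroup, with the convection $c_j\partial_x$ contributing nothing to decay.

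For the high-frequency region $|\xi| > r$, the key input is Condition D, which gives $\operatorname{Re}\lambda(i\xi) \le -\theta r^2/(1+r^2) =: -\delta_0$ uniformly, so $\|e^{tE(i\xi)}\| \le C e^{-\delta_0 t}$ for $|\xi| > r$ once $t \ge 1$ (using that $E(i\xi)$ is diagonalizable with uniformly bounded transformation matrices away from the finitely many frequencies where eigenvalues may collide — a separate small lemma handles the collision points, or one works directly with the resolvent integral representation $e^{tE(i\xi)} = \frac{1}{2\pi i}\oint e^{tz}(zI - E(i\xi))^{-1}dz$). One has to check that $V$, built from the frozen systems \eqref{eq:hyperbolic system} with matrices $\alpha_j I + \Pi_j^{(0)}\bar B$, likewise decays like $e^{-\delta t}$ in $L^2$ on the high-frequency side, which follows because $\Pi_j^{(0)}\bar B\Pi_j^{(0)}$ inherits a uniform dissipativity from Condition D applied block-wise; its low-frequency part is handled by noting it is already $O(e^{-ct})$ there or simply absorbed. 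Then $\widehat{u - U - V}$ on $|\xi| > r$ is a sum of terms each bounded by $e^{-\delta t}$ pointwise, which after Hausdorff–Young (and using $\|u_0\|_{L^q} \ge c\|u_0\|_{L^2}$-type control only where legitimate, otherwise keeping the $L^q$ norm via $\|\widehat{u_0}\|_{L^{q'}}$ and $\|e^{-\delta t}\chi_{|\xi|>r}\|$ arguments) contributes $O(e^{-\delta t})$, which is $\le C t^{-\frac12(1/q-1/p) - \frac12}$ for $t\ge 1$. Finally one combines: $\|u - U - V\|_{L^p} \le \|(u-U-V)^{\text{low}}\|_{L^p} + \|(u-U-V)^{\text{high}}\|_{L^p}$, where on the low side $V$ is negligible and the bound is the diffusion-wave error above, and on the high side $U$ is negligible and the bound is exponential.

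The main obstacle I anticipate is the uniformity in the high-frequency eigenvalue estimate: Condition D is a statement about $\operatorname{Re}\lambda(i\xi)$, but to pass from control of the eigenvalues to control of the matrix exponential $\|e^{tE(i\xi)}\|$ one needs the diagonalizing transformations to be uniformly bounded (or the Jordan structure to be controlled) over the unbounded range $|\xi| > r$, including at any frequencies where two eigenvalue branches cross. Handling this cleanly — probably via a resolvent/contour-integral representation that sidesteps diagonalization, combined with a careful bound on $\|(zI - E(i\xi))^{-1}\|$ on a contour at distance $\sim\delta_0/2$ left of the imaginary axis, plus the behaviour as $|\xi|\to\infty$ where $E(i\xi) \approx -i\xi A$ is purely oscillatory so one must extract the $-B$ part carefully — is the delicate technical heart of the argument. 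A secondary, more bookkeeping-heavy obstacle is verifying that the second-order term in the perturbation expansion of the low-frequency eigenvalues really is $-P_j^{(0)} D\xi^2$ with $D$ exactly as in \eqref{eq:matrix D}, i.e. reconciling the Chapman–Enskog-style reduced operator with the Kato perturbation series including the role of $S_0^{(0)}$ and $P_0^{(1)}$; this is routine but error-prone, and getting the constants right is what makes the estimate \emph{sharp}.
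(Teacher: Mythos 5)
Your overall framework is the same as the paper's: pass to the Fourier side $\hat u = e^{tE(i\xi)}\hat u_0$, split the frequency domain, use Kato perturbation theory to expand the eigenvalues and eigenprojections of $E(i\xi)$ near $\xi=0$ and near $\xi=\infty$ (yielding the matrices $C$, $D$ and the kernels for $U$, $V$), and interpolate between endpoint estimates. But there is a genuine gap in how you propose to pass from Fourier-side bounds to the claimed $L^p$-$L^q$ estimate for the \emph{full} range $1\le q\le p\le\infty$. Your argument hinges on Hausdorff--Young, $\|u-U\|_{L^p}\lesssim\|\widehat{u-U}\|_{L^{p'}}$ for $p\ge2$ combined with $\|\widehat{u_0}\|_{L^{q'}}\le\|u_0\|_{L^q}$ for $q\le2$, together with Plancherel at $p=q=2$. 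Interpolating between the $L^1\to L^\infty$ and $L^2\to L^2$ endpoints only fills the dual line $\{1/p+1/q=1,\ p\ge 2\}$, not the full triangle $\{1/p\le 1/q\}$; in particular, the cases $p=q=1$ and $p=q=\infty$ — both included in the theorem — are unreachable by this route. To fill the triangle one needs an $L^r\to L^r$ operator bound with rate $t^{-1/2}$ for \emph{every} $r\in[1,\infty]$, and the only way to get that is an $L^1_x$ bound on the kernel $\mathcal F^{-1}(\hat G-\hat K-\hat V)$ so that Young's inequality applies. This is precisely Propositions~\ref{prop:multiplier 1} and~\ref{prop:multiplier 2} in the paper, whose proof requires careful \emph{physical-space} pointwise bounds of Gaussian type obtained by shifting the $\xi$-contour into the complex plane, \emph{separately} for $|x|\le Ct$ (shift by an amount depending on $(x-c_jt)/t$ in the direction determined by $\arg d_{j\ell}$) and for $|x|>Ct$ (shift by $x/t$). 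You mention in passing that one could ``argue directly via the explicit Gaussian kernels'' but never carry this out, and it is the actual technical heart of the proof; without it the theorem is only proved on the Hausdorff--Young segment.

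A second, smaller gap is the frequency decomposition itself. You split into two regions $|\xi|\le r$ and $|\xi|>r$ for a \emph{small} $r$, but the high-frequency expansion $\mu_{j\ell}(i\xi)=-i\alpha_j\xi-\beta_{j\ell}+\mathcal O(|\xi|^{-1})$ (Proposition~\ref{prop:high frequency}) is only valid for $|\xi|>R$ with $R$ \emph{large}. The paper uses a three-way split $[0,\varepsilon)\cup[\varepsilon,R]\cup(R,\infty)$; the intermediate band $\varepsilon\le|\xi|\le R$ is handled separately via Proposition~\ref{prop:intermediate frequency} (finitely many exceptional points, eigenvalues holomorphic elsewhere, direct bound $|\hat G|\le e^{-\theta|\xi|^2t/(1+|\xi|^2)}$ from Condition D). Finally, your ``main obstacle'' (uniform diagonalizability for $|\xi|>r$) is real but is addressed in the paper not by resolvent contour integrals for $e^{tE}$, but by the block expansion of Remark~\ref{rem:high frequency} together with Lemma~\ref{lem:nilpotent}, which controls perturbations of nilpotent matrix exponentials uniformly — that is the mechanism you should look for rather than a general resolvent bound.
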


Going back to the $L^p$-$L^q$ decay estimate in \citep{marcati03}, if the initial condition for the Cauchy problem for \eqref{dampedwave} is given by $(u,\partial_t u)|_{t=0}=(u_0,u_1)$, then the following estimate holds
\begin{equation}\label{mnLpLq}
	\bigl\| u-U-e^{-t/2}V \bigr\|_{L^p}\le C\,t^{-\frac12(\frac1q-\frac1p)-1}\bigl\|u_0+u_1\bigr\|_{L^q}, \qquad \forall t\ge 1,
\end{equation}
where respectively, $U$ and $V$ are the solutions to the Cauchy problems
\begin{equation*}
	\left\{\begin{aligned}
		&\partial_t U-\partial_{xx} U=0,\\
		&U(x,0)=u_0(x)+u_1(x),
	\end{aligned}\right.
	\quad\textrm{and}\quad 
	\left\{\begin{aligned}
		&\partial_{tt}V-\partial_{xx}V=0,\\
		&V(x,0)=u_0(x),\quad \partial_tV(x,0)=0.
	\end{aligned}\right.
\end{equation*}
Comparing \eqref{est:ellepq standard type} with \eqref{mnLpLq}, we recognize a difference of 1/2 in the decay rates. The better decay, which is valid for the linear damped wave equation, is a consequence of an additional property, namely the invariance with respect to the transformation $x\mapsto -x$. Indeed, in terms of the Goldstein--Kac system, such symmetry implies that the eigenvalue curves of $E(i\xi)=-(B+i\xi A)$ which pass through $0$ can be expanded as $\lambda(i\xi):=-d_0\xi^2+\mathcal O(|\xi|^4)$ for some $d_0>0$ as $|\xi|\to 0$, and the fact that the error terms are $\mathcal O(|\xi|^4)$ guarantees the gain of $1$ instead of $1/2$ in the decay rate, where $1/2$ holds for general cases where the error terms are $\mathcal O(|\xi|^3)$.

Thus, we are also interested in systems fitting in the class \eqref{eq:original hyperbolic system} that have an analogous property, namely

{\bf Hypothesis S.} {\sl [Symmetry]} There is an invertible symmetric matrix $S$ such that
\begin{equation*}
	AS=-SA\qquad\textrm{and}\qquad BS=SB.
\end{equation*}

When the above assumption holds, if $u:=u(x,t)$ is a solution to \eqref{eq:original hyperbolic system}, then the reflection $v:=v(x,t)=u(-x,t)$ is a solution to the same system as well.

Let us consider a stronger assumption than the condition {\bf C} on the reduced system, namely

{\it {\bf Condition C'.} {\sl [Reduced strictly hyperbolicity]} The matrix $C$ is diagonalizable with $m$ real distinct eigenvalues considered in $\ker(B)$.}

Let $U=\sum_{j=1}^mU_j$ where $U_j$ is the solution to \eqref{eq:parabolic system} with the initial datum given by
\begin{equation}\label{eq:initial datum of parabolic system}
	U(0,x):=\bigr(P_j^{(0)}+P_j^{(1)}\partial_x\bigl)u_0(x),
\end{equation}
where $P_j^{(0)}$ is already introduced and $P_j^{(1)}$ is as follows for $j\in\{1,\dots,m\}$.

 Let $\Gamma_j$ be an oriented closed curve enclosing the nonzero eigenvalue $c_j'$ except for the other eigenvalues of $C'$ in the resolvent set $\rho(C')$ for $j\in\{1,\dots,m\}$. One sets
\begin{equation}
	S_j^{(0)}:=\dfrac{1}{2\pi i}\int_{\Gamma_j}z^{-1}(C'-zI)^{-1}\,dz,
\end{equation}
and then, $P_j^{(1)}$ can be computed by
\begin{equation}\label{eq:Pj1}
	P_j^{(1)}:=P_j^{(0)}DS_j^{(0)}+S_j^{(0)}DP_j^{(0)},
\end{equation}
for all $j\in\{1,\dots,m\}$. Similarly to before, $S_j^{(0)}$ can be computed by
\begin{equation}
	S_j^{(0)}=\mathbb S_{m_j-1}(C'-c_j'I),
\end{equation}
since $c_j'$ is simple for $j\in\{1,\dots,m\}$.

Let $V$ be the same as before, one has
\begin{theorem}[Increased decay rate]\label{theo:standard type symmetry}
With the same hypotheses as in Theorem \ref{theo:standard type}, if the condition {\bf C} is substituted by the condition {\bf C'} and if the condition {\bf S} holds, then, for $1\le q\le p\le \infty$, there is a positive constant $C:=C(p,q)>0$ such that
\begin{equation}\label{est:ellepq symmetry}
	\bigl\|u-U-V\bigr\|_{L^p}\le C\,t^{-\frac12\bigl(\frac1q-\frac1p\bigr)-1}
	\bigl\|u_0\bigr\|_{L^q},\qquad\forall t\ge1,
\end{equation}
where
\begin{equation}
	\|U\|_{L^p}\le Ct^{-\frac12\bigl(\frac1q-\frac1p\bigr)}\|u_0\|_{L^q}\qquad \textrm{and}\qquad \|V\|_{L^2}\le Ce^{-\delta t}\|u_0\|_{L^2}.
\end{equation}
\end{theorem}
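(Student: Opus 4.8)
The plan is to run the proof of Theorem~\ref{theo:standard type}, carrying one extra order of precision in the low-frequency expansion. After Fourier transform one has $\widehat u(t,\xi)=e^{tE(i\xi)}\widehat u_0(\xi)$; I would split $\mathbb R_\xi$ into a low zone $|\xi|\le r_0$, a middle zone $r_0\le|\xi|\le R$ and a high zone $|\xi|\ge R$. The middle and high zones are handled verbatim as in Theorem~\ref{theo:standard type}: Condition~{\bf D} yields $\mathrm{Re}\,\lambda(i\xi)\le-\eta<0$ for $|\xi|\ge r_0$, so $e^{tE(i\xi)}$ decays like $e^{-\eta t}$ there, while the transport--reaction approximant $V=Q\sum_j V_j$ is exactly what captures the leading high-frequency behaviour, so that $\widehat{u-V}$ decays exponentially for $|\xi|\ge R$ as well. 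Hence the middle/high contribution to $u-U-V$ is $O(e^{-\delta t})$ in every $L^p$, which for $t\ge1$ is of lower order than the rate claimed in~\eqref{est:ellepq symmetry}; and $\|V\|_{L^2}\le Ce^{-\delta t}\|u_0\|_{L^2}$ follows from Plancherel together with the uniform bound $\|e^{-t(i\xi\alpha_j+\Pi_j^{(0)}\bar B)}\|\le Ce^{-\delta t}$, again as in Theorem~\ref{theo:standard type}. Thus everything reduces to the low zone.

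For $|\xi|\le r_0$, Condition~{\bf C'} ensures that $E(i\xi)$ has exactly $m$ simple eigenvalues near $0$, carried by branches $\lambda_j(i\xi)$ analytic in $\xi$ with $\lambda_j(0)=0$ and $\lambda_j'(0)=-ic_j$, with rank-one analytic eigenprojections $\Pi_j(i\xi)=P_j^{(0)}+i\xi\,P_j^{(1)}+(i\xi)^2P_j^{(2)}+\cdots$, the remaining eigenvalues lying in $\{\mathrm{Re}\,\lambda<-\eta\}$. Consequently the low-frequency part of $\widehat u$ equals $\sum_{j=1}^m e^{t\lambda_j(i\xi)}\Pi_j(i\xi)\widehat u_0(\xi)$ up to an $O(e^{-\delta t})$ remainder, and each $U_j$ equals $\mathcal F^{-1}\bigl[e^{t\phi_j(i\xi)}(P_j^{(0)}+i\xi P_j^{(1)})\widehat u_0\bigr]$ up to $O(e^{-\delta t})$, where $\phi_j(i\xi):=-ic_j\xi-\kappa_j\xi^2$ and $\kappa_j$ is the scalar determined by $P_j^{(0)}DP_j^{(0)}=\kappa_j P_j^{(0)}$, which coincides with minus the $\xi^2$-coefficient of $\lambda_j(i\xi)$ by the computations of the preceding sections. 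Two facts then produce the improved rate. First, the correction $P_j^{(1)}\partial_x u_0$ inserted in the datum~\eqref{eq:initial datum of parabolic system} matches $\Pi_j$ to first order, so $\Pi_j(i\xi)-\bigl(P_j^{(0)}+i\xi P_j^{(1)}\bigr)=O(|\xi|^2)$. Second, Hypothesis~{\bf S}, via the identity $SE(i\xi)S^{-1}=E(-i\xi)$ (equivalently $\overline{E(i\xi)}=SE(i\xi)S^{-1}$, since $A,B$ are real), forces the odd term beyond the drift to vanish: $\lambda_j(i\xi)-\phi_j(i\xi)=O(|\xi|^4)$ --- the analogue, for moving diffusion waves, of the expansion $\lambda(i\xi)=-d_0\xi^2+\mathcal O(|\xi|^4)$ recalled for the Goldstein--Kac system in the introduction. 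Using these together with $|e^{t\phi_j(i\xi)}|=e^{-\kappa_j t\xi^2}$ ($\kappa_j>0$ by~{\bf D}) and $|e^{w}-1|\le|w|e^{|w|}$, one gets on $|\xi|\le r_0$ the pointwise multiplier bound $\bigl|\,e^{t\lambda_j(i\xi)}\Pi_j(i\xi)-e^{t\phi_j(i\xi)}(P_j^{(0)}+i\xi P_j^{(1)})\bigr|\le C\bigl(|\xi|^2+t|\xi|^4\bigr)e^{-\kappa_j t\xi^2}\le C|\xi|^2e^{-ct\xi^2}$, since $t|\xi|^4\le\xi^2$ on the effective Gaussian region $t\xi^2\lesssim1$. (Under the weaker hypotheses of Theorem~\ref{theo:standard type} the two inputs read $\Pi_j-P_j^{(0)}=O(|\xi|)$ and $\lambda_j-\phi_j=O(|\xi|^3)$, producing only an $O(|\xi|)$ multiplier, hence the half-power weaker rate.)

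From this multiplier estimate, \eqref{est:ellepq symmetry} is routine. The low-zone part of $u-U-V$ is $\sum_j K_j(\cdot,t)*u_0$ plus an $O(e^{-\delta t})$ remainder, with $K_j$ bounded by the translated kernel $\mathcal F^{-1}\bigl[|\xi|^2e^{-ct\xi^2}\bigr](\cdot-c_jt)$ up to an exponentially small correction; Young's inequality with $\tfrac1r=1+\tfrac1p-\tfrac1q$ gives $\|K_j*u_0\|_{L^p}\le\|K_j(\cdot,t)\|_{L^r}\|u_0\|_{L^q}$, and by scaling $\|K_j(\cdot,t)\|_{L^r}\le Ct^{-3/2+\frac1{2r}}=Ct^{-\frac12(\frac1q-\frac1p)-1}$. (For $2\le p\le\infty$, $1\le q\le2$ one may equivalently use Hausdorff--Young and Hölder with $\tfrac1s=\tfrac1q-\tfrac1p$, since $\||\xi|^2e^{-ct\xi^2}\|_{L^s}=Ct^{-1-\frac1{2s}}=Ct^{-\frac12(\frac1q-\frac1p)-1}$ and $\|\widehat u_0\|_{L^{q'}}\le\|u_0\|_{L^q}$.) The companion bound $\|U\|_{L^p}\le Ct^{-\frac12(\frac1q-\frac1p)}\|u_0\|_{L^q}$ is the classical heat-semigroup estimate, the $i\xi P_j^{(1)}$-piece being a half-power lower order, hence absorbed for $t\ge1$.

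The one genuinely new ingredient --- and the point I expect to be the main obstacle --- is the second fact above, namely that Hypothesis~{\bf S} kills the cubic term of the low-frequency dispersion relations once the first-order drift is removed. This is an algebraic statement about how $S$ threads through the spectral calculus: $BS=SB$ makes $S$ commute with $P_0^{(0)}$, $S_0^{(0)}$ and $D$, while $AS=-SA$ makes it anticommute with $C$ and $P_0^{(1)}$; and one must propagate $\overline{E(i\xi)}=SE(i\xi)S^{-1}$ to the reduced operator $\Pi^{(0)}(i\xi)E(i\xi)\Pi^{(0)}(i\xi)$ (with $\Pi^{(0)}(i\xi)$ the total near-zero spectral projection) and then through the perturbation expansion to the eigenvalues $\lambda_j$ and the eigenprojections $\Pi_j$. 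A secondary nuisance is that the shift $C\rightsquigarrow C'=C+\alpha P_0^{(0)}$, used to isolate the $\ker B$-part of $\sigma(C)$, does not itself respect the $S$-symmetry, so the identifications of $P_j^{(1)}$ and $\kappa_j$ via $C'$ must be done carefully; and, exactly as in Theorem~\ref{theo:standard type}, the patching of the three frequency zones and the high-frequency $u-V$ estimate must be uniform so that the extra half-power is not lost at the seams.
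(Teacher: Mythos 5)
Your proposal follows the paper's route step for step: the same frequency decomposition; the $m$ analytic simple branches near $0$ from {\bf C'} (Lemma~\ref{lem:analytic eigenvalue}); the symmetry of the dispersion polynomial (Lemma~\ref{lem:symmetry}) used to kill the cubic term (Corollary~\ref{cor:symmetry 0 group}); the refined kernel $\hat K^*$ and the sharpened estimates of Propositions~\ref{prop:low refined}, \ref{prop:multiplier 1 refined} and \ref{prop:multiplier 2 refined}; interpolation; and the middle/high zones carried over from Theorem~\ref{theo:standard type}. So as an approach it is the same.

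The step you flag as ``the main obstacle'' is, however, exactly where the argument is incomplete --- in your sketch and, as written, in the paper's Corollary~\ref{cor:symmetry 0 group} as well. What $SE(\kappa)S^{-1}=E(-\kappa)$ (equivalently, evenness of $p(\lambda,\kappa)$ in $\kappa$) gives directly is a permutation $\sigma$ of the $m$ analytic $0$-branches with $\lambda_j(\kappa)=\lambda_{\sigma(j)}(-\kappa)$ identically; since the $c_j$ are real and distinct, $\sigma$ is the involution with $c_{\sigma(j)}=-c_j$, and matching Taylor coefficients gives only $d_j=d_{\sigma(j)}$ and $e_j=-e_{\sigma(j)}$. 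This forces $e_j=0$ only when $\sigma(j)=j$, that is, when $c_j=0$ --- true for the damped wave and Goldstein--Kac examples, but not in general. Your own remark that $S$ anticommutes with $C$ exhibits the same obstruction on the operator side: $S$ maps $\ker(C-c_jI)$ onto $\ker(C+c_jI)$, so it intertwines the $j$-th reduction with the $\sigma(j)$-th rather than acting within one branch. Concretely, take $n=4$, $B=\mathrm{diag}(0,0,b_1,b_2)$ with $0<b_1\neq b_2$, $S=\mathrm{diag}(1,-1,1,-1)$, and $A$ the real symmetric matrix with $A_{ij}=1$ precisely when $i+j$ is odd. All of {\bf A}, {\bf B}, {\bf C'}, {\bf D}, {\bf S} hold with $c_{1,2}=\pm1$, and one computes
\begin{equation*}
p(\lambda,\kappa)=\det(B+\kappa A+\lambda I)=\lambda^2(b_1+\lambda)(b_2+\lambda)-\kappa^2\bigl[4\lambda^2+2(b_1+b_2)\lambda+b_1b_2\bigr],
\end{equation*}
from which the $0$-branch $\lambda(i\xi)=-i\xi-\tfrac{b_1+b_2}{2b_1b_2}\xi^2-\tfrac{3i(b_1-b_2)^2}{8b_1^2b_2^2}\xi^3+\mathcal O(\xi^4)$ has a nonzero cubic coefficient whenever $b_1\neq b_2$. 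In that case the low-frequency multiplier is only $O(|\xi|)$, not $O(|\xi|^2)$, and one recovers only the rate $t^{-\frac12(\frac1q-\frac1p)-\frac12}$ of Theorem~\ref{theo:standard type}. So this is a genuine gap: to close it one needs either an extra hypothesis forcing $c_j=0$ for all $j$, or a further structural condition that also kills the cubic coefficients on the paired branches $j\neq\sigma(j)$.
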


Once relaxing from {\bf C'} to {\bf C}, the decay rate in the estimate \eqref{est:ellepq standard type} does not increase in general since the condition {\bf S} cannot prevent the eigenvalues of $E$ which converge to $0$ from exhibiting non zero terms $(i\xi)^{3+\alpha}$ for $\alpha\in [0,1)$ in their expansions, and thus, it does not permit to have the gain of 1 in the decay rate.

The paper is organized as follows. In order to study the behavior of the solution to the system \eqref{eq:original hyperbolic system}, we introduce the asymptotic expansion of the operator $E(i\xi)=-(B+i\xi A)$ in Section \ref{sec:asymptotic expansion}. Then, Section \ref{sec:fundamental solution} and Section \ref{sec:multiplier estimates} are devoted to give the {\it a priori} estimates of the solution to the system \eqref{eq:original hyperbolic system}. Moreover, the symmetry property of the system \eqref{eq:original hyperbolic system} is also discussed in Section \ref{sec:symmetry}. Then, we prove the main theorems in Section \ref{sec:proofs of main theorems}. Finally, we let the appendix section for some useful facts of the perturbation theory for linear operators in finite dimensional space together with a tool for computing the eigenprojections.

\subsection*{Notations and Definitions}
Given a matrix operator $A$, we denote $\ker(A)$, $\textrm{\normalfont ran}(A)$, $\rho(A)$ and $\sigma(A)$ the kernel, the range, the resolvent set and the spectrum of $A$ respectively.

On the other hand, we call $\lambda\in\mathbb C$ is an eigenvalue of $A$ considered in a domain $\mathcal D$ if there is $u\in \mathcal D$ such that $u\ne O_{n\times 1}$ and $Au=\lambda u$.

For $x\in \mathbb C$ small enough, if $A(x)=A^{(0)}+\mathcal O(|x|)$ and $\lambda\in\sigma(A)$ satisfying $\lambda(x)\to \lambda^{(0)}$ as $|x|\to 0$ where $\lambda^{(0)}\in\sigma(A^{(0)})$, the set of all such eigenvalues of $A$ is called the $\lambda^{(0)}$-group. Moreover, $P$ is called the total projection of a group if $P$ is the sum of the eigenprojections associated with the eigenvalues belonging to that group.

Let $T:\mathbb R\to \mathcal B$ where $\mathcal B$ is a Banach space with some suitable norm $|\cdot|_{\mathcal B}$. Define the $L^p(\mathbb R,\mathcal B)$-norm of $T$ as follows.
\begin{equation*}
	\|T\|_{L^p}:=\left(\int_{-\infty}^{+\infty}|T(x)|_{\mathcal B}\,dx\right)^{1/p},\qquad 1\le p<\infty,
\end{equation*}
and
\begin{equation*}
	\|T\|_{L^\infty}:=\textrm{ess\,sup}_{-\infty<x<+\infty} |T(x)|_{\mathcal B}.
\end{equation*}
From here, we use the notation $|\cdot|$ instead of $|\cdot|_{\mathcal B}$ to indicate the norm associated with $\mathcal B$.

Let $m$ be a tempered distribution, $m$ is called a Fourier multiplier on $L^p$, for $1\le p\le \infty$, if
\begin{equation*}
	\sup_{\|f\|_{L^p}=1}\|\mathcal F^{-1}(m)*f\|_{L^p}<+\infty.
\end{equation*}
The $M_p$ space, for $1\le p\le \infty$, is the space of Fourier multipliers endowed with the norm
\begin{equation*}
	\|m\|_{M_p}=\sup_{\|f\|_{L^p}=1}\|\mathcal F^{-1}(m)*f\|_{L^p}.
\end{equation*}

\section{Asymptotic expansions}\label{sec:asymptotic expansion}

We study the asymptotic expansions of the eigenvalues of the operator $E(i\xi)=-(B+i\xi A)$ by dividing the frequency domain $\xi\in \mathbb R$ into the low frequency as $|\xi |\to 0$, the intermediate frequency as $|\xi|$ away from $0$ and $+\infty$ and the high frequency as $|\xi|\to +\infty$.

Primarily, we consider the low-frequency case. Due to the fact that the eigenvalues of $E$ converge to the eigenvalues of $B$ as $|\xi|\to 0$ in general and the condition {\bf B}, the eigenvalues of $E$ are divided into two groups such that one among them contains the eigenvalues of $E$ converging to $0$ as $|\xi|\to 0$. Thus, we will study these two groups separately for the low-frequency case. We also recall the matrices $C$ and $D$ in \eqref{eq:reduced system} and \eqref{eq:matrix D} respectively.

\begin{proposition}[Low frequency 1]\label{prop:low frequency 1}
Let $h\in \mathbb Z^+$ be the cardinality of the spectrum of the matrix $C$ considered in $\ker(B)$. If the condition {\bf C} holds, then, for $j\in\{1,\dots,h\}$, there is $h_j\in \mathbb Z^+$ to be less than or equal to the algebraic multiplicity of the $j$-th eigenvalue of $C$ considered in $\ker(B)$ such that there are $h_j$ groups of the eigenvalues of $E$ and the approximation of the elements of the $\ell$-th group has the form
\begin{equation}\label{eq:expansion of lambdajell}
	\lambda_{j\ell}(i\xi)=-ic_j\xi-d_{j\ell} \xi^2+{\scriptstyle\mathcal O}(|\xi|^2),\qquad |\xi|\to 0,
\end{equation}
where $c_j\in\sigma(C)$ considered in $\ker(B)$ and $d_{j\ell}\in\sigma\bigl(P_j^{(0)}DP_j^{(0)}\bigr)$ considered in $\ker(C-c_jI)$ for $\ell=1,\dots,h_j$ with $P_j^{(0)}$ the eigenprojection associated with $c_j$. In particular, if the condition {\bf D} holds, then
\begin{equation}\label{eq:real part of djell}
	\textrm{\normalfont Re}\,(d_{j\ell})\ge \theta>0, \qquad \textrm{ for } \ell=1,\dots,h_j \textrm{ and } j=1,\dots, h.
\end{equation}

Moreover, the total projection associated with the $\ell$-th group is then approximated by
\begin{equation}\label{eq:expansion of Pjell}
	P_{j\ell}(i\xi)=P_{j\ell}^{(0)}+\mathcal O(|\xi|),\qquad |\xi|\to 0,
\end{equation}
where $P_{j\ell}^{(0)}$ is the eigenprojection associated with $d_{j\ell}$ considered in $\ker(C-c_jI)$ for $\ell\in\{1,\dots,h_j\}$ and $j\in\{1,\dots,h\}$.
\end{proposition}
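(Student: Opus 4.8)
The plan is to regard $E(i\xi)=-B-i\xi A$ as a holomorphic family of $n\times n$ matrices in the parameter $i\xi$ and to apply the finite-dimensional analytic perturbation theory recalled in the appendix — the reduction process through a transformation function — twice in succession: first near the eigenvalue $0$ of $-B$, then near each eigenvalue $-c_j$ of the resulting reduced operator. \emph{First step.} By Condition {\bf B}, $0$ is a semisimple eigenvalue of $-B$, isolated from $-\sigma_0$, with eigenprojection $P_0^{(0)}$ of rank $m$. Hence for $|\xi|$ small the eigenvalues of $E(i\xi)$ converging to $0$ form a group whose total projection $P(i\xi)$ is analytic at $\xi=0$ with $P(0)=P_0^{(0)}$, and the part of $E(i\xi)$ in $\mathrm{ran}\,P(i\xi)$ is similar — via a transformation function equal to $I+\mathcal O(|\xi|)$ — to an operator $\hat E(i\xi)$ on the fixed space $\ker B=\mathrm{ran}\,P_0^{(0)}$. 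Since $E$ is affine in $i\xi$ and $0$ is semisimple, the standard reduction formulas give
\[
  \hat E(i\xi)=-i\xi\,C+(i\xi)^2\,P_0^{(0)}AS_0^{(0)}AP_0^{(0)}+\mathcal O(|\xi|^3),\qquad|\xi|\to0,
\]
with $C=P_0^{(0)}AP_0^{(0)}$ and $S_0^{(0)}$ the reduced resolvent of $B$ at $0$, for which $BS_0^{(0)}=I-P_0^{(0)}$ and $P_0^{(0)}S_0^{(0)}=0$.

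\emph{Second step.} As $\hat E(0)=0$, I factor $\hat E(i\xi)=i\xi\,\hat F(i\xi)$ with $\hat F(i\xi)=-C+i\xi\,P_0^{(0)}AS_0^{(0)}AP_0^{(0)}+\mathcal O(|\xi|^2)$ still analytic, so that the $0$-group of $\sigma(E(i\xi))$ equals $i\xi\,\sigma(\hat F(i\xi))$. By Condition {\bf C}, $\hat F(0)=-C$ restricted to $\ker B$ is diagonalizable with distinct eigenvalues $-c_1,\dots,-c_h$, each therefore semisimple and isolated with eigenprojection $P_j^{(0)}$. Applying the reduction process to $\hat F$ around $-c_j$, the $(-c_j)$-group splits according to the distinct eigenvalues of the first-order coefficient $P_j^{(0)}\bigl(P_0^{(0)}AS_0^{(0)}AP_0^{(0)}\bigr)P_j^{(0)}$ restricted to $\mathrm{ran}\,P_j^{(0)}$, and a short computation using $S_0^{(0)}BS_0^{(0)}=S_0^{(0)}$, $P_0^{(0)}S_0^{(0)}=0$ together with \eqref{eq:P01} and \eqref{eq:matrix D} identifies this coefficient with $P_j^{(0)}AS_0^{(0)}AP_j^{(0)}=P_j^{(0)}DP_j^{(0)}$. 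Hence the $(-c_j)$-group of $\hat F(i\xi)$ breaks into $h_j$ sub-groups, $h_j\le\dim\mathrm{ran}\,P_j^{(0)}$ (the algebraic multiplicity of $c_j$ in $\ker B$), with elements $\mu(i\xi)=-c_j+i\xi\,d_{j\ell}+{\scriptstyle\mathcal O}(|\xi|)$, $d_{j\ell}\in\sigma\bigl(P_j^{(0)}DP_j^{(0)}\bigr)$ considered in $\ker(C-c_jI)$, and total projections converging to the eigenprojection $P_{j\ell}^{(0)}$ of $P_j^{(0)}DP_j^{(0)}$ at $d_{j\ell}$. Multiplying back by $i\xi$, the $0$-group of $E(i\xi)$ correspondingly splits and $\lambda_{j\ell}(i\xi)=i\xi\,\mu(i\xi)=-ic_j\xi-d_{j\ell}\xi^2+{\scriptstyle\mathcal O}(|\xi|^2)$, i.e. \eqref{eq:expansion of lambdajell}, while pulling the projection back through the two transformation functions (each $I+\mathcal O(|\xi|)$) gives \eqref{eq:expansion of Pjell}.

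\emph{Third step.} Under Condition {\bf D}, insert the expansion of $\lambda_{j\ell}$ into \eqref{eq:regularity}: since $c_j$ and $\xi$ are real, $\mathrm{Re}\,\lambda_{j\ell}(i\xi)=-\mathrm{Re}(d_{j\ell})\,\xi^2+{\scriptstyle\mathcal O}(\xi^2)$, while $-\theta\xi^2/(1+\xi^2)=-\theta\xi^2+{\scriptstyle\mathcal O}(\xi^2)$; dividing by $\xi^2$ and letting $\xi\to0$ forces $\mathrm{Re}(d_{j\ell})\ge\theta$, which is \eqref{eq:real part of djell}.

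The two invocations of the reduction process and the accompanying sign bookkeeping are routine. The genuinely delicate point is the double degeneracy at $\xi=0$: the whole $0$-group collapses onto the single eigenvalue $0$, and within it the first-order coefficient $-c_j$ may itself be a multiple eigenvalue, so one cannot read off the $\xi^2$-coefficient from a single perturbation step. This is precisely what forces the intermediate normalization $\hat E(i\xi)=i\xi\,\hat F(i\xi)$ and a second perturbation of $\hat F$, and with it the need to verify that the abstract second-order coefficient produced by the reduction coincides with the explicitly defined matrix $D$ (and that $P_{j\ell}^{(0)}$ is the stated eigenprojection). Everything else is a direct application of the perturbation theory in the appendix.
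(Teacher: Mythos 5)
Your argument is essentially correct and runs along the same two-step reduction the paper uses: first reduce $E(i\xi)$ near the semisimple eigenvalue $0$ of $-B$, factor out $i\xi$, then reduce the resulting operator near $-c_j$ to extract the $\xi^2$ coefficient, and finally read off $\mathrm{Re}\,(d_{j\ell})\ge\theta$ from Condition {\bf D} by dividing by $\xi^2$ and letting $\xi\to 0$. The only organizational difference is in how the second-order coefficient is presented: you invoke Kato's reduced operator on the \emph{fixed} subspace $\ker B$ via a transformation function, obtaining $P_0^{(0)}AS_0^{(0)}AP_0^{(0)}$ as the quadratic coefficient, whereas the paper computes $T(\zeta)P_0(\zeta)$ directly on the $\zeta$-dependent subspace $\mathrm{ran}\,P_0(\zeta)$ using the explicit expansion $P_0(\zeta)=P_0^{(0)}+\zeta P_0^{(1)}+\mathcal O(|\zeta|^2)$, producing the matrix $D$ of \eqref{eq:matrix D}. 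These differ as full matrices, but — as you correctly flag and as one verifies from $P_j^{(0)}S_0^{(0)}=S_0^{(0)}P_j^{(0)}=0$ and $P_j^{(0)}P_0^{(0)}=P_j^{(0)}$ — their compressions satisfy $P_j^{(0)}\bigl(P_0^{(0)}AS_0^{(0)}AP_0^{(0)}\bigr)P_j^{(0)}=P_j^{(0)}DP_j^{(0)}$, so the eigenvalue $d_{j\ell}$ and eigenprojection $P_{j\ell}^{(0)}$ are the same in both formulations. The three conclusions \eqref{eq:expansion of lambdajell}, \eqref{eq:real part of djell}, \eqref{eq:expansion of Pjell} are established. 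One small thing worth stating explicitly (the paper does it via Propositions \ref{prop:subprojections}--\ref{prop:construction of subprojections}): the semisimplicity of $-c_j$ as an eigenvalue of $-C$ restricted to $\ker B$ is what licenses the second reduction step, and it is a consequence of Condition {\bf C} rather than an extra assumption — you assert this in passing, and it should be tied back to {\bf C}.
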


\begin{proof}
This proof is dealt with the $0$-group of $E$ {\it i.e.} the group contains the eigenvalues of $E$ converging to $0$ as $|\xi|\to 0$. On the other hand, we can consider $T(\zeta):=B+\zeta A$ where $\zeta=i\xi$ instead of $E$ in order to apply Proposition \ref{prop:subprojections} and Proposition \ref{prop:construction of subprojections} since $E=-T$. The proof then includes three steps of approximation and reduction steps interlacing them. 

{\bf Step 0:} It is obvious that the approximation of the elements of the $0$-group of $T$ has the form
\begin{equation*}
	\lambda_0(\zeta)={\scriptstyle\mathcal O}(1),\qquad |\zeta|\to 0.
\end{equation*}
On the other hand, by Proposition \ref{prop:construction of subprojections}, the total projection associated with this group is approximated by
\begin{equation*}
	P_0(\zeta)=P_0^{(0)}+\mathcal O(|\zeta|),\qquad |\zeta|\to 0,
\end{equation*}
where $P_0^{(0)}$ is the eigenprojection associated with the eigenvalue $0$ of $B$.

In particular, we can perform a more accurate expansion of $P_0$. Indeed, we have
\begin{equation}\label{eq:expansion of P0}
	P_0(\zeta)=P_0^{(0)}+\zeta P_0^{(1)}+\mathcal O(|\zeta|^2),\qquad |\zeta|\to 0,
\end{equation}
where $P_0^{(1)}$ can be computed by the formula \eqref{eq:P01}. We will prove the formula \eqref{eq:P01} in brief. As $|\zeta|\to 0$, for $z\in \Gamma$ any compact set contained in the resolvent set $\rho(B)$ of $B$, we have the uniformly convergent expansion
\begin{equation}\label{eq:expansion of the resolvent of T}
	(T(\zeta)-zI)^{-1}= (B-zI)^{-1}-\zeta (B-zI)^{-1}A(B-zI)^{-1}+\mathcal O(|\zeta|),\qquad |\zeta|\to 0,
\end{equation}
and we also have the expansion about $0$ of the resolvent 
\begin{equation}\label{eq:expansion of the resolvent of B}
	(B-zI)^{-1}=\sum_{h=-\infty}^{-1}\bigl(N_0^{(0)}\bigr)^hz^h+P_0^{(0)}+\sum_{h=1}^{+\infty}\bigl(S_0^{(0)}\bigr)^hz^h,
\end{equation}
where $P_0^{(0)},N_0^{(0)}$ and $S_0^{(0)}$ are the eigenprojection, the nilpotent matrix and the reduced resolvent coefficient associated with the eigenvalue $0$ of $B$ respectively. On the other hand, the formula for $S_0^{(0)}$ is introduced in \eqref{eq:reduced resolvent coefficient}. The expansions \eqref{eq:expansion of the resolvent of T} and \eqref{eq:expansion of the resolvent of B} can be obtained easily due to the properties of the resolvent (see \citep{kato}). Therefore, since the total projection $P_0$ deduced from Proposition \ref{prop:construction of subprojections} can be seen as the Cauchy integral
\begin{equation*}
	P_0(\zeta)=-\dfrac{1}{2\pi i}\int_{\Gamma_0}(T(\zeta)-zI)^{-1}\,dz,
\end{equation*}
where $\Gamma_0$ is an oriented closed curve enclosing $0$ except for the other eigenvalues of $B$ in the resolvent set $\rho(B)$. Hence, since $\Gamma_0$ is a compact set of $\rho(B)$, one can apply \eqref{eq:expansion of the resolvent of T} and \eqref{eq:expansion of the resolvent of B} into the integral formula of $P_0$ and we thus obtain \eqref{eq:expansion of P0} by computing the residue.

{\bf Reduction step:} From Proposition \ref{prop:subprojections} and Proposition \ref{prop:construction of subprojections}, one also has
\begin{equation*}
	\mathbb C^n=\textrm{\normalfont ran}(P_0)\oplus \left(\mathbb C^n-\textrm{\normalfont ran}(P_0)\right),\qquad T=TP_0+T(I-P_0).
\end{equation*}
Thus, the study of the $0$-group of $T$ considered in $\mathbb C^n$ is reduced to the study of the eigenvalues of $TP_0$ considered in $\textrm{\normalfont ran}(P_0)$.

{\bf Step 1:} Under the condition {\bf B}, the eigenvalue $0$ of $B$ is semi-simple {\it i.e.} $BP_0^{(0)}=O_{n\times 1}$ and $\textrm{\normalfont ran}(P_0^{(0)})=\ker(B)$. Thus, based on the expansion \eqref{eq:expansion of P0} of $P_0$ and the fact that $TP_0=P_0TP_0$, one has
\begin{equation*}
	\begin{aligned}
	T(\zeta)P_0(\zeta)&=\bigl(P_0^{(0)}+\zeta P_0^{(1)}+\mathcal O(|\zeta|^2)\bigr)(B+\zeta A)\bigl(P_0^{(0)}+\zeta P_0^{(1)}+\mathcal O(|\zeta|^2)\bigr)\\
	&=\zeta \bigl( C-\zeta D+\mathcal O(|\zeta|^2)\bigr),\qquad |\zeta|\to 0,
	\end{aligned}
\end{equation*}
where $C$ is in \eqref{eq:reduced system} and $D$ is in \eqref{eq:matrix D}.
It follows that $\lambda\in \sigma(TP_0)$ considered in $\textrm{\normalfont ran}(P_0)$ if and only if $\tilde \lambda :=\zeta^{-1}\lambda$ is an eigenvalue of $T_0(\zeta):=C-\zeta D+\mathcal O(|\zeta|^2)$ considered in $\textrm{\normalfont ran}(P_0)$. Therefore, it returns to the eigenvalue problem of $T_0$ considered in the domain $\textrm{\normalfont ran}(P_0)$ and one can apply again Proposition \ref{prop:construction of subprojections}.

Let $c_j$ be the $j$-th element of $\sigma(C)$ considered in $\ker(B)=\textrm{\normalfont ran}(P_0^{(0)})$ for $j\in\{1,\dots,h\}$, then by Proposition \ref{prop:construction of subprojections}, $\tilde \lambda\in\sigma(T_0)$ considered in $\textrm{\normalfont ran}(P_0)$ if and only if $\tilde \lambda \to c_j$ as $|\zeta|\to 0$ for some $j\in\{1,\dots,h\}$. Thus, $\lambda\in\sigma(TP_0)$ considered in $\textrm{\normalfont ran}(P_0)$ if and only if $\zeta^{-1}\lambda\to c_j$ as $|\zeta|\to 0$ for some $j\in\{1,\dots,h\}$. One concludes that the eigenvalues of $TP_0$ considered in $\textrm{\normalfont ran}(P_0)$ are characterized by $c_j$ for $j=1,\dots,h$ and thus they are divided into $h$ groups such that the approximation of the elements of the $j$-th group with respect to $c_j$ has the form
\begin{equation*}
	\lambda_j(\zeta)=c_j\zeta+{\scriptstyle\mathcal O}(|\zeta|),\qquad |\zeta|\to 0.
\end{equation*}
and on the other hand, by Proposition \ref{prop:construction of subprojections}, the total projection associated with this group is approximated by
\begin{equation}\label{eq:expansion of Pj}
	P_j(\zeta)=P_j^{(0)}+\mathcal O(|\zeta|),\qquad |\zeta|\to 0,
\end{equation}
where $P_j^{(0)}$ is the eigenprojection associated with $c_j$ considered in $\ker(B)$ for $j=1,\dots,h$.

{\bf Reduction step:} By Proposition \ref{prop:construction of subprojections}, $T_0$ commutes with $P_j$ for all $j=1,\dots,h$ and one has
\begin{equation*}
	\textrm{\normalfont ran}(P_0)=\bigoplus_{j=1}^h\textrm{\normalfont ran}(P_j),\qquad T_0=\sum_{j=1}^h(T_0P_j).
\end{equation*}
The study of the eigenvalues of $T_0$ considered in $\textrm{\normalfont ran}(P_0)$ is then reduced to the study of the eigenvalues of $T_0P_j$ considered in $\textrm{\normalfont ran}(P_j)$ for $j=1,\dots,h$.

{\bf Final step:} Under the condition {\bf C}, for $j\in\{1,\dots,h\}$, the eigenvalue $c_j$ of $C$ is semi-simple {\it i.e.} $(C-c_jI)P_j^{(0)}=O_{n\times 1}$ and $\textrm{\normalfont ran}(P_j^{(0)})=\ker(C-c_jI)$. Thus, based on the expansion \eqref{eq:expansion of Pj} of $P_j$ and the fact that $T_0P_j=P_jT_0P_j$, one has
\begin{equation*}
	\begin{aligned}
	(T_0(\zeta)-c_jI)P_j(\zeta)&=\bigl(P_j^{(0)}+\mathcal O(|\zeta|)\bigr)\bigl(C-c_jI-\zeta D+\mathcal O(|\zeta|^2)\bigr)\bigl(P_j^{(0)}+\mathcal O(|\zeta|)\bigr)\\
	&=\zeta \bigl( -D_j+\mathcal O(|\zeta|)\bigr),\qquad |\zeta|\to 0.
	\end{aligned}
\end{equation*}
where $D_j:=P_j^{(0)}DP_j^{(0)}$. It follows that $\lambda\in \sigma(T_0P_j)$ considered in $\textrm{\normalfont ran}(P_j)$ if and only if $\tilde \lambda :=\zeta^{-1}(\lambda-c_j)$ is an eigenvalue of $T_j(\zeta):=-D_j+\mathcal O(|\zeta|)$ considered in $\textrm{\normalfont ran}(P_j)$. Therefore, it returns to the eigenvalue problem of $T_j$ considered in the domain $\textrm{\normalfont ran}(P_j)$ and one can apply again Proposition \ref{prop:construction of subprojections}.

For $j\in\{1,\dots,h\}$, let $h_j$ be the cardinality of the spectrum of $D_j$ considered in $\ker(C-c_jI)=\textrm{\normalfont ran}(P_j^{(0)})$ and let $d_{j\ell}$ be the $\ell$-th element of the spectrum for $\ell=1,\dots,h_j$. Then by Proposition \ref{prop:construction of subprojections}, $\tilde \lambda\in\sigma(T_j)$ considered in $\textrm{\normalfont ran}(P_j)$ if and only if $\tilde \lambda \to -d_{j\ell}$ as $|\zeta|\to 0$ for some $\ell\in\{1,\dots,h_j\}$. Thus, $\lambda\in\sigma(T_0P_j)$ considered in $\textrm{\normalfont ran}(P_j)$ if and only if $\zeta^{-1}(\lambda-c_j)\to -d_{j\ell}$ as $|\zeta|\to 0$ for some $\ell\in\{1,\dots,h_j\}$. One concludes that the eigenvalues of $T_0P_j$ considered in $\textrm{\normalfont ran}(P_j)$ are characterized by $d_{j\ell}$ for $\ell=1,\dots,h_j$ and thus they are divided into $h_j$ groups such that the approximation of the elements of the $\ell$-th group with respect to $d_{j\ell}$ has the form
\begin{equation*}
	\lambda_{j\ell}(\zeta)=c_j\zeta-d_{j\ell}\zeta^2+{\scriptstyle\mathcal O}(|\zeta|^2),\qquad |\zeta|\to 0.
\end{equation*}
and on the other hand, by Proposition \ref{prop:construction of subprojections}, the total projection associated with this group is approximated by
\begin{equation}
	P_{j\ell}(\zeta)=P_{j\ell}^{(0)}+\mathcal O(|\zeta|),\qquad |\zeta|\to 0,
\end{equation}
where $P_{j\ell}^{(0)}$ is the eigenprojection associated with $d_{j\ell}$ considered in $\ker(C-c_jI)$ for $\ell=1,\dots,h_j$.

We then deduce from the above steps of approximation for $E(i\xi)=-T(i\xi)$ by multiplying $\lambda_{j\ell}(i\xi)$ by $-1$ to obtain \eqref{eq:expansion of lambdajell}, and \eqref{eq:expansion of Pjell} is the same as $P_{j\ell}(i\xi)$ for each $j\in\{1,\dots,h\}$ and $\ell=1,\dots,h_j$.

Finally, we prove the estimate \eqref{eq:real part of djell}. For $j\in\{1,\dots,h\}$ and $\ell\in\{1,\dots,h_j\}$, since $\lambda_{j\ell}$ in \eqref{eq:expansion of lambdajell} can be seen as an eigenvalue of $E$ and since $c_j$ is real by the condition {\bf C}, if the condition {\bf D} holds, then for $|\xi|$ small, one has
\begin{equation*}
	\textrm{\normalfont Re}\,(\lambda_{j\ell}(i\xi))=-\textrm{\normalfont Re}(d_{j\ell})|\xi|^2+\textrm{\normalfont Re}\,({\scriptstyle\mathcal O}(|\xi|^2))\le -\dfrac{\theta|\xi|^2}{1+|\xi|^2}.
\end{equation*}
Passing through the limit as $|\xi|\to 0$, one has the desired estimate. The proof is done.
\end{proof}

\begin{remark}\label{rem:low frequency 1}
As a consequence, for $|\xi|$ small, in $\textrm{\normalfont ran}(P_{j\ell})$, the operator $E$ has the representation
\begin{equation}
	E_{j\ell}(i\xi)=(-ic_j \xi-d_{j\ell}\xi^2)I-\xi^2N_{j\ell}^{(0)}+\mathcal O(|\xi|^3),
\end{equation}
where $N_{j\ell}^{(0)}$ is the nilpotent matrix associated with the eigenvalue $d_{j\ell}$ of $P_j^{(0)}DP_j^{(0)}$
considered in $\ker(C-c_jI)$ for $j\in\{1,\dots,h\}$ and $\ell\in\{1,\dots,h_j\}$.
\end{remark}

\begin{proposition}[Low frequency 2]\label{prop:low frequency 2}
Let $k\in\mathbb Z^+$ be the number of the nonzero distinct eigenvalues of $B$. If the condition {\bf B} holds, then there are $k$ groups of the eigenvalues of $E$ such that the approximation of the elements of the $j$-th group has the form
\begin{equation}\label{eq:expansion of etaj}
	\eta_j(i\xi)=-e _j+{\scriptstyle \mathcal O}(1),\qquad |\xi|\to 0,
\end{equation}
where $e_j\in\sigma(B)$ with $\textrm{\normalfont Re}\,(e_j)>0$ for all $j=1,\dots,k$.

Moreover, the total projection associated with the $j$-th group is then approximated by
\begin{equation}\label{eq:expansion of Fj}
	F_j(i\xi)=F_j^{(0)}+\mathcal O\bigl(|\xi|\bigr),\qquad |\xi|\to 0.
\end{equation}
\end{proposition}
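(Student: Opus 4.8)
The plan is to run the same perturbation-theoretic argument used in the proof of Proposition \ref{prop:low frequency 1}, but here a single approximation step suffices: the eigenvalues under consideration stay bounded away from $0$ as $|\xi|\to0$, so no rescaling and no iterated reduction of the operator is needed.

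First I would put $T(\zeta):=B+\zeta A$ with $\zeta=i\xi$, so that $E(i\xi)=-T(\zeta)$ and $T(0)=B$. Let $e_1,\dots,e_k$ denote the distinct nonzero eigenvalues of $B$; by condition \textbf{B} each satisfies $\textrm{\normalfont Re}\,e_j>0$, and $0$ is separated from $\{e_1,\dots,e_k\}$, so for each $j$ one can fix an oriented closed curve $\gamma_j\subset\rho(B)$ enclosing $e_j$ and none of the other points of $\sigma(B)$ (in particular not the eigenvalue $0$). Then I would invoke Proposition \ref{prop:construction of subprojections}: the resolvent $(T(\zeta)-zI)^{-1}$ is analytic in $\zeta$, uniformly for $z$ on the compact set $\gamma_j$, hence for $|\zeta|$ small one has $\gamma_j\subset\rho(T(\zeta))$ and the portion of $\sigma(T(\zeta))$ enclosed by $\gamma_j$ is exactly the $e_j$-group, each of whose elements converges to $e_j$, i.e.\ has the form $e_j+{\scriptstyle\mathcal O}(1)$ as $|\zeta|\to0$. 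Multiplying by $-1$ gives the corresponding eigenvalues of $E(i\xi)$ as $-e_j+{\scriptstyle\mathcal O}(1)$, which is \eqref{eq:expansion of etaj}; since $B$ has exactly $k$ distinct nonzero eigenvalues this produces exactly $k$ such groups.

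For the projections, the total projection of the $e_j$-group is the Cauchy integral $F_j(i\xi)=-\frac{1}{2\pi i}\int_{\gamma_j}(T(\zeta)-zI)^{-1}\,dz$. Substituting the expansion $(T(\zeta)-zI)^{-1}=(B-zI)^{-1}+\mathcal O(|\zeta|)$, valid uniformly for $z\in\gamma_j$ exactly as in \eqref{eq:expansion of the resolvent of T}, and integrating term by term yields $F_j(i\xi)=F_j^{(0)}+\mathcal O(|\xi|)$, where $F_j^{(0)}:=-\frac{1}{2\pi i}\int_{\gamma_j}(B-zI)^{-1}\,dz$ is the eigenprojection of $B$ associated with $e_j$; this is \eqref{eq:expansion of Fj}.

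I do not expect any genuine obstacle here. The only point that really needs condition \textbf{B} is the separation of the nonzero spectrum of $B$ from $0$, which is what allows each $\gamma_j$ to isolate a single $e_j$ and simultaneously guarantees $\textrm{\normalfont Re}\,e_j>0$; everything else is the standard splitting of a finite-dimensional operator under analytic perturbation, and, unlike in Proposition \ref{prop:low frequency 1}, no further reduction is required because these eigenvalues never collapse onto the degenerate value $0$.
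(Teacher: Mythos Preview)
Your proposal is correct and follows essentially the same argument as the paper: set $T(\zeta)=B+\zeta A$, apply Proposition \ref{prop:construction of subprojections} to the nonzero eigenvalues $e_j$ of $B$ to obtain the $e_j$-groups with approximations $e_j+{\scriptstyle\mathcal O}(1)$ and total projections $F_j^{(0)}+\mathcal O(|\zeta|)$, then multiply the eigenvalue expansion by $-1$ to pass from $T$ to $E$. The paper's proof is slightly more terse but identical in substance.
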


\begin{proof}
Similarly to the proof of Proposition \ref{prop:low frequency 1}, we consider the operator $T(\zeta)=B+\zeta A$ where $\zeta=i\xi$. However, in this case, we study the eigenvalues of $T$ such that they converge to the nonzero eigenvalues of $B$ as $|\zeta|\to 0$. Let $e_j$ be the $j$-th element of the spectrum of $B$ except for $0$ for $j=1,\dots,k$. Then by Proposition \ref{prop:construction of subprojections}, for any $\eta\in\sigma(T)$ does not converge to $0$, $\eta\to e_j$ for some $j\in\{1,\dots,k\}$. Hence, the approximation of these eigenvalues of $T$ is
\begin{equation*}
	\eta_j(\zeta)=e_j+{\scriptstyle\mathcal O}(1),\qquad |\zeta|\to 0,
\end{equation*}
and also from Proposition \ref{prop:construction of subprojections}, the total projection associated with this group is approximated by
\begin{equation}
	F_j(\zeta)=F_j^{(0)}+\mathcal O(|\zeta|),\qquad |\zeta|\to 0,
\end{equation}
where $F_j^{(0)}$ is the eigenprojection associated with $e_j$ for $j=1,\dots,k$. In particular, $\textrm{\normalfont Re}\,(e_j)>0$ due to the condition {\bf B}.

Finally, since $E(i\xi)=-T(i\xi)$, we obtain \eqref{eq:expansion of etaj} by multiplying $\eta_j(i\xi)$ by $-1$ and \eqref{eq:expansion of Fj} is the same as $F_j(i\xi)$ for all $j=1,\dots,k$.
\end{proof}

\begin{remark}\label{rem:low frequency 2}
As a consequence, for $|\xi|$ small, in $\textrm{\normalfont ran}(F_j)$, the operator $E$ has the representation
\begin{equation}
	E_j(i\xi)=-e_jI-M_j^{(0)}+\mathcal O(|\xi|),
\end{equation}
where $M_j^{(0)}$ is the nilpotent matrix associated with the eigenvalue $e_j$ of $B$ for $j\in\{1,\dots,k\}$.
\end{remark}

The intermediate-frequency case is obtained as follows.

\begin{proposition}[Intermediate frequency]\label{prop:intermediate frequency}
In the compact domain $\varepsilon\le |\xi|\le R$, there is only a finite number of the exceptional points at which the eigenprojections and the nilpotent parts associated with the eigenvalues of $E$ may have poles even the eigenvalues are continuous there.

On the other hand, in every simple domain excluded the exceptional points, the operator $E$ has $r$ (independent from $\xi$) distinct holomorphic eigenvalues denoted by $\nu_j$ with constant algebraic multiplicity together with holomorphic eigenprojections and nilpotent parts denoted by $\Psi_j$ and $ \Xi_j$ associated with them respectively for $j\in\{1,\dots, r\}$.

If the condition {\bf D} holds, then $\textrm{\normalfont Re}\,(\nu)<0$ for any $\nu\in\sigma(E)$ in the domain $\varepsilon\le |\xi|\le R$.
\end{proposition}
\begin{proof}
See \citep{kato}.
\end{proof}

For the high frequency, in order to analyze the eigenvalues of $E(i\xi)=-(B+i\xi A)$, one can analyze the eigenvalues of the operator $\bar E(i\xi):=Q^{-1}E(i\xi)Q=(-i\xi)(\bar A+(i\xi)^{-1} \bar B)$ where $\bar A$ and $\bar B$ are already introduced in \eqref{eq:A and B bar}.

\begin{proposition}[High frequency]\label{prop:high frequency}
Let $s\in \mathbb Z^+$ be the cardinality of the spectrum of the matrix $\bar A$. If the condition {\bf A} holds, then, for $j\in\{1,\dots,s\}$, there is $s_j\in \mathbb Z^+$ to be less than or equal to the algebraic multiplicity of the $j$-th eigenvalue of $\bar A$ such that there are $s_j$ groups of the eigenvalues of $\bar E$ and the approximation of the elements of the $\ell$-th group has the form
\begin{equation}\label{eq:expansion of mujell}
	\mu_{j\ell}(i\xi)=-i\alpha_j\xi-\beta_{j\ell}+{\scriptstyle\mathcal O}(|\xi|^{-1}),\qquad |\xi|\to +\infty,
\end{equation}
where $\alpha_j\in\sigma(\bar A)$ considered in $\mathbb C^n$ and $\beta_{j\ell}\in\sigma\bigl(\Pi_j^{(0)}\bar B\Pi_j^{(0)}\bigr)$ considered in $\ker(\bar A-\alpha_jI)$ for $\ell=1,\dots,s_j$ with $\Pi_j^{(0)}$ defined in \eqref{eq:the eigenprojection associated with alphaj} is the eigenprojection associated with $\alpha_j$. In particular, if the condition {\bf D} holds, then
\begin{equation}\label{eq:real part of betajell}
	\textrm{\normalfont Re}\,(\beta_{j\ell})\ge \theta>0, \qquad \textrm{ for } \ell=1,\dots,s_j \textrm{ and } j=1,\dots, s.
\end{equation}

Moreover, the total projection associated with the $\ell$-th group is then approximated by
\begin{equation}\label{eq:expansion of Pijell}
	\Pi_{j\ell}(i\xi)=\Pi_{j\ell}^{(0)}+\mathcal O(|\xi|^{-1}),\qquad |\xi|\to +\infty,
\end{equation}
where $\Pi_{j\ell}^{(0)}$ is the eigenprojection associated with $\beta_{j\ell}$ considered in $\ker(\bar A-\alpha_jI)$ for $\ell\in\{1,\dots,s_j\}$ and $j\in\{1,\dots,s\}$.
\end{proposition}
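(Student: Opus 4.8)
The plan is to run the argument of Proposition \ref{prop:low frequency 1} almost verbatim, with the roles of the ``unperturbed'' and ``perturbing'' matrices interchanged and with the large-frequency regime supplying a small perturbation parameter. Put $\zeta:=(i\xi)^{-1}$, so that $|\zeta|\to0$ as $|\xi|\to+\infty$; then
\begin{equation*}
	\bar E(i\xi)=-\bar B-i\xi\bar A=-\zeta^{-1}\bigl(\bar A+\zeta\bar B\bigr)=-\zeta^{-1}\tilde T(\zeta),\qquad \tilde T(\zeta):=\bar A+\zeta\bar B,
\end{equation*}
with $\tilde T$ holomorphic near $\zeta=0$. Hence $\lambda\in\sigma(\bar E(i\xi))$ if and only if $-\zeta\lambda\in\sigma(\tilde T(\zeta))$, and since $\bar E$ is similar to $E$, it suffices to expand the eigenvalues and total projections of $\tilde T(\zeta)$ as $|\zeta|\to0$ by Proposition \ref{prop:subprojections} and Proposition \ref{prop:construction of subprojections}, now applied with $\bar A$ in the former role of $B$.

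\textbf{First reduction.} By Condition \textbf{A}, $\bar A=\textrm{\normalfont diag}\,(a_1,\dots,a_n)$, so every $\alpha_j\in\sigma(\bar A)$ is semi-simple, with eigenprojection $\Pi_j^{(0)}$ given by \eqref{eq:the eigenprojection associated with alphaj} and $\ker(\bar A-\alpha_jI)=\textrm{\normalfont ran}(\Pi_j^{(0)})$. By Proposition \ref{prop:construction of subprojections} the eigenvalues of $\tilde T(\zeta)$ split into $s$ groups converging to the $\alpha_j$, with total projections $\Pi_j(\zeta)=\Pi_j^{(0)}+\mathcal O(|\zeta|)$, and the $\alpha_j$-group is studied on $\textrm{\normalfont ran}(\Pi_j(\zeta))$ through $\tilde T\Pi_j=\Pi_j\tilde T\Pi_j$. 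Since $(\bar A-\alpha_jI)\Pi_j^{(0)}=\Pi_j^{(0)}(\bar A-\alpha_jI)=0$, the $\bar A$-part contributes only at order $|\zeta|^2$, whence
\begin{equation*}
	(\tilde T(\zeta)-\alpha_jI)\Pi_j(\zeta)=\Pi_j(\zeta)\bigl(\bar A-\alpha_jI+\zeta\bar B\bigr)\Pi_j(\zeta)=\zeta\bigl(\Pi_j^{(0)}\bar B\Pi_j^{(0)}+\mathcal O(|\zeta|)\bigr);
\end{equation*}
therefore $\lambda\in\sigma(\tilde T\Pi_j)$ on $\textrm{\normalfont ran}(\Pi_j)$ if and only if $\zeta^{-1}(\lambda-\alpha_j)$ is an eigenvalue of $\tilde T_j(\zeta):=\Pi_j^{(0)}\bar B\Pi_j^{(0)}+\mathcal O(|\zeta|)$ on $\textrm{\normalfont ran}(\Pi_j)$ — one reduction shorter than in Proposition \ref{prop:low frequency 1}, since here no first-order correction $\Pi_j^{(1)}$ is needed to identify the leading reduced matrix.

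\textbf{Second reduction.} A further application of Proposition \ref{prop:construction of subprojections} to $\tilde T_j(\zeta)$, whose value at $\zeta=0$ is $\Pi_j^{(0)}\bar B\Pi_j^{(0)}$ acting on $\ker(\bar A-\alpha_jI)$, shows that its eigenvalues split into $s_j\le\dim\ker(\bar A-\alpha_jI)$ groups converging to the $\beta_{j\ell}\in\sigma(\Pi_j^{(0)}\bar B\Pi_j^{(0)})$ considered in $\ker(\bar A-\alpha_jI)$, with total projections $\Pi_{j\ell}(\zeta)=\Pi_{j\ell}^{(0)}+\mathcal O(|\zeta|)$. Undoing the reduction scaling gives $\lambda_{j\ell}(\zeta)=\alpha_j+\beta_{j\ell}\zeta+{\scriptstyle\mathcal O}(|\zeta|)$ for the corresponding eigenvalue of $\tilde T(\zeta)$; multiplying by $-\zeta^{-1}=-i\xi$ and returning to the variable $\xi$ yields \eqref{eq:expansion of mujell}, while $\Pi_{j\ell}$ is unchanged by the scalar rescaling, which gives \eqref{eq:expansion of Pijell}. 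Exactly as in Remark \ref{rem:low frequency 1}, the nilpotent part of $\beta_{j\ell}$ must be kept separate from the scalar term so that the error displayed in \eqref{eq:expansion of mujell} is genuinely of lower order.

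\textbf{Dissipativity and main obstacle.} Each $\mu_{j\ell}(i\xi)$ is an eigenvalue of $\bar E(i\xi)$, hence of $E(i\xi)$, and $\alpha_j\in\mathbb R$ by Condition \textbf{A}; so if Condition \textbf{D} holds, then, for $|\xi|$ large,
\begin{equation*}
	\textrm{\normalfont Re}\,(\mu_{j\ell}(i\xi))=-\textrm{\normalfont Re}\,(\beta_{j\ell})+{\scriptstyle\mathcal O}(1)\le-\frac{\theta|\xi|^2}{1+|\xi|^2},
\end{equation*}
and letting $|\xi|\to+\infty$ gives \eqref{eq:real part of betajell}. The one genuinely delicate point, as already in Proposition \ref{prop:low frequency 1}, is the bookkeeping of the powers of $\zeta$ through the two reductions and the final rescaling, together with the separation of the nilpotent parts; no analytic input beyond Condition \textbf{A}, the resolvent expansions and the appendix lemmas is needed, and Condition \textbf{D} enters only in the last display.
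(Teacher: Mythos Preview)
Your proof is correct and follows essentially the same route as the paper's own argument: the substitution $\zeta=(i\xi)^{-1}$ reducing to $\tilde T(\zeta)=\bar A+\zeta\bar B$, the first reduction grouping by $\alpha_j$ via semi-simplicity of $\bar A$, the second reduction grouping by $\beta_{j\ell}$ on $\ker(\bar A-\alpha_jI)$, the undoing of the scaling by $-\zeta^{-1}$, and the passage to the limit $|\xi|\to+\infty$ under Condition \textbf{D} all match the paper step for step. Your remark that this is ``one reduction shorter'' than the low-frequency case (no analogue of $P_0^{(1)}$ is needed because the $\bar A$-part already vanishes at leading order after projecting) is correct and is also implicit in the paper's proof.
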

\begin{proof}
Similarly to before, we can consider $T(\zeta):=\bar A+\zeta \bar B$ where $\zeta=(i\xi)^{-1}$ firstly in order to apply Proposition \ref{prop:subprojections} and Proposition \ref{prop:construction of subprojections} since $|\zeta|\to 0$ as $|\xi|\to +\infty$. The proof then consists of two steps of approximation and one reduction step between them.

{\bf First step:} The eigenvalues of $T$ are divided into several groups characterized by $\alpha_j\in\sigma(\bar A)$ for $j=1,\dots,s$. Moreover, for $j\in\{1,\dots,s\}$, the approximation for the elements of the $\alpha_j$-group is
\begin{equation*}
	\mu_j(\zeta)=\alpha_j+{\scriptstyle\mathcal O}(1),\qquad |\zeta|\to 0,
\end{equation*}
and the total projection associated with this group is then approximated by
\begin{equation}\label{eq:expansion of Pij}
	\Pi_j(\zeta)=\Pi_j^{(0)}+\mathcal O(|\zeta|),\qquad |\zeta|\to 0,
\end{equation}
where $\Pi_j^{(0)}$ is the eigenprojection associated with the eigenvalue $\alpha_j$ of $\bar A$. In particular, $\Pi_j^{(0)}$ is exactly the same as \eqref{eq:the eigenprojection associated with alphaj} since the eigenprojection $\Pi_j^{(0)}$ can be computed explicitly by the Cauchy integral
\begin{equation*}
	\Pi_j^{(0)}=-\dfrac{1}{2\pi i}\int_{\Gamma_j}(\bar A-zI)^{-1}\,dz=-\dfrac{1}{2\pi i}\int_{\Gamma_j}\textrm{\normalfont diag}\,(a_1-z,\dots,a_n-z)^{-1}\,dz,
\end{equation*}
where $\Gamma_j$ is an oriented closed curve enclosing $\alpha_j$ except for the other eigenvalues of $\bar A$ in the resolvent set $\rho(\bar A)$. Hence, one obtains that
\begin{equation*}
	(\Pi_j^{(0)})_{hk}=\begin{cases} 1&\textrm{if } h=k,a_h=\alpha_j,\\
	0&\textrm{otherwise},
	\end{cases}=\begin{cases} 1 &\textrm{if } h=k\in\mathcal S_j,\\
	0&\textrm{otherwise}, \end{cases}
\end{equation*}
for all $h,k=1,\dots,n$ due to the definition of $\mathcal S_j$ the $j$-th element of the partition $\{\mathcal S_j:j=1,\dots,s\}$ of $\{1,\dots,n\}$.

{\bf Reduction step:} By Proposition \ref{prop:construction of subprojections}, $T$ commutes with $\Pi_j$ for all $j=1,\dots,s$ and one has
\begin{equation*}
	\mathbb C^n=\bigoplus_{j=1}^s\textrm{\normalfont ran}(\Pi_j),\qquad T=\sum_{j=1}^s(T\Pi_j).
\end{equation*}
It implies that the study of the eigenvalues of $T$ considered in $\mathbb C^n$ is reduced to the study of the eigenvalues of $T\Pi_j$ considered in $\textrm{\normalfont ran}(\Pi_j)$ for $j=1,\dots,s$.

{\bf Final step:} Under the condition {\bf A}, the eigenvalues of $\bar A$ are semi-simple {\it i.e.} $\bar A\Pi_j^{(0)}=\alpha_j \Pi_j^{(0)}$ and $\textrm{\normalfont ran}(\Pi_j^{(0)})=\ker(\bar A-\alpha_jI)$ for all $j=1,\dots,s$. Thus, based on the expansion \eqref{eq:expansion of Pij} of $\Pi_j$ and the fact that $T\Pi_j=\Pi_jT\Pi_j$, one has
\begin{equation*}
	\begin{aligned}
	(T(\zeta)-\alpha_jI)\Pi_j(\zeta)&=\bigl(\Pi_j^{(0)}+\mathcal O(|\zeta|)\bigr)(\bar A-\alpha_jI+\zeta \bar B)\bigl(\Pi_j^{(0)}+\mathcal O(|\zeta|)\bigr)\\
	&=\zeta \bigl(  \Pi_j^{(0)}\bar B\Pi_j^{(0)}+\mathcal O(|\zeta|)\bigr),\qquad |\zeta|\to 0,
	\end{aligned}
\end{equation*}
for $j=1,\dots,s$. It follows that $\mu\in \sigma(T\Pi_j)$ considered in $\textrm{\normalfont ran}(\Pi_j)$ if and only if $\tilde \mu :=\zeta^{-1}(\mu-\alpha_j)$ is an eigenvalue of $T_j(\zeta):=\Pi_j^{(0)}\bar B\Pi_j^{(0)}+\mathcal O(|\zeta|)$ considered in $\textrm{\normalfont ran}(\Pi_j)$ for $j=1,\dots,s$. Therefore, it returns to the eigenvalue problem of $T_j$ considered in the domain $\textrm{\normalfont ran}(\Pi_j)$ for $j=1,\dots,s$ and one can apply again Proposition \ref{prop:construction of subprojections}.

For $j\in\{1,\dots,s\}$, let $s_j$ be the cardinality of the spectrum of $\Pi_j^{(0)}\bar B\Pi_j^{(0)}$ considered in $\ker(\bar A-\alpha_jI)=\textrm{\normalfont ran}(\Pi_j^{(0)})$ and let $\beta_{j\ell}$ be the $\ell$-th elements of the spectrum for $\ell=1,\dots,s_j$. Then, by Proposition \ref{prop:construction of subprojections}, $\tilde\mu\in\sigma(T_j)$ considered in $\textrm{\normalfont ran}(\Pi_j)$ if and only if $\tilde\mu\to \beta_{j\ell}$ as $|\zeta|\to 0$ for some $\ell\in\{1,\dots,s_j\}$. Thus, $\mu\in \sigma(T\Pi_j)$ considered in $\textrm{\normalfont ran}(\Pi_j)$ if and only if $\zeta^{-1}(\mu-\alpha_j)\to \beta_{j\ell}$ as $|\zeta|\to 0$ for some $\ell\in\{1,\dots,s_j\}$. It implies the eigenvalues of $T\Pi_j$ considered in $\textrm{\normalfont ran}(\Pi_j)$ are characterized by $\beta_{j\ell}$ such that the approximation of the elements of the $\ell$-th group with respect to $\beta_{j\ell}$ is
\begin{equation*}
	\mu_{j\ell}(\zeta)=\alpha_j+\beta_{j\ell}\zeta+{\scriptstyle\mathcal O}(|\zeta|),\qquad |\zeta|\to 0,
\end{equation*}
and also by Proposition \ref{prop:construction of subprojections} that the total projection associated with this group is approximated by
\begin{equation*}
	\Pi_{j\ell}(\zeta)=\Pi_{j\ell}^{(0)}+\mathcal O(|\zeta|),\qquad |\zeta|\to 0,
\end{equation*}
where $\Pi_{j\ell}^{(0)}$ is the eigenprojection associated with $\beta_{j\ell}$ considered in $\ker(\bar A-\alpha_jI)$ for $\ell=1,\dots,s_j$.

We deduce from the above steps of approximation for $\bar E(i\xi)=(-i\xi)T\bigl((i\xi)^{-1}\bigr)$ by multiplying $\lambda_{j\ell}\bigl((i\xi)^{-1}\bigr)$ by $(-i\xi)$ to obtain \eqref{eq:expansion of mujell}, and \eqref{eq:expansion of Pijell} is the same as $\Pi_{j\ell}\bigl((i\xi)^{-1})$ for each $j\in\{1,\dots,s\}$ and $\ell=1,\dots,s_j$.

Finally, we prove the estimate \eqref{eq:real part of betajell}. For $j\in\{1,\dots,s\}$ and $\ell\in\{1,\dots,s_j\}$, since $\mu_{j\ell}$ in \eqref{eq:expansion of mujell} can be seen as an eigenvalue of $\bar E$ and thus of $E=Q\bar EQ^{-1}$ and since $\alpha_j$ is real by the condition {\bf A}, if the condition {\bf D} holds, then for $|\xi|$ large, one has
\begin{equation*}
	\textrm{\normalfont Re}\,(\mu_{j\ell}(i\xi))=-\textrm{\normalfont Re}(\beta_{j\ell})+\textrm{\normalfont Re}\,({\scriptstyle\mathcal O}(1))\le -\dfrac{\theta|\xi|^2}{1+|\xi|^2}.
\end{equation*}
Passing through the limit as $|\xi|\to +\infty$, one has the desired estimate. We finish the proof.
\end{proof}

\begin{remark}\label{rem:high frequency}
As a consequence, for $|\xi|$ large, in $\textrm{\normalfont ran}(\Pi_{j\ell})$, the operator $E$ has the representation
\begin{equation}
	E_{j\ell}(i\xi)=(-i\alpha_j \xi-\beta_{j\ell})I-\Theta_{j\ell}^{(0)}+\mathcal O(|\xi|^{-1}),
\end{equation}
where $\Theta_{j\ell}^{(0)}$ is the nilpotent matrix associated with the eigenvalue $\beta_{j\ell}$ of $\Pi_j^{(0)}\bar B\Pi_j^{(0)}$ considered in $\ker(\bar A-\alpha_jI)$ for $j\in\{1,\dots,s\}$ and $\ell\in\{1,\dots,s_j\}$.
\end{remark}

\section{Fundamental solution}\label{sec:fundamental solution}
The aim of this section is to introduce the estimates for the fundamental solution to \eqref{eq:original hyperbolic system} in the frequency space. Let consider the fundamental system
\begin{equation}\label{eq:fundamental system}
	\partial_t\hat G-E\hat G=0,\qquad \hat G_{t=0}=I,
\end{equation}
where $E=E(i\xi)=-(B+i\xi A)$ with $\xi\in\mathbb R$.

One sets the following kernel
\begin{equation}\label{eq:parabolic kernel}
	\hat K(\xi,t):=\sum_{j,\ell=1}^{h,h_j}e^{(-ic_j\xi-d_{j\ell}\xi^2)t}e^{-N_{j\ell}^{(0)}\xi^2t}P_{j\ell}^{(0)},
\end{equation}
and the kernel
\begin{equation}\label{eq:exponential decay kernel}
	\hat V(\xi,t):=Q\sum_{j,\ell=1}^{s,s_j}e^{(-i\alpha_j\xi-\beta_{j\ell})t}e^{-\Theta_{j\ell}^{(0)}t}\Pi_{j\ell}^{(0)}Q^{-1},
\end{equation}
where the coefficients are introduced in the previous section.

Moreover, we introduce the two useful lemmas used in this section as follows.
\begin{lemma}\label{lem:nilpotent}
	If $X$ is a constant complex nilpotent matrix, then for all $\varepsilon'>0$, there exists $C=C(\varepsilon')>0$ such that
\begin{equation*}
	\bigl| e^{c X+Y}-e^{cX}\bigr|\le Ce^{\varepsilon' |c|+C|Y|}|Y|
\end{equation*}
and
\begin{equation*}
	\bigl| e^{c X+Y}-e^{cX}-e^{cX}Y\bigr|\le Ce^{\varepsilon' |c|+C|Y|}|Y|^2
\end{equation*}
for every complex constant $c:=c(t)$ and matrix $Y:=Y(t)$ for $t>0$.
\end{lemma}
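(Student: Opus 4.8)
The plan is to work entirely with the power series of the matrix exponential and to exploit the fact that a nilpotent $X$ of size $n$ satisfies $X^n = O$, so that $e^{cX}$ is a matrix polynomial in $c$ of degree at most $n-1$; in particular $|e^{cX}| \le C_0(1+|c|)^{n-1} \le C e^{\varepsilon'|c|}$ for any $\varepsilon'>0$, with $C$ depending on $\varepsilon'$ and on $X$. First I would record this polynomial bound, together with the elementary bounds $|e^{Y}| \le e^{|Y|}$ and $|e^{Y}-I| \le |Y|e^{|Y|}$, $|e^{Y}-I-Y|\le \tfrac12|Y|^2 e^{|Y|}$, which come directly from the series $\sum_k Y^k/k!$.

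For the first inequality, the natural device is the integral (Duhamel) identity
\begin{equation*}
	e^{cX+Y}-e^{cX} = \int_0^1 \frac{d}{ds}\,e^{s(cX+Y)}e^{(1-s)cX}\,ds
	= \int_0^1 e^{s(cX+Y)}\,Y\,e^{(1-s)cX}\,ds.
\end{equation*}
Then I would estimate the integrand by $|e^{s(cX+Y)}| \le |e^{scX}|\,|e^{s(cX+Y)-scX}|$; applying the polynomial bound to $|e^{scX}|$ (with $s\le 1$) and a crude bound $|e^{s(cX+Y)-scX}| \le e^{C|cX|\,|Y|+\cdots}$ — or, more cleanly, bounding $|e^{s(cX+Y)}|$ directly via its series by $e^{s(|c||X|+|Y|)}$ and absorbing $e^{|c||X|}$ into $e^{\varepsilon'|c|}$ only after noticing this is not quite enough — leads to $|e^{cX+Y}-e^{cX}| \le Ce^{\varepsilon'|c|+C|Y|}|Y|$. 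The subtle point here is that a naive bound $|e^{cX+Y}|\le e^{|c||X|+|Y|}$ gives growth $e^{|c||X|}$, not $e^{\varepsilon'|c|}$; so the argument must keep the nilpotent part $cX$ separate from the bounded perturbation $Y$ throughout, only using the series estimate on the commutator-type remainder $e^{s(cX+Y)}e^{-scX}-I$, which one bounds by $C(|c|+1)^{n-1}|Y|e^{C(|c|+1)^{n-1}|Y|}$ via iterated Duhamel and then re-absorbs the polynomial-in-$|c|$ factor into $e^{\varepsilon'|c|}$ at the cost of enlarging $C$.

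For the second inequality I would apply Duhamel once more, writing
\begin{equation*}
	e^{cX+Y}-e^{cX}-e^{cX}Y
	= \int_0^1 \bigl(e^{s(cX+Y)}e^{-scX}-I\bigr)\,e^{scX}\,Y\,e^{(1-s)cX}\,ds
	+ \int_0^1 \bigl(e^{scX}Y e^{(1-s)cX}-e^{cX}Y\bigr)\,ds,
\end{equation*}
and estimate each term: the first by the first inequality (the factor $e^{s(cX+Y)}e^{-scX}-I$ carries one power of $|Y|$, the explicit $Y$ carries another, and the remaining exponentials give the polynomial-in-$|c|$ factor absorbed into $e^{\varepsilon'|c|}$), and the second by expanding $e^{scX}Ye^{(1-s)cX}-e^{cX}Y$, which is $O(|c|^{n-1})$ times $|Y|$ but vanishes to the required order after integration only if one instead groups it with the main difference. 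The cleanest route is to subtract and add $e^{cX}(e^{Y}-I)$ and use $|e^{Y}-I-Y|\le C|Y|^2 e^{|Y|}$ for that piece, leaving $e^{cX+Y}-e^{cX}e^{Y}$, which one treats by Duhamel and the BCH-type remainder; since $X$ and $Y$ need not commute this remainder is a sum of iterated integrals of commutators, each bounded by a polynomial in $|c|$ times $|Y|^2$, again absorbed into the stated form. I expect the main obstacle to be exactly this bookkeeping: separating the nilpotent growth $(1+|c|)^{n-1}$ from the perturbative smallness in $|Y|$ so that the final bound has the clean shape $Ce^{\varepsilon'|c|+C|Y|}|Y|^k$ rather than something with $e^{c|X|}$ in it.
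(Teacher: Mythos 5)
Your plan takes a genuinely different route from the paper's, and it has a gap that the paper's route is specifically designed to avoid. The paper's proof rests on a single device that your proposal does not invoke: since $X$ is nilpotent, for every $\varepsilon'>0$ there is an invertible $S$ (e.g.\ conjugate a strictly upper triangular form of $X$ by $\textrm{\normalfont diag}(1,\delta,\dots,\delta^{n-1})$ with $\delta$ small) such that $|S^{-1}XS|\le\varepsilon'$. Working in the induced norm $|Z|':=|S^{-1}ZS|$, one has $|cX|'\le\varepsilon'|c|$, and then the elementary series estimate $|e^{Z+W}-e^Z|\le|W|\,e^{|Z|+|W|}$, applied with $Z=cX$ and $W=Y$ in the primed norm, gives the first inequality directly with the clean exponent $\varepsilon'|c|+C|Y|$; returning to the original norm only multiplies the constant by $|S|\,|S^{-1}|$. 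The second inequality is obtained the same way from the second-order Taylor remainder of $\exp$. The renorming makes $|cX|$ controllably small, so one never needs to keep $cX$ ``separate'' from $Y$ inside the Duhamel integrand at all.

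Your plan keeps the original norm and replaces the renorming by the polynomial bound $|e^{cX}|\le C(1+|c|)^{n-1}\le Ce^{\varepsilon'|c|}$. That observation is correct for $e^{cX}$ alone, but the Duhamel integrand also contains $e^{s(cX+Y)}$, and here you meet exactly the obstacle you flag yourself: setting $F(s):=e^{-scX}e^{s(cX+Y)}$, one has $F'(s)=(e^{-scX}Ye^{scX})F(s)$ with $|e^{-scX}Ye^{scX}|\le C(1+|c|)^{2(n-1)}|Y|$, so Gronwall yields $|F(s)-I|\le C(1+|c|)^{2(n-1)}|Y|\,e^{C(1+|c|)^{2(n-1)}|Y|}$. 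A polynomial-in-$|c|$ factor \emph{multiplying} $|Y|$ inside an exponent cannot be ``re-absorbed into $e^{\varepsilon'|c|}$ at the cost of enlarging $C$'': already for $n=2$ the ratio $e^{(1+|c|)|Y|}/e^{\varepsilon'|c|+C|Y|}$ tends to $+\infty$ along $|c|=|Y|\to\infty$, for any fixed $\varepsilon',C$. More tellingly, taking $X=\left(\begin{smallmatrix}0&1\\0&0\end{smallmatrix}\right)$, $Y=\left(\begin{smallmatrix}0&0\\y&0\end{smallmatrix}\right)$, $|c|=M^2$, $|y|=M^{-1}$, your chain bounds $|e^{cX+Y}|$ by something of order $e^{M^3}$, whereas the true size is only $e^{\sqrt{M}}$ and the lemma's target is $e^{\varepsilon'M^2+CM^{-1}}$: the method loses the estimate even though the statement is true. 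The same $|c|$--$|Y|$ coupling defeats your sketch of the second inequality, since the BCH remainder in $e^{cX+Y}-e^{cX}e^{Y}$ starts with $\tfrac12[cX,Y]$, which is of size $|c|\,|Y|$, not $|Y|^2$. The missing idea is precisely the renorming $|X|\le\varepsilon'$; once it is in place, both estimates follow from the elementary Taylor bounds for $\exp$ and the polynomial bound you begin with becomes unnecessary.
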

\begin{proof}
The proof is based on the existence of a basis of $\mathbb C^n$ such that $|X|\le \varepsilon'$ for any fixed $\varepsilon'>0$ once written in this basis, then the constant $C(\varepsilon')$ can be chosen as the product of the norm of the changing basis matrix and the norm of its inverse for any matrix norm. The second inequality due to the fact that the first order derivative $\textrm{\normalfont d}_{\exp}$ at $X$ of the application $X\to e^X$ is $e^X$ and thus one has
\begin{equation*}
	\bigl| e^{X+Y}-e^X-e^XY\bigr|\le C|Y|^2\sup_{s\in[0,1]}\left|\textrm{\normalfont d}_{\exp}^2(X+sY)\right|\le C|Y|^2e^{|X|+|Y|}
\end{equation*}
where $\textrm{\normalfont d}_{\exp}^2$ is the second order derivative of $X\to e^{X}$. Thus, under a change of basis, one obtains the desired estimiate. One can find a detailed proof in \citep{bianchini07}.
\end{proof}

\begin{lemma}\label{lem:main estimates}
For $0<\varepsilon<R<+\infty$, if  $a,b\ge 0$ and $c,d>0$ and $r\in[1,\infty]$, there exists $C:=C(r)>0$ such that for all $t\ge 1$, one has
	\begin{equation}
		\bigl\| |\cdot|^at^be^{-c|\cdot|^{d}t}\bigr\|_{L^{r}}\le Ct^{-\frac{1}{d}\frac{1}{r}+b-\frac{a}{d}}.
	\end{equation}
\end{lemma}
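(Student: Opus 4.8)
The plan is to reduce the estimate to a fixed, scale-invariant integral by exploiting the natural parabolic-type scaling $\xi\mapsto t^{-1/d}\xi$. Throughout, $a,b,c,d$ are regarded as fixed parameters, so every constant below depends on them as well as on $r$; only the dependence on $r$ is emphasized, as in the statement. Note also that the claimed bound is for the full-line norm $\|\cdot\|_{L^r(\mathbb R)}$, so the auxiliary parameters $\varepsilon,R$ in the hypothesis play no role here: the estimate for the full-line norm implies a fortiori the same estimate for any norm restricted to a frequency band $\varepsilon\le|\xi|\le R$.

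First I would treat the case $r\in[1,\infty)$. Writing $f(\xi):=|\xi|^a t^b e^{-c|\xi|^d t}$ and substituting $\eta=t^{1/d}\xi$ (so that $d\xi=t^{-1/d}d\eta$ and $|\xi|^d t=|\eta|^d$), one obtains
\[
\|f\|_{L^r}^r=t^{br}\int_{\mathbb R}|\xi|^{ar}e^{-cr|\xi|^d t}\,d\xi
= t^{\,br-\frac{ar}{d}-\frac1d}\int_{\mathbb R}|\eta|^{ar}e^{-cr|\eta|^d}\,d\eta .
\]
The remaining integral $I_{a,r,d,c}:=\int_{\mathbb R}|\eta|^{ar}e^{-cr|\eta|^d}\,d\eta$ is a finite positive constant: near the origin the integrand behaves like $|\eta|^{ar}$ with $ar\ge 0>-1$, hence is integrable, and near infinity the stretched-exponential factor $e^{-cr|\eta|^d}$ (with $c,r,d>0$) dominates any power of $|\eta|$. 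If desired it can be evaluated explicitly via $s=cr|\eta|^d$ as $I_{a,r,d,c}=\tfrac{2}{d}(cr)^{-(ar+1)/d}\,\Gamma\!\bigl(\tfrac{ar+1}{d}\bigr)$, but only its finiteness is needed. Taking $r$-th roots gives $\|f\|_{L^r}\le C\,t^{\,b-\frac ad-\frac1{dr}}$, which is precisely $C\,t^{-\frac1d\frac1r+b-\frac ad}$.

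Next, the endpoint $r=\infty$: by the same substitution,
\[
\|f\|_{L^\infty}=t^b\sup_{\xi\in\mathbb R}|\xi|^a e^{-c|\xi|^d t}
= t^{\,b-\frac ad}\,\sup_{\eta\in\mathbb R}|\eta|^a e^{-c|\eta|^d},
\]
and the last supremum is a finite constant, since $|\eta|^a e^{-c|\eta|^d}$ is continuous and tends to $0$ as $|\eta|\to\infty$ (when $a>0$ it is maximized at $|\eta|^d=a/(cd)$, and when $a=0$ it equals $1$). This agrees with the claimed exponent upon setting $1/r=0$, so the two cases together establish the lemma.

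There is essentially no genuine obstacle here: the content of the argument is the single change of variables $\eta=t^{1/d}\xi$, and the only points requiring a word of justification are the convergence of $I_{a,r,d,c}$ and the finiteness of the supremum in the endpoint case, both immediate from $a,b\ge 0$, $c,d>0$ and $r\in[1,\infty]$. One may also record, if convenient for checking sharpness of the main theorems, that the argument in fact yields matching lower bounds, so the displayed estimate is an equality up to the multiplicative constant.
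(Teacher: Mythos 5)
Your proof is correct and follows exactly the approach the paper intends by its terse ``By changing of variables'': you substitute $\eta=t^{1/d}\xi$ to pull out the power of $t$ and reduce to a $t$-independent convergent integral (resp.\ supremum for $r=\infty$). Your additional remarks --- that the $\varepsilon,R$ in the statement are vestigial, that the integral can be evaluated via a Gamma function, and that the bound is in fact sharp --- are all accurate and harmless elaborations.
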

\begin{proof}
By changing of variables.
\end{proof}

\begin{proposition}[Fundamental solution estimates]\label{prop:fundamental solution standard type}
For $0<\varepsilon<R<+\infty$, for $r\in[1,\infty]$ and $t\ge 1$, there exists positive constants $C:=C(r)$ and $\delta$ such that if the conditions {\bf A}, {\bf B}, {\bf C} and {\bf D} are satisfied, the following hold.

\noindent
1. For $|\xi|<\varepsilon$, one has
\begin{equation}\label{est:low}
	\bigl\| \hat G-\hat K\bigr\|_{L^{r}} \le Ct^{-\frac12\frac{1}{r}-\frac12},\qquad \bigl\|\hat V\bigr\|_{L^{r}}\le Ce^{-\delta t}.
\end{equation}

\noindent
2. For $\varepsilon\le |\xi|\le R$, one has
\begin{equation}\label{est:intermediate}
	\bigl\|\hat G\bigr\|_{L^{r}},\bigl\|\hat K\bigr\|_{L^{r}},\bigl\|\hat V\bigr\|_{L^{r}}\le Ce^{-\delta t}.
\end{equation}

\noindent
3. For $|\xi|>R$, one has
\begin{equation}\label{est:high 1}
	\bigl\|\hat G-\hat V\bigr\|_{L^{r}}\le Ce^{-\delta t} \quad \textrm{for} \quad r>1, \qquad \bigl\|\hat K\bigr\|_{L^{r}}\le Ce^{-\delta t}.\end{equation}
Moreover, we also have
\begin{equation}\label{est:high 2}
	\bigl\| \mathcal F^{-1}(\hat G-\hat V)\bigr\|_{L^\infty}\le Ce^{-\delta t}.
\end{equation}
\end{proposition}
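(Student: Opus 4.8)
The plan is to write $\hat G(\xi,t)=e^{E(i\xi)t}$ and, in each of the three frequency windows, split it along the complete family of mutually orthogonal total projections produced in Section~\ref{sec:asymptotic expansion}, then compare block by block with $\hat K$ and $\hat V$, using Lemma~\ref{lem:nilpotent} to replace operator exponentials by their nilpotent main parts and Lemma~\ref{lem:main estimates} to turn the resulting weighted integrals into powers of $t$; here $\varepsilon$ is small, $R$ is large, and $\delta>0$ is a suitably small constant. For $0<|\xi|<\varepsilon$, Propositions~\ref{prop:low frequency 1}--\ref{prop:low frequency 2} give $I=\sum_{j,\ell}P_{j\ell}(i\xi)+\sum_jF_j(i\xi)$ with each projection commuting with $E$, hence $\hat G(\xi,t)=\sum_{j,\ell}e^{E_{j\ell}(i\xi)t}P_{j\ell}(i\xi)+\sum_je^{E_j(i\xi)t}F_j(i\xi)$. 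On $\textrm{ran}(P_{j\ell})$, Remark~\ref{rem:low frequency 1} gives $E_{j\ell}(i\xi)=(-ic_j\xi-d_{j\ell}\xi^2)I-\xi^2N_{j\ell}^{(0)}+\mathcal O(|\xi|^3)$; pulling out the scalar $e^{(-ic_j\xi-d_{j\ell}\xi^2)t}$, applying Lemma~\ref{lem:nilpotent} with $c=\xi^2t$, $X=-N_{j\ell}^{(0)}$ and $Y=\mathcal O(|\xi|^3t)$, using $|\xi|^3t\le\varepsilon\,\xi^2t$, the expansion $P_{j\ell}(i\xi)-P_{j\ell}^{(0)}=\mathcal O(|\xi|)$ of~\eqref{eq:expansion of Pjell}, the bound $\textrm{Re}\,d_{j\ell}\ge\theta$ of~\eqref{eq:real part of djell}, and choosing first $\varepsilon'$ and then $\varepsilon$ so small that $\varepsilon'+C\varepsilon\le\theta/2$, one gets
\[
	\Bigl|e^{E_{j\ell}(i\xi)t}P_{j\ell}(i\xi)-e^{(-ic_j\xi-d_{j\ell}\xi^2)t}e^{-N_{j\ell}^{(0)}\xi^2t}P_{j\ell}^{(0)}\Bigr|\le Ce^{-\theta\xi^2t/2}\bigl(|\xi|+|\xi|^3t\bigr).
\]
Lemma~\ref{lem:main estimates} with $(a,b,d)=(1,0,2)$ and with $(a,b,d)=(3,1,2)$ --- both giving the exponent $-\tfrac1{2r}-\tfrac12$ --- then yields the first inequality of~\eqref{est:low} for this part. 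For the second sum, Remark~\ref{rem:low frequency 2} gives $E_j(i\xi)=-e_jI-M_j^{(0)}+\mathcal O(|\xi|)$ with $\textrm{Re}\,e_j>0$, so the same use of Lemma~\ref{lem:nilpotent} bounds $\bigl|e^{E_j(i\xi)t}F_j(i\xi)\bigr|$ by $Ce^{-\delta t}\le Ct^{-\frac1{2r}-\frac12}$ for $t\ge1$; and since every block of~\eqref{eq:exponential decay kernel} carries the factor $e^{-\textrm{Re}(\beta_{j\ell})t}$ with $\textrm{Re}\,\beta_{j\ell}\ge\theta$ times a polynomial-in-$t$ nilpotent exponential, $\bigl|\hat V(\xi,t)\bigr|\le Ce^{-\delta t}$ uniformly in $\xi$, which after integration over $|\xi|<\varepsilon$ gives the second inequality of~\eqref{est:low}.

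On the intermediate window $\varepsilon\le|\xi|\le R$, Proposition~\ref{prop:intermediate frequency} and compactness give $\textrm{Re}\,\nu\le-2\delta_0<0$ for all $\nu\in\sigma(E(i\xi))$; writing $e^{E(i\xi)t}=\frac1{2\pi i}\oint_\Gamma e^{zt}(zI-E(i\xi))^{-1}\,dz$ along a fixed contour $\Gamma\subset\{\textrm{Re}\,z\le-\delta_0\}$ encircling these spectra, and bounding the resolvent uniformly on $\Gamma\times\{\varepsilon\le|\xi|\le R\}$ by compactness, gives $\bigl|\hat G\bigr|\le Ce^{-\delta t}$; likewise $\bigl|\hat K\bigr|\le Ce^{-\delta t}$ (since $|\xi|\ge\varepsilon$ forces $e^{-\textrm{Re}(d_{j\ell})\xi^2t}\le e^{-\theta\varepsilon^2t}$) and $\bigl|\hat V\bigr|\le Ce^{-\delta t}$, so integration over the bounded $\xi$-set gives~\eqref{est:intermediate}. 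For $|\xi|>R$, conjugate: $\hat G=Q\,e^{\bar E(i\xi)t}Q^{-1}$, $\hat V=Q\bigl(\sum_{j,\ell}e^{(-i\alpha_j\xi-\beta_{j\ell})t}e^{-\Theta_{j\ell}^{(0)}t}\Pi_{j\ell}^{(0)}\bigr)Q^{-1}$, split $e^{\bar E(i\xi)t}=\sum_{j,\ell}e^{E_{j\ell}(i\xi)t}\Pi_{j\ell}(i\xi)$ where, by Remark~\ref{rem:high frequency}, $E_{j\ell}(i\xi)=(-i\alpha_j\xi-\beta_{j\ell})I-\Theta_{j\ell}^{(0)}+\mathcal O(|\xi|^{-1})$ on $\textrm{ran}(\Pi_{j\ell})$, and use~\eqref{eq:expansion of Pijell} and $\textrm{Re}\,\beta_{j\ell}\ge\theta$ from~\eqref{eq:real part of betajell}. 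Exactly as above, Lemma~\ref{lem:nilpotent} (with $c=t$, $X=-\Theta_{j\ell}^{(0)}$, $Y=\mathcal O(|\xi|^{-1}t)$) and $R$ large give $\bigl|\hat G-\hat V\bigr|\le Ce^{-\delta t}|\xi|^{-1}$ on $\{|\xi|>R\}$; since $|\xi|^{-1}\in L^r(\{|\xi|>R\})$ precisely for $r>1$, this is the first part of~\eqref{est:high 1}, while $\|\hat K\|_{L^r(\{|\xi|>R\})}\le Ce^{-\delta t}$ follows from the Gaussian factor, the polynomial bound on its nilpotent exponential, and Lemma~\ref{lem:main estimates}. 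For~\eqref{est:high 2}, since $Ce^{-\delta t}|\xi|^{-1}$ is not $L^1$ in $\xi$ on $\{|\xi|>R\}$, push the expansions one order further, $E_{j\ell}(i\xi)=(-i\alpha_j\xi-\beta_{j\ell})I-\Theta_{j\ell}^{(0)}+(i\xi)^{-1}W_{j\ell}+\mathcal O(|\xi|^{-2})$ and $\Pi_{j\ell}(i\xi)=\Pi_{j\ell}^{(0)}+(i\xi)^{-1}\Pi_{j\ell}^{(1)}+\mathcal O(|\xi|^{-2})$, use the quadratic inequality of Lemma~\ref{lem:nilpotent} to isolate the linear term, and write $(\hat G-\hat V)\,\mathbf 1_{\{|\xi|>R\}}$, up to conjugation by $Q^{\pm1}$, as $\sum_{j,\ell}e^{-i\alpha_j\xi t}\bigl((i\xi)^{-1}A_{j\ell}(t)+\rho_{j\ell}(\xi,t)\bigr)\mathbf 1_{\{|\xi|>R\}}$ with $|A_{j\ell}(t)|\le Ce^{-\delta t}$ and $|\rho_{j\ell}(\xi,t)|\le Ce^{-\delta t}|\xi|^{-2}$: the $\rho_{j\ell}$-part is $L^1$ in $\xi$ with norm $\le Ce^{-\delta t}$, hence has $L^\infty_x$-norm $\le Ce^{-\delta t}$ after inverse transform, and $\mathcal F^{-1}\bigl((i\xi)^{-1}\mathbf 1_{\{|\xi|>R\}}e^{-i\alpha_j\xi t}\bigr)$ is a translate of $\tfrac1{\pi}\int_R^{\infty}\xi^{-1}\sin(x\xi)\,d\xi$, which is bounded uniformly in $x$ and $R$ since $\xi^{-1}$ is odd, so the latter part has $L^\infty_x$-norm $\le Ce^{-\delta t}$ as well; this gives~\eqref{est:high 2}.

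The main obstacle is the low-frequency comparison just carried out: one must keep the nilpotent $N_{j\ell}^{(0)}$ inside the leading term of $\hat K$ so that its polynomial-in-$\xi^2t$ growth is already absorbed, control via Lemma~\ref{lem:nilpotent} the spurious factor $e^{(\varepsilon'+C\varepsilon)\xi^2t}$ against $e^{-\theta\xi^2t}$ by choosing $\varepsilon,\varepsilon'$ small, and --- decisively for the extra half-power of decay --- exploit that the operator remainder in Remark~\ref{rem:low frequency 1} is genuinely $\mathcal O(|\xi|^3)$, so that the error term $|\xi|^3t\,e^{-\theta\xi^2t/2}$ produces, through Lemma~\ref{lem:main estimates}, exactly the rate $t^{-\frac1{2r}-\frac12}$ and not merely $t^{-\frac1{2r}}$. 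The $L^\infty$ estimate~\eqref{est:high 2} is the one other delicate point, requiring one further order of the high-frequency expansion together with the oddness of $\xi^{-1}$.
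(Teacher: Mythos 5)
Your proposal is correct and follows the same overall architecture as the paper's proof: frequency splitting into low, intermediate, and high windows; block decomposition of $\hat G(\xi,t)=e^{E(i\xi)t}$ along total projections; comparison with $\hat K$ and $\hat V$ block by block using Lemma~\ref{lem:nilpotent} to control the nilpotent exponentials and Lemma~\ref{lem:main estimates} to convert the resulting weights into powers of $t$. The decisive observation you flag---that the remainder in Remark~\ref{rem:low frequency 1} is $\mathcal O(|\xi|^3)$, which after Lemma~\ref{lem:nilpotent} and Lemma~\ref{lem:main estimates} produces the extra $t^{-1/2}$---is exactly the paper's key point at low frequency, and your choice of $\varepsilon', \varepsilon$ so that $\varepsilon'+C\varepsilon\le\theta/2$ matches the paper's choice $\varepsilon'=\tfrac14\textrm{Re}\,d_{j\ell}$.

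Where you genuinely diverge is in two places, both to the good. On the intermediate window, you represent $e^{E(i\xi)t}$ by a Dunford contour integral over a fixed $\Gamma\subset\{\textrm{Re}\,z\le-\delta_0\}$ and use compactness to bound the resolvent uniformly; the paper instead asserts a direct spectral bound $|\hat G(\xi,t)|\le e^{-\theta|\xi|^2 t/(1+|\xi|^2)}$, which is only safe once the contribution of nilpotent Jordan blocks has been absorbed into the exponential margin---your contour argument makes that absorption explicit and is the more robust formulation. More substantially, for the $L^\infty$ estimate~\eqref{est:high 2} the paper splits $I$ by Taylor expansion into $I_1+I_2+I_3$, estimates $I_2,I_3$ in $L^1$ exactly as you do, and then handles $\mathcal F^{-1}(I_1)$ by a further case split $|x|\le Ct$ vs.\ $|x|>Ct$, the former via a pointwise bound on the truncated Dirichlet integral and the latter via a genuine contour deformation in the complex $\xi$-plane. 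Your proposal replaces that entire case split by the single observation that $\mathcal F^{-1}\bigl((i\xi)^{-1}\mathbf 1_{\{|\xi|>R\}}e^{-i\alpha_j\xi t}\bigr)(x)$ is, up to translation, $\tfrac1{\pi}\int_R^\infty\xi^{-1}\sin(x\xi)\,d\xi$, which is bounded uniformly in $x$ and $R$ by the convergence of the Dirichlet integral. That is shorter, sidesteps the complex-analytic deformation entirely, and avoids the slightly delicate pointwise claim $|\int_R^\infty\sin(y\xi)/\xi\,d\xi|\lesssim|y|$ that the paper invokes for the $|x|\le Ct$ regime. Both routes land on $\|\mathcal F^{-1}(\hat G-\hat V)\|_{L^\infty}\le Ce^{-\delta t}$; yours buys simplicity, while the paper's contour-deformation technique is reused later for the multiplier estimates in Proposition~\ref{prop:multiplier 2}, so there the extra machinery earns its keep.
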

\begin{proof}
For $|\xi|<\varepsilon$, by Remark \ref{rem:low frequency 1} and Remark \ref{rem:low frequency 2}, the solution $\hat G$ to the system \eqref{eq:fundamental system} is given by $\hat G=\hat G_1+\hat G_2$ where
\begin{equation}\label{eq:G1 small}
	\hat G_1(\xi,t):=\sum_{j,\ell=1}^{h,h_j}e^{(-ic_j \xi-d_{j\ell}\xi^2)t}e^{-N_{j\ell}^{(0)}\xi^2t+\mathcal O(|\xi|^3)t}\bigl(P_{j\ell}^{(0)}+\mathcal O(|\xi|)\bigr),
\end{equation}
and
\begin{equation}\label{eq:G2 small}
	\hat G_2(\xi,t):=\sum_{j=1}^ke^{-e_jt}e^{-M_j^{(0)}t+\mathcal O(|\xi|)t}\bigl(F_j^{(0)}+\mathcal O(|\xi|)\bigr).
\end{equation}
It follows that $\hat G-\hat K=\hat G_1-\hat K+\hat G_2=I_1+I_2+J$ where $J=\hat G_2$ and
\begin{equation}\label{eq:I_1 small}
	I_1:=\sum_{j,\ell=1}^{h,h_j}e^{(-ic_j \xi-d_{j\ell}\xi^2)t}\left(e^{-N_{j\ell}^{(0)}\xi^2t+\mathcal O(|\xi|^3)t}-e^{-N_{j\ell}^{(0)}\xi^2t}\right)P_{j\ell}^{(0)}
\end{equation}
and
\begin{equation}\label{eq:I_2 small}
	I_2:=\sum_{j,\ell=1}^{h,h_j}e^{(-ic_j \xi-d_{j\ell}\xi^2)t}e^{-N_{j\ell}^{(0)}\xi^2t+\mathcal O(|\xi|^3)t}\mathcal O(|\xi|).
\end{equation}

Firstly, we estimate for $I_1$ with $|\xi|<\varepsilon$ small enough by taking the matrix norm both sides of \eqref{eq:I_1 small}. Since $c_j\in\mathbb R$ for all $j\in\{1,\dots,h\}$, one has
\begin{equation*}
	\bigl| I_1\bigr|\le C\sum_{j,\ell=1}^{h,h_j}e^{-\textrm{\normalfont Re}\,(d_{j\ell})|\xi|^2t}\left|e^{-N_{j\ell}^{(0)}\xi^2t+\mathcal O(|\xi|^3)t}-e^{-N_{j\ell}^{(0)}\xi^2t}\right|.
\end{equation*}
On the other hand, from Proposition \ref{prop:low frequency 1}, $\textrm{\normalfont Re}\,(d_{j\ell})\ge \theta>0$ and $N_{j\ell}^{(0)}$ is a nilpotent matrix for all $j\in\{1,\dots,h\}$ and $\ell\in\{1,\dots,h_j\}$. Thus, by choosing $\varepsilon'=\frac14\textrm{\normalfont Re}\,(d_{j\ell})$ for each $j\in\{1,\dots,h\}$ and $\ell\in\{1,\dots,h_j\}$, from Lemma \ref{lem:nilpotent}, we have
\begin{equation*}
\begin{aligned}
	\bigl| I_1\bigr|&\le C\sum_{j,\ell=1}^{h,h_j}e^{-\textrm{\normalfont Re}\,(d_{j\ell})|\xi|^2t}e^{\frac14\textrm{\normalfont Re}\,(d_{j\ell})|\xi|^2t+C|\xi|^3t}|\xi|^3t\\
	&\le C\sum_{j,\ell=1}^{h,h_j}e^{-\textrm{\normalfont Re}\,(d_{j\ell})|\xi|^2t}e^{\frac14\textrm{\normalfont Re}\,(d_{j\ell})|\xi|^2t+C\varepsilon |\xi|^2t}|\xi|^3t\le Ce^{-\frac12\theta|\xi|^2t}|\xi|^3t.
\end{aligned}
\end{equation*}
Hence, by applying Lemma \ref{lem:main estimates}, we have
\begin{equation}\label{est:I_1 small}
	\|I_1\|_{L^{r}}\le Ct^{-\frac12\frac{1}{r}-\frac12},\quad \textrm{for }  r\in [1,\infty] \textrm{ and } t\ge 1.
\end{equation}

Similarly, we also have the estimate for $I_2$ with $|\xi|<\varepsilon$ small enough. Indeed, from \eqref{eq:I_2 small}, one has
\begin{equation*}
\begin{aligned}
	|I_2|&\le C\sum_{j,\ell=1}^{h,h_j}e^{-\textrm{\normalfont Re}\,(d_{j\ell})|\xi|^2t}e^{\bigl|N_{j\ell}^{(0)}\bigr||\xi|^2t+C|\xi|^3t}|\xi|\\
	&\le C\sum_{j,\ell=1}^{h,h_j}e^{-\textrm{\normalfont Re}\,(d_{j\ell})|\xi|^2t}e^{C\varepsilon|\xi|^2t}|\xi|\le Ce^{-\frac12\theta |\xi|^2t}|\xi|
\end{aligned}
\end{equation*}
since one can assume that $\bigl|N_{j\ell}^{(0)}\bigr|$ is small for all $j\in\{1,\dots,h\}$ and $\ell\in\{1,\dots,h_j\}$ based on the fact that they are nilpotent matrices. Hence, by applying Lemma \ref{lem:main estimates}, we have
\begin{equation}\label{est:I_2 small}
	\|I_2\|_{L^{r}}\le Ct^{-\frac12\frac{1}{r}-\frac12},\quad \textrm{for }  r\in [1,\infty] \textrm{ and } t\ge 1.
\end{equation}

We estimate for $J$. From \eqref{eq:G2 small}, we have
\begin{equation*}
	|J|\le C\sum_{j=1}^ke^{-\textrm{\normalfont Re}\,(e_j)t}e^{\bigl| M_j^{(0)}\bigr|t+C|\xi|t}(1+|\xi|).
\end{equation*}
Then, by Proposition \ref{prop:low frequency 2}, $\textrm{\normalfont Re}\,(e_j)>0$ and $M_j^{(0)}$ is a nilpotent matrix for $j\in\{1,\dots,k\}$, we can assume that $\bigl| M_j^{(0)}\bigr|$ is small, and thus, since $|\xi|<\varepsilon$ small enough, we obtain
\begin{equation}\label{est:J small}
	\|J\|_{L^r}\le C\sum_{j=1}^ke^{-\textrm{\normalfont Re}\,(e_j)t}e^{C\varepsilon t}\bigl(1+\varepsilon\bigr)\le Ce^{-\delta t}
\end{equation}
for some $\delta >0$.

Therefore,  from \eqref{est:I_1 small}, \eqref{est:I_2 small} and \eqref{est:J small}, one obtains for $|\xi|<\varepsilon$ that
\begin{equation*}
\bigl\|\hat G-\hat K\bigr\|_{L^{r}}\le \|I_1\|_{L^{r}}+\|I_2\|_{L^{r}}+\|J\|_{L^{r}}\le Ct^{-\frac12\frac{1}{r}-\frac12},\quad \textrm{for }  r\in [1,\infty] \textrm{ and } t\ge 1.
\end{equation*}

We now estimate for $\hat V$ in \eqref{eq:exponential decay kernel} with $|\xi|<\varepsilon$. Since $\alpha_j\in\mathbb R$ for all $j\in\{1,\dots,s\}$, one has
\begin{equation*}
	\bigl|\hat V(\xi,t)\bigr|\le C\sum_{j,\ell=1}^{s,s_j}e^{-\textrm{\normalfont Re}\,(\beta_{j\ell})t}e^{\bigl|\Theta_{j\ell}^{(0)}\bigr|t}.
\end{equation*}
Thus, by Proposition \ref{prop:high frequency}, since $\textrm{\normalfont Re}\,(\beta_{j\ell})\ge \theta>0$ and $\Theta_{j\ell}^{(0)}$ is a nilpotent matrix for all $j\in\{1,\dots,h\}$ and $\ell\in\{1,\dots,h_j\}$, one obtains
\begin{equation*}
	\bigl\|\hat V\bigr\|_{L^{r}}\le Ce^{-\frac12\theta t},\quad \textrm{for }  r\in [1,\infty] \textrm{ and } t\ge 1,
\end{equation*}
since we can assume that $\bigl|\Theta_{j\ell}^{(0)}\bigr|$ is small for all $j\in\{1,\dots,s\}$ and $\ell\in\{1,\dots,s_j\}$ similarly to before.

In the compact domain $\varepsilon \le |\xi|\le R$, there are the exceptional points where the eigenprojections and the nilpotent parts associated with the eigenvalues of $E(i\xi)=B+i\xi A$ in this domain may not be defined even the eigenvalues are continuous there. However, the number of these exceptional points is always finite in $\varepsilon\le |\xi|\le R$ as introduced in Proposition \ref{prop:intermediate frequency}, once integrating, for $\hat G=e^{Et}$ and for some $\delta>0$, from the condition {\bf D}, we still obtain
\begin{equation*}
	\bigl\|\hat G\bigr\|_{L^{r}}\le \left\| e^{-\frac{\theta|\cdot|^2}{1+|\cdot|^2}t}\right\|_{L^{r}}\le Ce^{-\delta t},\quad \textrm{for }  r\in [1,\infty] \textrm{ and } t\ge 1.
\end{equation*}

For $\hat K$ in \eqref{eq:parabolic kernel}, similarly to the small frequency, for $\varepsilon\le |\xi|\le R$, one has
\begin{equation}\label{est:Kintermediate}
	\bigl|\hat K(\xi,t)\bigr|\le C\sum_{j,\ell=1}^{h,h_j}e^{-\textrm{\normalfont Re}\,(d_{j\ell})|\xi|^2t}e^{\bigl|N_{j\ell}^{(0)}\bigr||\xi|^2t}\le Ce^{-\frac12\theta \varepsilon^2t}
\end{equation}
since $\textrm{\normalfont Re}\,(d_{j\ell})\ge \theta$ and $N_{j\ell}^{(0)}$ is a nilpotent matrix for all $j\in\{1,\dots,h\}$ and $\ell\in\{1,\dots,h_j\}$. Thus, for $\varepsilon\le |\xi|\le R$, one obtains
\begin{equation*}
	\bigl\|\hat K\bigr\|_{L^{r}}\le Ce^{-\frac12\theta\varepsilon^2t},\quad \textrm{for }  r\in [1,\infty] \textrm{ and } t\ge 1.
\end{equation*}

For $\hat V$ in \eqref{eq:exponential decay kernel}, similarly to the small frequency, for $\varepsilon\le |\xi|\le R$, one has
\begin{equation}\label{est:Vintermediate}
	\bigl|\hat V(\xi,t)\bigr|\le C\sum_{j,\ell=1}^{s,s_j}e^{-\textrm{\normalfont Re}\,(\beta_{j\ell})t}e^{\bigl|\Theta_{j\ell}^{(0)}\bigr|t}\le Ce^{-\frac12\theta t}
\end{equation}
since $\textrm{\normalfont Re}\,(\beta_{j\ell})\ge \theta$ and $\Theta_{j\ell}^{(0)}$ is a nilpotent matrix for all $j\in\{1,\dots,s\}$ and $\ell\in\{1,\dots,s_j\}$. Hence, for $\varepsilon\le |\xi|\le R$, we have
\begin{equation*}
	\bigl\|\hat V\bigr\|_{L^{r}}\le Ce^{-\frac12\theta t},\quad \textrm{for }  r\in [1,\infty] \textrm{ and } t\ge 1.
\end{equation*}

Finally, we study the case $|\xi|>R$. By Remark \ref{rem:high frequency},  the solution $\hat G$ to the system \eqref{eq:fundamental system} is given by
\begin{equation}\label{eq:G large}
	\hat G(\xi,t):=\sum_{j,\ell=1}^{s,s_j}e^{(-i\alpha_j\xi-\beta_{j\ell})t}e^{-\Theta_{j\ell}^{(0)}t+\mathcal O(|\xi|^{-1})t}\bigl(\Pi_{j\ell}^{(0)}+\mathcal O(|\xi|^{-1})\bigr).
\end{equation}
Then, we have $\hat G-\hat V=I+J$ where
\begin{equation}\label{eq:I large}
	I:=\sum_{j,\ell=1}^{s,s_j}e^{(-i\alpha_j\xi-\beta_{j\ell})t}\left( e^{-\Theta_{j\ell}^{(0)}t+\mathcal O(|\xi|^{-1})t}-e^{-\Theta_{j\ell}^{(0)}t}\right)\Pi_{j\ell}^{(0)}
\end{equation}
and
\begin{equation}\label{eq:J large}
	J:=\sum_{j,\ell=1}^{s,s_j}e^{(-i\alpha_j\xi-\beta_{j\ell})t}e^{-\Theta_{j\ell}^{(0)}t+\mathcal O(|\xi|^{-1})t}\mathcal O(|\xi|^{-1}).
\end{equation}

We estimate for $I$ firstly and then for $J$. Since $\alpha_j\in\mathbb R$ for all $j\in\{1,\dots,s\}$, we have
\begin{equation*}
	|I|\le C\sum_{j,\ell=1}^{s,s_j}e^{-\textrm{\normalfont Re}\,(\beta_{j\ell})t}\left| e^{-\Theta_{j\ell}^{(0)}t+\mathcal O(|\xi|^{-1})t}-e^{-\Theta_{j\ell}^{(0)}t}\right|.
\end{equation*}
On the other hand, from Proposition \ref{prop:high frequency}, $\textrm{\normalfont Re}\,(\beta_{j\ell})\ge \theta>0$ and $\Theta_{j\ell}^{(0)}$ is a nilpotent matrix for all $j\in\{1,\dots,s\}$ and $\ell\in\{1,\dots,s_j\}$. Let $\varepsilon'=\frac14\textrm{\normalfont Re}\,(\beta_{j\ell})$ and applying Lemma \ref{lem:nilpotent}, for $|\xi|>R$ large enough, we obtain
\begin{equation*}
\begin{aligned}
	|I|&\le C\sum_{j,\ell=1}^{s,s_j}e^{-\textrm{\normalfont Re}\,(\beta_{j\ell})t}e^{\frac14\textrm{\normalfont Re}\,(\beta_{j\ell})t+C|\xi|^{-1}t}|\xi|^{-1}t\\
	&\le C\sum_{j,\ell=1}^{s,s_j}e^{-\textrm{\normalfont Re}\,(\beta_{j\ell})t}e^{\frac14\textrm{\normalfont Re}\,(\beta_{j\ell})t+CR^{-1}t}|\xi|^{-1}t\le Ce^{-\frac12\theta t}|\xi|^{-1}t.
\end{aligned}
\end{equation*}
Thus, for $|\xi|>R$ large enough, one has
\begin{equation}\label{est:I large}
	\|I\|_{L^{r}}\le Ce^{-\frac14\theta t}, \quad \textrm{for }  r\in (1,\infty] \textrm{ and } t\ge 1.
\end{equation}

Similarly, we estimate for $J$ for $|\xi|>R$ large enough. From \eqref{eq:J large}, one has
\begin{equation*}
\begin{aligned}
	|J|&\le \sum_{j,\ell=1}^{s,s_j}e^{-\textrm{\normalfont Re}\,(\beta_{j\ell})t}e^{\bigl|\Theta_{j\ell}^{(0)}\bigr|t+C|\xi|^{-1}t}|\xi|^{-1}\\
	&\le \sum_{j,\ell=1}^{s,s_j}e^{-\textrm{\normalfont Re}\,(\beta_{j\ell})t}e^{CR^{-1}t}|\xi|^{-1}\le Ce^{-\frac12\theta t}|\xi|^{-1}
\end{aligned}
\end{equation*}
since one can assume that $\bigl|\Theta_{j\ell}^{(0)}\bigr|$ is small for all $j\in\{1,\dots,s\}$ and $\ell\in\{1,\dots,s_j\}$. Thus, for $|\xi|>R$ large enough, one has
\begin{equation}\label{est:J large}
	\|J\|_{L^{r}}\le Ce^{-\frac12\theta t}, \quad \textrm{for }  r\in (1,\infty] \textrm{ and } t\ge 1.
\end{equation}

Therefore, from \eqref{est:I large} and \eqref{est:J large}, there is a constant $\delta>0$ such that
\begin{equation*}
	\bigl\| \hat G-\hat V\bigr\|_{L^{r}}\le \|I\|_{L^{r}}+\|J\|_{L^{r}}\le Ce^{-\delta t},\quad \textrm{for }  r\in (1,\infty] \textrm{ and } t\ge 1.
\end{equation*}

On the other hand, we estimate for $\hat K$ in \eqref{eq:parabolic kernel} with $|\xi|>R$. We have
\begin{equation}\label{est:K large}
\begin{aligned}
	|\hat K(\xi,t)|&\le C\sum_{j,\ell=1}^{h,h_j}e^{-\textrm{\normalfont Re}\,(d_{j\ell})|\xi|^2t}e^{\bigl|N_{j\ell}^{(0)}\bigr||\xi|^2t}\\
	&\le Ce^{-\frac12\theta R^2t}\sum_{j,\ell=1}^{h,h_j}e^{-\frac12\textrm{\normalfont Re}\,(d_{j\ell})|\xi|^2t}e^{\frac14\textrm{\normalfont Re}\,(d_{j\ell})|\xi|^2t}\le Ce^{-\frac12\theta R^2t}e^{-\frac14\theta|\xi|^2t}
\end{aligned}
\end{equation}
since $\textrm{\normalfont Re}\,(d_{j\ell})\ge \theta>0$ and $N_{j\ell}^{(0)}$ that is a nilpotent matrix can be assumed to have $\bigl|N_{j\ell}^{(0)}\bigr|$ small enough for all $j\in\{1,\dots,h\}$ and $\ell\in\{1,\dots,h_j\}$ by Proposition \ref{prop:high frequency}. Thus, for $|\xi|>R$, we have
\begin{equation*}
	\|\hat K\|_{L^{r}}\le Ce^{-\frac12\theta R^2t}t^{-\frac12}\le Ce^{-\frac14\theta R^2t}, \quad \textrm{for }  r\in [1,\infty] \textrm{ and } t\ge 1. 
\end{equation*}

We now estimate the $L^{\infty}$-norm of the function $\mathcal F^{-1}(\hat G-\hat V)=\mathcal F^{-1}(I)+\mathcal F^{-1}(J)$ in $L^{\infty}$ for $|\xi|>R$ large enough where $I$ and $J$ are in \eqref{eq:I large} and \eqref{eq:J large} respectively. Primarily, from \eqref{eq:I large} and by applying the Taylor expansion to the application $X\to e^X$, we have $I=I_1+I_2+I_3$ where
\begin{equation}\label{eq:I_1 large}
	I_1:=t\sum_{j,\ell=1}^{s,s_j}\dfrac{e^{-i\alpha_j\xi t}}{i\xi} e^{-\beta_{j\ell}t}e^{-\Theta_{j\ell}^{(0)}t}M\Pi_{j\ell}^{(0)},
\end{equation}
where $M$ is the coefficient in $\mathcal O(|\xi|^{-1})$ associated with $(i\xi)^{-1}$, and
\begin{equation}\label{eq:I_2 large}
	I_2:=t\sum_{j,\ell=1}^{s,s_j}e^{(-i\alpha_j\xi -\beta_{j\ell})t}e^{-\Theta_{j\ell}^{(0)}t}\mathcal O(|\xi|^{-2})\Pi_{j\ell}^{(0)},
\end{equation}
and
\begin{equation}\label{eq:I_3 large}
	I_3:=\sum_{j,\ell=1}^{s,s_j}e^{(-i\alpha_j\xi-\beta_{j\ell})t}\left( e^{-\Theta_{j\ell}^{(0)}t+\mathcal O(|\xi|^{-1})t}-e^{-\Theta_{j\ell}^{(0)}t}-e^{-\Theta_{j\ell}^{(0)}t}\mathcal O(|\xi|^{-1})t\right)\Pi_{j\ell}^{(0)}.
\end{equation}

We first estimate for $\mathcal F^{-1}(I_1)=\sum_{j=1}^s\mathcal F_{j}^{-1}(I_1)$ where
\begin{equation*}
	\mathcal F_{j}^{-1}(I_1):=t\sum_{\ell=1}^{s_j}\mathcal F^{-1}\left(\dfrac{e^{-i\alpha_j\xi t}}{i\xi}\right) e^{-\beta_{j\ell}t}e^{-\Theta_{j\ell}^{(0)}t}M\Pi_{j\ell}^{(0)}.
\end{equation*}
 for $j\in\{1,\dots,s\}$ .
 
For each $j\in\{1,\dots,s\}$, one has
\begin{equation}\label{est:I_1 large}
\begin{aligned}
	\bigl|\mathcal F_j^{-1}(I_1)(x,t)\bigr|&\le Ct\sum_{\ell=1}^{s_j}\left|\int_{-\infty}^{-R}+\int_R^{+\infty}\dfrac{e^{i(x-\alpha_jt)\xi}}{i\xi}\,d\xi\right|e^{-\textrm{\normalfont Re}\,(\beta_{j\ell})t}e^{\bigl|\Theta_{j\ell}^{(0)}\bigr|t}\\
	&\le Ct\sum_{\ell=1}^{s_j}\left|2\int_R^{+\infty}\dfrac{\sin((x-\alpha_jt)\xi)}{\xi}\,d\xi\right|e^{-\textrm{\normalfont Re}\,(\beta_{j\ell})t}e^{\bigl|\Theta_{j\ell}^{(0)}\bigr|t}\\
	&\le Cte^{-\frac12\theta t}|x-\alpha_jt|
\end{aligned}
\end{equation}
since $\textrm{\normalfont Re}\,(\beta_{j\ell})\ge \theta>0$ and $\Theta_{j\ell}^{(0)}$ that is a nilpotent matrix with norm can be chosen small enough for all $j\in\{1,\dots,s\}$ and $\ell\in\{1,\dots,s_j\}$ by Proposition \ref{prop:high frequency}. Hence, if $|x|\le Ct$ where $C$ is a positive constant, then, for all $j\in\{1,\dots,s\}$, we have
\begin{equation*}
	\bigl\|\mathcal F_j^{-1}(I_1)\bigr\|_{L^{\infty}}\le Ct^2e^{-\frac12\theta t}\le Ce^{-\frac14\theta t}.
\end{equation*}

We now estimate for $\mathcal F_j^{-1}(I_1)$ in the case where $|x|>Ct$ and $C$ large enough for $j\in\{1,\dots,s\}$. Noting that in this case we have
\begin{equation}\label{est:alphaj=0}
	e^{x\alpha_j}\le e^{|x||\alpha_j|}\le e^{\frac{|x|^2}{t}|\alpha_j||x|^{-1}t}\le e^{\varepsilon\frac{|x|^2}{t}}
\end{equation}
where $\varepsilon$ is small enough.

One has
\begin{equation}\label{eq:I_1^a |x| large}
	\bigl|\mathcal F_j^{-1}(I_1)(x,t)\bigr|\le Ct\sum_{\ell=1}^{s_j}\left|\int_{-\infty}^{-R}+\int_R^{+\infty}\dfrac{e^{i(x-\alpha_jt)\xi}}{i\xi}\,d\xi\right|e^{-\textrm{\normalfont Re}\,(\beta_{j\ell})t}e^{\bigl|\Theta_{j\ell}^{(0)}\bigr|t}.
\end{equation}
We estimate for the integral
\begin{equation}\label{eq:H}
	H:=\int_{-\infty}^{-R}+\int_R^{+\infty}\dfrac{e^{i(x-\alpha_jt)\xi}}{i\xi}\,d\xi=\lim_{K\to +\infty}\int_{-K}^{-R}+\int_R^{K}\dfrac{e^{i(x-\alpha_jt)\xi}}{i\xi}\,d\xi=H_1+H_2.
\end{equation}
Due to the fact that the integrand is holomorphic, we can estimate $H_2$ by considering $\xi=\zeta+i\eta\in\mathbb C$ and by changing the path $\{(\zeta,0):\zeta\textrm{ from }R \textrm{ to } K\}$ to the path $\gamma:=\gamma_1\cup \gamma_2\cup\gamma_3$ in the complex plane where
\begin{equation}
	\gamma_1:=\left\{(\zeta,\eta):\zeta=R,\eta\textrm{ from }0 \textrm{ to } \frac xt\right\},
\end{equation}
\begin{equation}
	\gamma_2:=\left\{(\zeta,\eta):\zeta\textrm{ from } R \textrm{ to } K,\eta=\frac xt\right\}
\end{equation}
and
\begin{equation}
	\gamma_3:=\left\{(\zeta,\eta):\zeta=K,\eta\textrm{ from }\frac xt \textrm{ to } 0\right\}.
\end{equation}

Then, by parameterizing $\gamma_1(s)=R+i\frac{x}{t}s$ for $s\in[0,1]$, since $|x|> Ct$, we have
\begin{equation}\label{est:gamma1}
\begin{aligned}
	\left|\lim_{K\to +\infty} \int_{\gamma_1}\dfrac{e^{i(x-\alpha_jt)\xi}}{i\xi}\,d\xi\right |&=\left| \int_{0}^{1}\dfrac{e^{i(x-\alpha_jt)R+x\alpha_js-\frac{|x|^2}{t}s}}{R+i\frac{x}{t}s}\,\dfrac{x}{t}ds\right|\\
	&\le \dfrac{C}{R}\int_{0}^{1}\left(\dfrac{|x|}{t}+\dfrac{|x|^2}{t^2}\right)e^{\varepsilon\frac{|x|^2}{t}s}e^{-\frac{|x|^2}{t}s}\,ds\\
	&\le \dfrac{C}{R}\left(\dfrac{1}{|x|}+\dfrac{1}{t}\right)\bigl(1-e^{-\frac{|x|^2}{2t}}\bigr)\le \dfrac{C}{R}t^{-1}.
	\end{aligned}
\end{equation}

On the other hand, noting that
\begin{equation}
\dfrac{1}{-\eta+i\zeta}=\dfrac{1}{i\zeta}-\eta\left(\dfrac{1}{\zeta^2+\eta^2}+\dfrac{1}{i\zeta}\dfrac{\eta}{\zeta^2+\eta^2}\right).
\end{equation}
Thus, since $|x|>Ct$, we have
\begin{equation}\label{est:gamma2}
\begin{aligned}
	\left|\lim_{K\to +\infty} \int_{\gamma_2}\dfrac{e^{i(x-\alpha_jt)\xi}}{i\xi}\,d\xi\right |&=\left| \int_{R}^{+\infty}\dfrac{e^{ix\zeta-i\alpha_j\zeta t-\frac{|x|^2}{t}+x\alpha_j}}{-\frac xt+i\zeta}\,d\zeta\right|\\
	&\le e^{-\frac{|x|^2}{2t}}\left|\int_{R}^{+\infty}e^{ix\zeta}\left(\dfrac{1}{i\zeta}-\dfrac xt \left(\dfrac{1}{\zeta^2+\frac {|x|^2}{t^2}}+\dfrac{1}{i\zeta}\dfrac{\frac xt}{\zeta^2+\frac {|x|^2}{t^2}}\right)\right)\,d\zeta\right|\\
	&\le Ce^{-\frac{|x|^2}{2t}}\left(\left|\int_{R}^{+\infty}\dfrac{e^{ix\zeta}}{i\zeta}\,d\zeta\right|+\left(\dfrac{|x|}{t}+\dfrac{|x|^2}{t^2}\right)\int_{R}^{+\infty}\dfrac{1}{\zeta^2}\,d\zeta\right)\\
	&\le Ce^{-\frac{|x|^2}{2t}}\dfrac{|x|^2}{2t}\left(\dfrac{t}{|x|}+\dfrac{1}{|x|}+\dfrac{1}{t}\right)\le Ce^{-\delta t}.
\end{aligned}
\end{equation}

Similarly, we consider $\gamma_3(s)=K+i\frac{x}{t}(1-s)$ for $s\in[0,1]$, we have
\begin{equation}\label{est:gamma31}
	\left|\lim_{K\to +\infty} \int_{\gamma_3}\dfrac{e^{i(x-\alpha_jt)\xi}}{i\xi}\,d\xi\right |= \left|\lim_{K\to +\infty} \int_{0}^{1}\dfrac{e^{i(x-\alpha_jt)K+x\alpha_j(1-s)-\frac{|x|^2}{t}(1-s)}}{K+i\frac{x}{t}(1-s)}\,\dfrac{x}{t}ds\right|.
\end{equation}
On the other hand, noting that for a fixed $K$, we have
\begin{equation}\label{est:gamma32}
\begin{aligned}
	\left| \int_{0}^{1}\dfrac{e^{i(x-\alpha_jt)K+x\alpha_j(1-s)-\frac{|x|^2}{t}(1-s)}}{K+i\frac{x}{t}(1-s)}\,\dfrac{x}{t}ds\right|&\le \dfrac{C}{K}\int_{0}^{1}\left(\dfrac{|x|}{t}+\dfrac{|x|^2}{t^2}\right)e^{-\frac{|x|^2}{2t}(1-s)}\,ds\\
	&=\dfrac{C}{K}\left(\dfrac{1}{|x|}+\dfrac{1}{t}\right)e^{-\frac{|x|^2}{2t}}\bigl(e^{\frac{|x|^2}{2t}}-1\bigr)\le \dfrac{C}{K}t^{-1}.
\end{aligned}
\end{equation}
One deduces that
\begin{equation}\label{est:gamma33}
	\lim_{K\to +\infty} \int_{0}^{1}\dfrac{e^{i(x-\alpha_jt)K+x\alpha_j(1-s)-\frac{|x|^2}{t}(1-s)}}{K+i\frac{x}{t}(1-s)}\,\dfrac{x}{t}ds=0.
\end{equation}
Hence, it implies that
\begin{equation}\label{est:gamma34}
	\left|\lim_{K\to +\infty} \int_{\gamma_3}\dfrac{e^{i(x-\alpha_jt)\xi}}{i\xi}\,d\xi\right |=0.
\end{equation}

Finally, one can estimate $H_1$ similarly by substituting $R$ and $K$ by $-R$ and $-K$ respectively. Therefore, from \eqref{eq:I_1^a |x| large}, \eqref{eq:H}, \eqref{est:gamma1}, \eqref{est:gamma2} and \eqref{est:gamma34}, one obtains
\begin{equation*}
	\bigl\| \mathcal F_j^{-1}(I_1)\bigr\|_{L^{\infty}}\le Ce^{-\frac12\theta t}
\end{equation*}
for $|x|>Ct$ where $C$ large enough since $\textrm{\normalfont Re}\,(\beta_{j\ell})\ge \theta>0$ and $\Theta_{j\ell}^{(0)}$ is a nilpotent matrix for all $j\in\{1,\dots,s\}$ and $\ell\in\{1,\dots,s_j\}$ by Proposition \ref{prop:high frequency}.

Therefore, it implies that\begin{equation*}
	\bigl\| \mathcal F^{-1}(I_1)\bigr\|_{L^{\infty}}\le C\sum_{j=1}^s\bigl\| \mathcal F_j^{-1}(I_1)\bigr\|_{L^{\infty}} \le Ce^{-\frac12\theta t}.
\end{equation*}

We estimate for $\mathcal F^{-1}(I_2)$ and $\mathcal F^{-1}(I_3)$ where $I_2$ and $I_3$ are in \eqref{eq:I_2 large} and \eqref{eq:I_3 large} respectively. Since $\mathcal F^{-1}:L^1\to L^\infty$, one has
\begin{equation}
	\bigl\| \mathcal F^{-1}(I_{2,3})\bigr\|_{L^{\infty}}\le C\bigl\| I_{2,3}\bigr\|_{L^1}.
\end{equation}
Hence, we only need to estimate $I_2$ and $I_3$ in $L^1$. 

From \eqref{eq:I_2 large}, we have
\begin{equation*}
	|I_2|\le C\sum_{j,\ell=1}^{s,s_j}e^{-\textrm{\normalfont Re}\,(\beta_{j\ell})t}e^{\bigl|\Theta_{j\ell}^{(0)}\bigr|t}|\xi|^{-2}t.
\end{equation*}
Thus, we obtain
\begin{equation*}
	\bigl\|I_2\bigr\|_{L^1}\le Ce^{-\frac12\theta t}, \qquad\textrm{for }t\ge 1.
\end{equation*}

From \eqref{eq:I_3 large}, we have
\begin{equation*}
	|I_3|\le \sum_{j,\ell=1}^{s,s_j}e^{-\textrm{\normalfont Re}\,(\beta_{j\ell})t}\left| e^{-\Theta_{j\ell}^{(0)}t+\mathcal O(|\xi|^{-1})t}-e^{-\Theta_{j\ell}^{(0)}t}-e^{-\Theta_{j\ell}^{(0)}t}\mathcal O(|\xi|^{-1})t\right|.
\end{equation*}
Then, by Lemma \ref{lem:nilpotent}, we obtain
\begin{equation*}
	|I_3|\le \sum_{j,\ell=1}^{s,s_j}e^{-\textrm{\normalfont Re}\,(\beta_{j\ell})t}e^{\varepsilon' t+C|\xi|^{-1}t}|\xi|^{-2}t^2,
\end{equation*}
where $\varepsilon'$ is small enough. Therefore, since $|\xi|$ large enough, we have
\begin{equation*}
	\bigl\|I_3\bigr\|_{L^1}\le Ce^{-\frac12\theta t},\qquad \textrm{for }t\ge 1.
\end{equation*}

Thus, we deduces
\begin{equation*}
	\bigl\| \mathcal F^{-1}(I)\bigr\|_{L^{\infty}}\le \bigl\| \mathcal F^{-1}(I_1)\bigr\|_{L^{\infty}}+\bigl\| \mathcal F^{-1}(I_2)\bigr\|_{L^{\infty}}+\bigl\| \mathcal F^{-1}(I_3)\bigr\|_{L^{\infty}}\le  Ce^{-\frac12\theta t}, \quad \textrm{for }t\ge 1.
\end{equation*}

We now estimate $\mathcal F^{-1}(J)$ where $J$ is given by \eqref{eq:J large}. From \eqref{eq:J large}, one has $J=J_1+J_2$ where
\begin{equation}
	J_1:=\sum_{j,\ell=1}^{s,s_j}\dfrac{e^{-i\alpha_j\xi t}}{i\xi}e^{-\beta_{j\ell}t}e^{-\Theta_{j\ell}^{(0)}t+\mathcal O(|\xi|^{-1})t}M,
\end{equation}
where $M$ is the coefficient associated with the term $(i\xi)^{-1}$ in $\mathcal O(|\xi|^{-1})$, and
\begin{equation*}
	J_2:=\sum_{j,\ell=1}^{s,s_j}e^{(-i\alpha_j\xi-\beta_{j\ell})t}e^{-\Theta_{j\ell}^{(0)}t+\mathcal O(|\xi|^{-1})t}\mathcal O(|\xi|^{-2}).
\end{equation*}
Then, we can estimate $\mathcal F^{-1}(J_1)$ as the case of $\mathcal F^{-1}(I_1)$ and we can estimate $\mathcal F^{-1}(J_2)$ as the case of $\mathcal F^{-1}(I_2)$ and $\mathcal F^{-1}(I_3)$. Thus, we deduces
\begin{equation*}
	\bigl\| \mathcal F^{-1}(J)\bigr\|_{L^{\infty}}\le \bigl\| \mathcal F^{-1}(J_1)\bigr\|_{L^{\infty}}+\bigl\| \mathcal F^{-1}(J_2)\bigr\|_{L^{\infty}}\le Ce^{-\frac12\theta t}, \qquad \textrm{for }t\ge 1.
\end{equation*}

Therefore, we conclude
\begin{equation*}
	\bigl\| \mathcal F^{-1}(\hat G-\hat V)\bigr\|_{L^{\infty}}\le \bigl\| \mathcal F^{-1}(I)\bigr\|_{L^{\infty}}+\bigl\| \mathcal F^{-1}(J)\bigr\|_{L^{\infty}} \le Ce^{-\frac12\theta t}
\end{equation*}
for $t\ge 1$ and the proof is done.
\end{proof}

\section{Multiplier estimates}\label{sec:multiplier estimates}
This section provides some useful Fourier multiplier estimates by recalling the Young inequality.
\begin{lemma}[Young's inequality]
For all $(p,q,r)\in[1,\infty]^3$ such that $\frac1q-\frac1p=1-\frac1r$ and $(f,g)\in L^r\times L^q$, we have $f*g\in L^p$
and $\|f*g\|_{L^p}\le\|f\|_{L^r}\|g\|_{L^q}$.
\end{lemma}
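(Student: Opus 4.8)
The plan is to derive Young's inequality from Hölder's inequality (in its three‑function form) together with Tonelli's theorem, after first disposing of the endpoint exponents. If $p=1$, the constraint $\frac1q-\frac1p=1-\frac1r$ forces $q=r=1$, and $\|f*g\|_{L^1}\le\|f\|_{L^1}\|g\|_{L^1}$ is immediate from Tonelli. If $p=\infty$, then $\frac1r+\frac1q=1$, so $g\in L^{r'}$, and for every $x$ Hölder gives $|(f*g)(x)|\le\|f(x-\cdot)\|_{L^r}\|g\|_{L^{r'}}=\|f\|_{L^r}\|g\|_{L^{r'}}$. Hence from now on I would assume $1<p<\infty$, which forces $1\le r\le p<\infty$ and $1\le q\le p<\infty$.

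For the main case I would start from the pointwise splitting
\[
|f(x-y)|\,|g(y)|=\bigl(|f(x-y)|^{r}|g(y)|^{q}\bigr)^{1/p}\,|f(x-y)|^{1-r/p}\,|g(y)|^{1-q/p}
\]
and apply Hölder with the three exponents $p$, $s:=\tfrac{rp}{p-r}$ and $t:=\tfrac{qp}{p-q}$ (with the convention $s=\infty$ if $r=p$ and $t=\infty$ if $q=p$). The crucial observation is that the hypothesis $\frac1q-\frac1p=1-\frac1r$, equivalently $\frac1r+\frac1q=1+\frac1p$, is precisely $\frac1p+\frac1s+\frac1t=1$, so Hölder applies and yields
\[
(|f|*|g|)(x)\le\Bigl(\int|f(x-y)|^{r}|g(y)|^{q}\,dy\Bigr)^{1/p}\,\|f\|_{L^r}^{1-r/p}\,\|g\|_{L^q}^{1-q/p}.
\]
Raising this to the $p$‑th power, integrating in $x$, and evaluating $\iint|f(x-y)|^{r}|g(y)|^{q}\,dy\,dx=\|f\|_{L^r}^{r}\|g\|_{L^q}^{q}$ by Tonelli, I obtain $\||f|*|g|\|_{L^p}^{p}\le\|f\|_{L^r}^{p}\|g\|_{L^q}^{p}$; since $|(f*g)(x)|\le(|f|*|g|)(x)$ wherever the right‑hand side is finite — which, by the computation just made, is almost every $x$ — taking $p$‑th roots gives both $f*g\in L^p$ and the asserted bound.

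Since the statement is classical there is no genuine obstacle; the only point needing a moment's care is the degeneration $r=p$ or $q=p$, where one of $s,t$ becomes $\infty$, which is harmless because then the paired exponent $1-r/p$ (resp.\ $1-q/p$) vanishes and the corresponding Hölder factor is the $L^\infty$ norm of a constant. If one prefers to avoid the three‑exponent Hölder altogether, an equally quick alternative is to check the two extreme cases — $r=1$ via Minkowski's integral inequality and $p=\infty$ via Hölder — and conclude by the Riesz--Thorin interpolation theorem.
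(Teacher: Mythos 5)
Your proof is correct. The paper simply states Young's convolution inequality as a classical fact and gives no proof at all, so there is no argument of the paper's to compare against. Your derivation — splitting $|f(x-y)||g(y)|$ into the three factors $(|f(x-y)|^r|g(y)|^q)^{1/p}$, $|f(x-y)|^{1-r/p}$, $|g(y)|^{1-q/p}$, applying the three-exponent H\"older inequality with the conjugate system $\tfrac1p+\tfrac1s+\tfrac1t=1$ (which is exactly the hypothesis $\tfrac1r+\tfrac1q=1+\tfrac1p$), and closing with Tonelli — is the standard textbook proof, and you handle the endpoint and degenerate cases ($p=1$, $p=\infty$, $r=p$, $q=p$) cleanly. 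The alternative you mention via Minkowski's integral inequality, H\"older, and Riesz--Thorin is equally valid and would also fill the gap the paper leaves.
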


We thus obtain the follows.
\subsection{Case $|x|\le Ct$}
Let $\chi_1$ and $\chi_3$ be cutoff functions defined on $[-\varepsilon,\varepsilon]$ and $(-\infty,-R]\cup[R,+\infty)$ respectively for $\varepsilon$ small and $R$ large such that $|\chi_{1,2}|\le 1$. Let $\chi_2:=1-\chi_1-\chi_3$, we introduce the multipliers
\begin{equation*}
	m_j:=\chi_j(\hat G-\hat K-\hat V),\qquad j=1,2,3.
\end{equation*}
The following holds.
\begin{proposition}\label{prop:multiplier 1}
For $r\in[1,\infty]$, $m_j\in M_r$ with
\begin{equation}
	\|m_j\|_{M_r}\le Ct^{-\frac12},\qquad j=1,2,3\textrm{ and }t\ge 1.
\end{equation}
\begin{proof}
We begin with $m_1$. For $|\xi|\le \varepsilon$, we have $\hat G-\hat K=I_1+I_2+J$ where $I_1,I_2$ are in \eqref{eq:I_1 small}, \eqref{eq:I_2 small} respectively and $J=\hat G_2$ as in \eqref{eq:G2 small}. We then have
\begin{equation*}
	\mathcal F^{-1}(\chi_1I_1)(x,t)=\sum_{j,\ell=1}^{h,h_j}\int_{-\varepsilon}^{\varepsilon}\chi_1(\xi)e^{i(x-c_jt)\xi-d_{j\ell}\xi^2t}\left(e^{-N_{j\ell}^{(0)}\xi^2t+\mathcal O(|\xi|^3)t}-e^{-N_{j\ell}^{(0)}\xi^2t}\right)P_{j\ell}^{(0)} d\xi.
\end{equation*}
For $j\in\{1,\dots,h\}$ and $\ell\in\{1,\dots,h_j\}$, let $z=e^{i\phi/2}\xi$ where $\phi=\textrm{\normalfont arg}\,(d_{j\ell})\in (-\pi/2,\pi/2)$ since $\textrm{\normalfont Re}\,(d_{j\ell})>0$, one obtains
\begin{align*}
	\mathcal F^{-1}(\chi_1I_1)(x,t)&=\sum_{j,\ell=1}^{h,h_j}\int_{\gamma}\chi_1(e^{-i\phi/2}z)e^{i(x-c_jt)e^{-i\phi/2}z-|d_{j\ell}|z^2t}\\
	&\hskip2cm\cdot\left(e^{-N_{j\ell}^{(0)}e^{-i\phi}z^2t+\mathcal O(|e^{-i\phi/2}z|^3)t}-e^{-N_{j\ell}^{(0)}e^{-i\phi}z^2t}\right)P_{j\ell}^{(0)} e^{-i\phi/2}dz,
\end{align*}
where $\gamma:=\{z\in\mathbb C:z=e^{i\phi/2}\xi,\xi\in[-\varepsilon,\varepsilon]\}$. Then, we will estimate for each summand by letting $\eta:=\min\bigl\{\frac{|x-c_jt|}{2|d_{j\ell}|t},\frac{\varepsilon}{2}\bigr\}$. Since the integrand is holomorphic, we can change the path of the integral from $\gamma$ to $\gamma:=\gamma_1\cup \gamma_2\cup\gamma_3$ in the complex plane where
\begin{equation}\label{gamma1}
	\gamma_1:=\left\{-\varepsilon e^{i\phi/2}+i\textrm{\normalfont sgn}(x-c_jt)\eta e^{-i\phi/2}s:s\in[0,1]\right\},
\end{equation}
\begin{equation}\label{gamma2}
	\gamma_2:=\left\{ \zeta e^{i\phi/2}+i\textrm{\normalfont sgn}(x-c_jt)\eta e^{-i\phi/2}:\zeta\in[-\varepsilon,\varepsilon]\right\}
\end{equation}
and
\begin{equation}\label{gamma3}
	\gamma_3:=\left\{\varepsilon e^{i\phi/2}+i\textrm{\normalfont sgn}(x-c_jt)\eta e^{-i\phi/2}(1-s):s\in[0,1]\right\}.
\end{equation}

On the other hand, we have
\begin{equation}
	\left|e^{i(x-c_jt)e^{-i\phi/2}z-|d_{j\ell}|z^2t}\right|=e^{-(x-c_jt)\bigl(\cos(\phi/2)\textrm{\normalfont Im}\,z-\sin(\phi/2)\textrm{\normalfont Re}\,z\bigr)}e^{-|d_{j\ell}|(\textrm{\normalfont Re}\,z-\textrm{\normalfont Im}\,z)(\textrm{\normalfont Re}\,z+\textrm{\normalfont Im}\,z)t}.
\end{equation}
Moreover, $|\chi_1|\le 1$ and similarly to before, by Lemma \ref{lem:nilpotent}, since $N_{j\ell}^{(0)}$ is nilpotent and since $|z|=|\xi|\le \varepsilon$ small, we have
\begin{equation}
\begin{aligned}
	\left|e^{-N_{j\ell}^{(0)}e^{-i\phi}z^2t+\mathcal O(|e^{-i\phi/2}z|^3)t}-e^{-N_{j\ell}^{(0)}e^{-i\phi}z^2t}\right|&\le C|z|^3te^{\varepsilon'|z|^2t+C|z|^3t}\\
	&\le C(|\textrm{\normalfont Re}\,z|+|\textrm{\normalfont Im}\,z|)^3te^{\varepsilon''(|\textrm{\normalfont Re}\,z|+|\textrm{\normalfont Im}\,z|)^2t},
\end{aligned}
\end{equation}
where $\varepsilon',\varepsilon''$ can be chosen as small as one needs.

Thus, for $z\in\gamma_1$, we have
\begin{equation*}
\begin{aligned}
	\textrm{\normalfont Re}\,z&=-\varepsilon\cos(\phi/2)+\textrm{\normalfont sgn}(x-c_jt)\eta\sin(\phi/2)s,\\
	\textrm{\normalfont Im}\,z&=-\varepsilon \sin(\phi/2) +\textrm{\normalfont sgn}(x-c_jt)\eta\cos(\phi/2)s.
\end{aligned}
\end{equation*}
We then obtain from $\cos(\phi)>0$, $\eta^2s^2\le \varepsilon^2/2$ for $s\in[0,1]$ and $|z|\le \varepsilon$ that for some $\delta>0$, one has
\begin{equation}\label{est:gamma1small}
	\left|\int_{\gamma_1}\right|\le C\int_0^1e^{-|x-c_jt|\eta \cos (\phi)s}e^{-|d_{j\ell}|\cos(\phi)(\varepsilon^2-\eta^2s^2)t}e^{\varepsilon''\varepsilon^2 t}\varepsilon^3tds\le  Ce^{-\delta t}.
\end{equation}

For $z\in \gamma_2$, we have
\begin{equation*}
\begin{aligned}
	\textrm{\normalfont Re}\,z&=\zeta\cos(\phi/2)+\textrm{\normalfont sgn}(x-c_jt)\eta\sin(\phi/2),\\
	\textrm{\normalfont Im}\,z&=\zeta \sin(\phi/2) +\textrm{\normalfont sgn}(x-c_jt)\eta\cos(\phi/2).
\end{aligned}
\end{equation*}
Hence, one has
\begin{equation}
	\left|\int_{\gamma_2}\right|\le C\int_{-\varepsilon}^{\varepsilon}e^{-|x-c_jt|\eta \cos (\phi)}e^{-|d_{j\ell}|\cos(\phi)(\zeta^2-\eta^2)t}e^{\varepsilon''(\zeta^2+2|\zeta||\eta|+|\eta|^2) t}(|\zeta|+|\eta|)^3td\zeta.
\end{equation}
If $\eta=\frac{|x-c_jt|}{2|d_{j\ell}|t}$, then since $|\zeta|\le \varepsilon$ small and $\varepsilon''$ small enough, for some $c>0$, we have
\begin{equation}\label{est:gamma2asmall}
	\begin{aligned}
		\left|\int_{\gamma_2}\right|&\le C\int_{-\varepsilon}^{\varepsilon}e^{-\frac{|x-c_jt|^2}{|d_{j\ell}|t} \cos (\phi)}e^{\frac{|x-c_jt|^2}{2|d_{j\ell}|t}\cos(\phi)}e^{-|d_{j\ell}|\cos(\phi)\zeta^2t}e^{\varepsilon''\zeta^2t+\varepsilon''|\zeta|\frac{|x-c_jt|}{|d_{j\ell}|t}+\varepsilon''\frac{|x-c_jt|^2}{4|d_{j\ell}|^2t}}\\
		&\hskip3cm\cdot \left(|\zeta|^3t+3|\zeta|^2\dfrac{|x-c_jt|}{2|d_{j\ell}|}+3|\zeta|\dfrac{|x-c_jt|^2}{4|d_{j\ell}|^2t}+\dfrac{|x-c_jt|^3}{8|d_{j\ell}|^3t^2}\right)d\zeta\\
		&\le C \sum_{k=0}^3e^{-\frac{|x-c_jt|^2}{8|d_{j\ell}|t} \cos (\phi)}\left(\dfrac{|x-c_jt|}{\sqrt{t}}\right)^{k}\int_{-\varepsilon}^{\varepsilon}e^{-\frac12|d_{j\ell}|\cos(\phi)\zeta^2t}|\zeta|^{3-k}t^{1-\frac k2}d\zeta\\
		&\le Ct^{-1}e^{-\frac{|x-c_jt|^2}{c|d_{j\ell}|t}}.
	\end{aligned}
\end{equation}
If $\eta=\varepsilon/2$, then $|x-c_jt|\ge \varepsilon|d_{j\ell}|t$ by the definition of $\eta$ and we have
\begin{equation}\label{est:gamma2bsmall}
\begin{aligned}
	\left|\int_{\gamma_2}\right|&\le C\int_{-\varepsilon}^{\varepsilon}e^{-|x-c_jt|\eta \cos (\phi)}e^{-|d_{j\ell}|\cos(\phi)(\zeta^2-\eta^2)t}e^{\varepsilon''(\zeta^2+2|\zeta||\eta|+|\eta|^2) t}(|\zeta|+|\eta|)^3td\zeta\\
	&\le Ce^{-\varepsilon^2|d_{j\ell}|\cos (\phi)t}e^{\frac14\varepsilon^2|d_{j\ell}|\cos(\phi)t}e^{\varepsilon''\varepsilon^2 t}\int_{-\varepsilon}^{\varepsilon}e^{-|d_{j\ell}|\cos(\phi)\zeta^2t}\left(|\zeta|+\dfrac{\varepsilon}{2}\right)^3td\zeta\le Ce^{-\delta t},
\end{aligned}
\end{equation}
for some $\delta>0$ since $\varepsilon''$ can be chosen small enough.

For $z\in\gamma_3$, we have
\begin{equation*}
\begin{aligned}
	\textrm{\normalfont Re}\,z&=\varepsilon\cos(\phi/2)+\textrm{\normalfont sgn}(x-c_jt)\eta\sin(\phi/2)(1-s),\\
	\textrm{\normalfont Im}\,z&=\varepsilon \sin(\phi/2) +\textrm{\normalfont sgn}(x-c_jt)\eta\cos(\phi/2)(1-s).
\end{aligned}
\end{equation*}
Thus, similarly to $\gamma_1$, for some $\delta>0$, one has
\begin{equation}\label{est:gamma3small}
	\left|\int_{\gamma_3}\right|\le C\int_0^1e^{-|x-c_jt|\eta \cos (\phi)(1-s)}e^{-|d_{j\ell}|\cos(\phi)(\varepsilon^2-\eta^2(1-s)^2)t}e^{\varepsilon''\varepsilon^2 t}\varepsilon^3tds\le  Ce^{-\delta t}.
\end{equation}

Therefore, from \eqref{est:gamma1small}, \eqref{est:gamma2asmall}, \eqref{est:gamma2bsmall}, \eqref{est:gamma3small} and the fact that $e^{-\delta t}\le Ct^{-1}e^{-\frac{|x-c_jt|^2}{c|d_{j\ell}|t}}$ since $|x|\le Ct$ and $t\ge 1$, we obtain
\begin{equation*}
	\left|\mathcal F^{-1}(\chi_1I_1)(x,t)\right|\le Ct^{-1}e^{-\frac{|x-c_jt|^2}{c|d_{j\ell}|t}},\qquad t\ge 1.
\end{equation*}
By the same way for $\mathcal F^{-1}(\chi_1I_2)$ and $\mathcal F^{-1}(\chi_1J)$, we also have
\begin{equation*}
	\left|\mathcal F^{-1}(\chi_1(I_2+J))(x,t)\right|\le Ct^{-1}e^{-\frac{|x-c_jt|^2}{c|d_{j\ell}|t}},\qquad t\ge 1.
\end{equation*}
Hence, for $r\in[1,\infty]$, by the Young inequality and since $m_1=\chi_1(I_1+I_2+J)$, it follows that
\begin{equation*}
	\|m_1\|_{M_r}=\sup_{\|f\|_{L^r}=1}\bigl\|\mathcal F^{-1}(m_1)*f\bigr\|_{L^r}\le \bigl\|\mathcal F^{-1}(m_1)\bigr\|_{L^1}\le Ct^{-\frac12},\qquad t\ge 1.
\end{equation*}

We consider $m_2$. Since $|\chi_2(\xi)|,|e^{ix\xi }|\le 1$ for $\xi\in \mathbb R$, we have
\begin{equation}
	\bigl|\mathcal F^{-1}(m_2)(x,t)\bigr|\le \int_{\varepsilon\le |\xi|\le R}\bigl(|\hat G(\xi,t)|+|\hat K(\xi,t)|+|\hat V(\xi,t)|\bigr)d\xi\le Ce^{-\delta t}
\end{equation}
for some $\delta>0$ due to \eqref{est:Kintermediate}, \eqref{est:Vintermediate} and the fact that $|\hat G(\xi,t)|\le e^{-\frac{\theta|\xi|^2}{1+|\xi|^2}t}$ on $\varepsilon\le |\xi|\le R$ for $\theta>0$.

Thus, one has
\begin{equation*}
	\left|\mathcal F^{-1}(m_2)(x,t)\right|\le Ct^{-1}e^{-\frac{|x-c_jt|^2}{c|d_{j\ell}|t}},\qquad t\ge 1.
\end{equation*}

Hence, for $r\in[1,\infty]$, by the Young inequality, it follows that
\begin{equation*}
	\|m_2\|_{M_r}=\sup_{\|f\|_{L^r}=1}\bigl\|\mathcal F^{-1}(m_2)*f\bigr\|_{L^r}\le \bigl\|\mathcal F^{-1}(m_2)\bigr\|_{L^1}\le Ct^{-\frac12},\qquad t\ge 1.
\end{equation*}

Finally, we consider $m_3$. Based on the decomposition $\hat G-\hat V=I+J$ where $I$ is defined as $I_1$ in \eqref{eq:I_1 large} and $J$ is the remainder, from \eqref{est:I_1 large} and a same treatment for $J$, we obtain for $|x|\le Ct$ that
\begin{equation*}
	\left|\mathcal F^{-1}(\chi_3(\hat G-\hat V))(x,t)\right|\le C\sum_{j=1}^ste^{-\delta t}|x-\alpha_jt|+Ce^{-\delta t}\le Ct^{-1}e^{-\frac{|x-c_jt|^2}{c|d_{j\ell}|t}},\qquad t\ge 1
\end{equation*}
for some $\delta>0$.

Moreover, from \eqref{est:K large}, there is a $\theta>0$ such that
\begin{equation*}
	\left|\mathcal F^{-1}(\chi_3\hat K)(x,t)\right|\le e^{-\frac12\theta R^2t}\int_{|\xi|\ge R}e^{-\frac14\theta |\xi|^2t}d\xi\le  Ce^{-\delta t}\le Ct^{-1}e^{-\frac{|x-c_jt|^2}{c|d_{j\ell}|t}},\qquad t\ge 1
\end{equation*}
for some $\delta>0$.

Hence, for $r\in[1,\infty]$, by the Young inequality and $m_3=\chi_3(\hat G-\hat V-\hat K)$, it follows that
\begin{equation*}
	\|m_3\|_{M_r}=\sup_{\|f\|_{L^r}=1}\bigl\|\mathcal F^{-1}(m_3)*f\bigr\|_{L^r}\le \bigl\|\mathcal F^{-1}(m_3)\bigr\|_{L^1}\le Ct^{-\frac12},\qquad t\ge 1.
\end{equation*}
We finish the proof.
\end{proof}
\end{proposition}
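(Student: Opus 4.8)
The plan is to reduce, via the Young inequality in the form $\|\mathcal F^{-1}(m_j)*f\|_{L^r}\le\|\mathcal F^{-1}(m_j)\|_{L^1}\|f\|_{L^r}$ (which yields $\|m_j\|_{M_r}\le\|\mathcal F^{-1}(m_j)\|_{L^1}$ with a bound independent of $r\in[1,\infty]$), to establishing the pointwise kernel estimate
\begin{equation*}
	\bigl|\mathcal F^{-1}(m_j)(x,t)\bigr|\le Ct^{-1}e^{-|x-c_jt|^2/(ct)},\qquad t\ge1,
\end{equation*}
on the range $|x|\le Ct$ under consideration; integrating this Gaussian in $x$ produces exactly the factor $t^{-1/2}$. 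I would organize the argument along the three frequency regimes already isolated in Proposition \ref{prop:fundamental solution standard type}: $m_1$ is supported in $|\xi|\le\varepsilon$, $m_2$ in $\varepsilon\le|\xi|\le R$, and $m_3$ in $|\xi|\ge R$.

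For $m_1$ I would use the low-frequency decomposition $\hat G-\hat K=I_1+I_2+J$ of \eqref{eq:I_1 small}--\eqref{eq:G2 small} together with the bound $\|\hat V\|_{L^r}\le Ce^{-\delta t}$ from \eqref{est:low}, so that only $\mathcal F^{-1}(\chi_1 I_1)$, $\mathcal F^{-1}(\chi_1 I_2)$ and $\mathcal F^{-1}(\chi_1 J)$ need to be estimated. In each summand the scalar factor $e^{i(x-c_j t)\xi-d_{j\ell}\xi^2 t}$ is entire, so the strategy is a contour deformation: first rotate by $z=e^{i\phi/2}\xi$ with $\phi=\textrm{\normalfont arg}\,(d_{j\ell})\in(-\pi/2,\pi/2)$ so that the diffusive factor becomes $e^{-|d_{j\ell}|z^2 t}$, then translate the path imaginarily by $\eta=\min\bigl\{|x-c_j t|/(2|d_{j\ell}|t),\varepsilon/2\bigr\}$ along $\gamma_1\cup\gamma_2\cup\gamma_3$ (two short vertical segments at $\pm\varepsilon e^{i\phi/2}$ and one horizontal segment at height $\eta$). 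The cutoff $\chi_1$ is only Lipschitz, but since $\chi_1\equiv1$ near $0$ it enters only as the bounded weight $\chi_1(e^{-i\phi/2}z)$, and on the vertical pieces $|z|\sim\varepsilon$, forcing an $O(e^{-\delta t})$ contribution; on the horizontal piece the oscillatory factor supplies the Gaussian $e^{-|x-c_j t|^2/(ct)}$, while the remainders in $e^{-N_{j\ell}^{(0)}\xi^2 t+\mathcal O(|\xi|^3)t}-e^{-N_{j\ell}^{(0)}\xi^2 t}$ are absorbed by Lemma \ref{lem:nilpotent} after taking $\varepsilon$ (and the free parameter $\varepsilon'$) small. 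Expanding $(|\zeta|+\eta)^3$ and matching powers of $|x-c_j t|/\sqrt t$ against $\int e^{-\frac12|d_{j\ell}|\cos\phi\,\zeta^2 t}|\zeta|^{3-k}t^{1-k/2}\,d\zeta$ produces the $t^{-1}$ prefactor. The terms $\chi_1 I_2$ and $\chi_1 J=\chi_1\hat G_2$ are handled the same way, the latter being already exponentially decaying in $t$.

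For $m_2$ the support $\varepsilon\le|\xi|\le R$ is compact, and by part 2 of Proposition \ref{prop:fundamental solution standard type} together with \eqref{est:Kintermediate}--\eqref{est:Vintermediate} all of $\hat G,\hat K,\hat V$ are $O(e^{-\delta t})$ there, so $|\mathcal F^{-1}(m_2)(x,t)|\le Ce^{-\delta t}$ pointwise, which on $|x|\le Ct$ is dominated by the target Gaussian bound. For $m_3$ I would use $\hat G-\hat V=I+J$ from \eqref{eq:I large}--\eqref{eq:J large}: the piece $\chi_3\hat K$ is exponentially small by \eqref{est:K large}, and for $\chi_3(\hat G-\hat V)$ the leading term carries the factor $(i\xi)^{-1}e^{-i\alpha_j\xi t}$, whose inverse transform over $|\xi|\ge R$ is a sine integral, giving $|\mathcal F^{-1}(\chi_3(\hat G-\hat V))(x,t)|\le C\sum_j t\,e^{-\delta t}|x-\alpha_j t|+Ce^{-\delta t}$ as in \eqref{est:I_1 large}; on $|x|\le Ct$ this is again $\le Ct^{-1}e^{-|x-c_j t|^2/(ct)}$. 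In each case the Young inequality then closes the estimate.

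I expect the contour-deformation step for $m_1$ to be the main obstacle. One must keep the rotated Gaussian $e^{-|d_{j\ell}|(\textrm{\normalfont Re}\,z-\textrm{\normalfont Im}\,z)(\textrm{\normalfont Re}\,z+\textrm{\normalfont Im}\,z)t}$ under control along the whole path (this is exactly where $\cos\phi>0$ and $|z|\le\varepsilon$ are used), extract the correct Gaussian weight from the oscillatory exponential on $\gamma_2$, and simultaneously dominate the remainders $\mathcal O(|\xi|^3)t$ in the exponent and $\mathcal O(|\xi|)$ in the amplitude — the bookkeeping of these competing exponentials, uniformly over the finitely many pairs $(j,\ell)$ and the two shapes of $\eta$, is the technical heart of the proof. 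The intermediate- and high-frequency contributions are comparatively routine consequences of Proposition \ref{prop:fundamental solution standard type}.
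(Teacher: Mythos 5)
Your proposal follows essentially the same route as the paper: Young's inequality reduces the multiplier norm to an $L^1_x$ bound on the kernel, the low-frequency piece is treated by the rotation $z=e^{i\phi/2}\xi$ followed by a vertical translation by $\eta=\min\{|x-c_jt|/(2|d_{j\ell}|t),\varepsilon/2\}$ along $\gamma_1\cup\gamma_2\cup\gamma_3$ to extract a Gaussian $t^{-1}e^{-|x-c_jt|^2/(ct)}$, and the intermediate and high frequency pieces are dominated by $e^{-\delta t}$ exactly as in Proposition~\ref{prop:fundamental solution standard type}. (You are in fact slightly more careful than the paper about the $\chi_1\hat V$ term, which the paper's own proof silently omits from $m_1$; as you note, its kernel is exponentially small and is absorbed on the region $|x|\le Ct$.)
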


\subsection{Case $|x|>Ct$}
We introduce the multipliers
\begin{equation*}
	m^1:=\hat G-\hat V,\qquad \textrm{and} \qquad m^2:=\hat K.
\end{equation*}
The following holds.
\begin{proposition}\label{prop:multiplier 2}
For $r\in[1,\infty]$, we have $m^j\in M_r$ with
\begin{equation}
	\|m^j\|_{M_r}\le Ct^{-\frac12},\qquad j=1,2\textrm{ and }t\ge 1.
\end{equation}
\begin{proof}
We estimate the $L^1$-norm of $\mathcal F^{-1}(m^1)$. We have
\begin{equation}\label{multiplier:1 upper}
	\mathcal F^{-1}(m^1)(x,t)=\lim_{R\to +\infty}\int_{-R}^{R}e^{ix\xi}\bigl(\hat G(\xi,t)-\hat V(\xi,t)\bigr)d\xi.
\end{equation}
On the other hand, noting that the solution $\hat G$ to \eqref{eq:fundamental system} is written as $\hat G(\xi,t)=e^{E(i\xi)t}$ and thus $\hat G$ is an entire function on the complex plane since $E(i\xi)=-(B+i\xi A)$. Moreover, due to the formula of $\hat V$ in \eqref{eq:exponential decay kernel}, $\hat V$ is also holomorphic on the complex plane. Thus, by considering $\xi=\zeta+i\eta\in\mathbb C$, one can change the path of the integral in \eqref{multiplier:1 upper} from $\{(\zeta,0):\zeta\textrm{ from }-R \textrm{ to } R\}$ to the path $\gamma:=\gamma_1\cup \gamma_2\cup\gamma_3$ in the complex plane where
\begin{equation}
	\gamma_1:=\left\{(\zeta,\eta):\zeta=-R,\eta\textrm{ from }0 \textrm{ to } \frac xt\right\},
\end{equation}
\begin{equation}
	\gamma_2:=\left\{(\zeta,\eta):\zeta\textrm{ from } -R \textrm{ to } R,\eta=\frac xt\right\}
\end{equation}
and
\begin{equation}
	\gamma_3:=\left\{(\zeta,\eta):\zeta=R,\eta\textrm{ from }\frac xt \textrm{ to } 0\right\}.
\end{equation}
Furthermore, since $R$ and $|x|/t$ large, along these curves, the solution $\hat G$ has the representation of the high frequency case \eqref{eq:G large}. Therefore, by the same computation as in \eqref{est:gamma1}-\eqref{est:gamma34} and letting $R\to +\infty$, we obtain
\begin{equation}
	\bigl| \mathcal F^{-1}(m^1)(x,t)\bigr|\le Ce^{-\frac{|x|^2}{ct}}\le Ct^{-1}e^{-\frac{|x|^2}{2c t}}
\end{equation}
for some $c,C>0$ since $e^{-\frac{|x|^2}{2ct}}\le e^{-C^2t}\le t^{-1}$ due to the fact that $|x|>Ct$ with $C$ large enough. Hence, we obtain
\begin{equation}
	\bigl\| \mathcal F^{-1}(m^1) \bigr\|_{L^1}\le Ct^{-\frac12}.
\end{equation}
Thus, by the Young inequality, for $r\in[1,\infty]$, we have
\begin{equation}
	\bigl\| m^1\bigr\|_{M_r}=\sup_{\|f\|_{L^r}=1}\bigl\|\mathcal F^{-1}(m^1)*f  \bigr\|_{L^r}\le \bigl\|\mathcal F^{-1}(m^1) \bigr\|_{L^1}\le Ct^{-\frac12}.
\end{equation}
The estimate for $m^2$ are similar and the proof is done.
\end{proof}
\end{proposition}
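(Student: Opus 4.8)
The plan is to reduce both bounds to $L^1$-estimates on the inverse Fourier transforms of $m^1$ and $m^2$ and then invoke Young's inequality: for $g\in L^1$ one has $\|g*f\|_{L^r}\le\|g\|_{L^1}\|f\|_{L^r}$ for every $r\in[1,\infty]$, so it is enough to prove $\|\mathcal F^{-1}(m^j)\|_{L^1}\le Ct^{-1/2}$ for $t\ge1$ and $j=1,2$, the pointwise bounds underlying these estimates being valid in the regime $|x|>Ct$ relevant to these multipliers. The gain over the trivial size $O(1)$ comes from the largeness of $|x|$: whenever $|x|>Ct$ with $C$ large a factor $e^{-|x|^2/(2ct)}\le e^{-C^2t/(2c)}\le t^{-1}$ can be extracted for $t\ge1$.

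For $m^1=\hat G-\hat V$, the starting observation is that both $\hat G(\cdot,t)=e^{E(i\cdot)t}$ and $\hat V(\cdot,t)$ — the latter by its closed form \eqref{eq:exponential decay kernel} as a finite combination of entire exponentials — extend to entire $M_n(\mathbb C)$-valued functions of $\xi$. I would then write $\mathcal F^{-1}(m^1)(x,t)=\lim_{R\to\infty}\int_{-R}^{R}e^{ix\xi}(\hat G-\hat V)(\xi,t)\,d\xi$ and, for $|x|>Ct$, deform $[-R,R]$ onto the broken contour that rises at $\mathrm{Re}\,\xi=-R$ to height $\mathrm{Im}\,\xi=x/t$, runs horizontally to $\mathrm{Re}\,\xi=R$ at that height, and descends back to the real axis — the same rectangular deformation already used in the proof of Proposition \ref{prop:fundamental solution standard type}. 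On the portions with $|\mathrm{Re}\,\xi|\ge R$ the operator $\hat G$ carries the high-frequency representation \eqref{eq:G large} of Remark \ref{rem:high frequency}, so each summand contains $e^{-\beta_{j\ell}t}$ with $\mathrm{Re}(\beta_{j\ell})\ge\theta>0$ by Proposition \ref{prop:high frequency}, while the only growth introduced by the shift, $e^{x\alpha_j}\le e^{\varepsilon|x|^2/t}$ as in \eqref{est:alphaj=0}, is dominated by the Gaussian $e^{-|x|^2/t}$ generated along the first two segments. Consequently the estimates \eqref{est:gamma1}--\eqref{est:gamma34} of that proof apply verbatim: the descending segment and both vertical segments drop out as $R\to\infty$, leaving $|\mathcal F^{-1}(m^1)(x,t)|\le Ce^{-|x|^2/(ct)}$ for $|x|>Ct$. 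Extracting a factor $t^{-1}$ as above and integrating in $x$ yields $\|\mathcal F^{-1}(m^1)\|_{L^1}\le Ct^{-1/2}$, hence $\|m^1\|_{M_r}\le Ct^{-1/2}$.

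For $m^2=\hat K$, I would work directly from \eqref{eq:parabolic kernel}: $\hat K$ is a finite sum, over $j$ and $\ell$, of $e^{(-ic_j\xi-d_{j\ell}\xi^2)t}$ times a polynomial in $\xi^2t$ coming from $e^{-N_{j\ell}^{(0)}\xi^2t}$. Each summand is a complex Gaussian in $\xi$ with $\mathrm{Re}(d_{j\ell})\ge\theta>0$; rotating $\xi\mapsto e^{i\phi/2}\xi$ with $\phi=\arg(d_{j\ell})\in(-\pi/2,\pi/2)$ and then shifting the imaginary part, exactly as in the treatment of $\chi_1I_1$ inside the proof of Proposition \ref{prop:multiplier 1}, gives the heat-kernel bound $|\mathcal F^{-1}(\hat K)(x,t)|\le Ct^{-1/2}e^{-|x-c_jt|^2/(ct)}$, the polynomial factor being absorbed by the Gaussian. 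For $|x|>Ct$ with $C>\max_j|c_j|$ one has $|x-c_jt|\ge(C-\max_j|c_j|)t$, so once more a power $t^{-1}$ can be extracted; integrating gives $\|\mathcal F^{-1}(\hat K)\|_{L^1}\le Ct^{-1}\le Ct^{-1/2}$, and Young's inequality completes the estimate for $m^2$.

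The step I expect to be the main obstacle is the contour deformation for $m^1$: one has to justify that the entire function $\hat G-\hat V$ grows mildly enough, and that the high-frequency expansion \eqref{eq:G large} is uniformly valid, along the tilted line $\mathrm{Im}\,\xi=x/t$ whose height tends to infinity with $|x|/t$, so that the deformation is legitimate and the vertical segments at $\mathrm{Re}\,\xi=\pm R$ truly vanish as $R\to\infty$. The delicate point throughout is the competition between the dissipative damping $e^{-\theta t}$ and the exponential growth $e^{|x||\alpha_j|}$ created by the shift; this is controlled precisely by the hypothesis $|x|>Ct$ through $e^{|x||\alpha_j|}\le e^{\varepsilon|x|^2/t}$, and the same argument would break down outside that regime — which is exactly why the case $|x|\le Ct$ is handled separately in Proposition \ref{prop:multiplier 1}.
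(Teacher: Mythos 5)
Your argument for $m^1$ — extending $\hat G-\hat V$ to an entire function, deforming the integration contour to the rectangle with horizontal segment at $\mathrm{Im}\,\xi = x/t$, invoking the high-frequency representation along the raised contour, and harvesting the Gaussian $e^{-|x|^2/(ct)}\le t^{-1}e^{-|x|^2/(2ct)}$ from $|x|>Ct$ — is precisely the paper's proof, down to the same appeal to the estimates \eqref{est:gamma1}--\eqref{est:gamma34} and the final use of Young's inequality. Your explicit rotate-and-shift treatment of $m^2=\hat K$ simply fleshes out the paper's terse ``the estimate for $m^2$ is similar'' with the heat-kernel contour argument already used in Proposition \ref{prop:multiplier 1}, so the two proofs coincide in substance.
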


\section{Symmetry}\label{sec:symmetry}
We will discuss about the conditions {\bf C'} and {\bf S} in order to increase the decay rate of the solution to the system \eqref{eq:original hyperbolic system}. Recalling the matrices $C$ and $D$ as in \eqref{eq:reduced system} and \eqref{eq:matrix D} respectively.

\begin{lemma}\label{lem:analytic eigenvalue}
If the condition {\bf C'} holds, then there are $m$ distinct eigenvalues of $E(i\xi)=-(B+i\xi A)$ converging to $0$ as $|\xi|\to 0$ and they are expanded analytically, where $m=\dim\ker(B)$. The approximation of the $j$-th eigenvalue has the form
\begin{equation}\label{eq:eigenvalues of E}
	\lambda_j(i\xi)=-ic_j\xi-d_j\xi^2+\mathcal O(|\xi|^3),\qquad |\xi|\to 0,
\end{equation}
where $c_j\in\sigma(C)$ considered in $\ker(B)$ and $d_{j}\in\sigma\bigl(P_j^{(0)}DP_j^{(0)}\bigr)$ considered in $\ker(C-c_jI)$ with $P_j^{(0)}$ the eigenprojection associated with $c_j$ for $j\in\{1,\dots,m\}$.
\end{lemma}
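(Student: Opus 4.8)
The plan is to upgrade the argument of Proposition~\ref{prop:low frequency 1}: the extra strength of condition \textbf{C'} is precisely what forces the eigenvalues produced by the reduction step to be \emph{algebraically simple}, which turns the mere asymptotic expansion obtained there into genuine holomorphy of each branch in $\zeta=i\xi$ near $\zeta=0$, hence an honest $\mathcal{O}(|\xi|^3)$ remainder.

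First I would run the opening stages of the proof of Proposition~\ref{prop:low frequency 1}. Passing to $T(\zeta)=B+\zeta A$ with $\zeta=i\xi$, condition \textbf{B} makes $0$ a semi-simple eigenvalue of $B$ with $\mathrm{ran}(P_0^{(0)})=\ker(B)$ of dimension $m$, so by Proposition~\ref{prop:construction of subprojections} the total projection $P_0(\zeta)$ of the $0$-group is holomorphic at $\zeta=0$ with $P_0(\zeta)=P_0^{(0)}+\zeta P_0^{(1)}+\mathcal{O}(|\zeta|^2)$. Using the holomorphic transformation function carrying the fixed space $\ker(B)$ onto $\mathrm{ran}(P_0(\zeta))$ (the reduction process, see \citep{kato}), the $0$-group of $T(\zeta)$ is faithfully represented by the spectrum of a holomorphic family $\widetilde{T}(\zeta)$ acting on $\ker(B)$, with $\widetilde{T}(0)=0$ by semi-simplicity. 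Writing $\widetilde{T}(\zeta)=\zeta\,S(\zeta)$ with $S$ holomorphic, the computation $T(\zeta)P_0(\zeta)=\zeta\,(C-\zeta D+\mathcal{O}(|\zeta|^2))$ already carried out in Proposition~\ref{prop:low frequency 1} identifies $S(0)$ with the restriction of $C$ to $\ker(B)$ and $S'(0)$ with the restriction of $-D$.

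The key step is then the following. Condition \textbf{C'} asserts that $C$ has $m$ distinct eigenvalues on the $m$-dimensional space $\ker(B)$, hence each $c_j\in\sigma(C)$ considered in $\ker(B)$ is simple. For a holomorphic family whose value at the origin has only simple eigenvalues, the standard perturbation theory for a simple eigenvalue \citep{kato} gives, for $\zeta$ in a neighbourhood of $0$, exactly $m$ holomorphic simple eigenvalues $\widetilde{\lambda}_j(\zeta)$ of $S(\zeta)$ with $\widetilde{\lambda}_j(0)=c_j$ and holomorphic rank-one eigenprojections $\widetilde{P}_j(\zeta)=P_j^{(0)}+\mathcal{O}(|\zeta|)$. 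Hence the $0$-group eigenvalues of $T(\zeta)$ are $\lambda_j^{T}(\zeta)=\zeta\,\widetilde{\lambda}_j(\zeta)$, which are holomorphic, pairwise distinct for $\zeta\neq0$ small (because their rescalings $\widetilde{\lambda}_j$ have the distinct limits $c_j$), and expand as $\lambda_j^{T}(\zeta)=c_j\zeta+\widetilde{\lambda}_j'(0)\zeta^2+\mathcal{O}(|\zeta|^3)$. The first-order coefficient follows from the trace formula for a simple eigenvalue: $\widetilde{\lambda}_j'(0)=\mathrm{tr}(P_j^{(0)}S'(0))=-\mathrm{tr}(P_j^{(0)}DP_j^{(0)})=-d_j$, using that $P_j^{(0)}$ has rank one, so that $d_j\in\sigma(P_j^{(0)}DP_j^{(0)})$ considered in $\ker(C-c_jI)$ is the scalar value of this operator on its one-dimensional range. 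Finally $E(i\xi)=-T(i\xi)$, so the eigenvalues of $E$ converging to $0$ are $\lambda_j(i\xi)=-\lambda_j^{T}(i\xi)=-ic_j\xi-d_j\xi^2+\mathcal{O}(|\xi|^3)$ as $|\xi|\to0$; there are exactly $m$ of them since the $0$-group has total algebraic multiplicity $\dim\ker(B)=m$ and the $m$ branches above are distinct.

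I expect the main obstacle to be the reduction bookkeeping: making rigorous that the whole $0$-group of $T(\zeta)$ — not one eigenvalue at a time — is represented, with analyticity preserved, by the holomorphic family $\widetilde{T}(\zeta)$ on the fixed space $\ker(B)$, and that no eigenvalue in the group is missed; once this is set up, condition \textbf{C'} makes holomorphy of each branch and the cubic remainder essentially automatic. A secondary point is the first-order perturbation (trace) formula $\widetilde{\lambda}_j'(0)=-d_j$, which relies on $P_j^{(0)}$ being a rank-one projection — again a consequence of \textbf{C'} — so that the eigenvalue $d_j$ of $P_j^{(0)}DP_j^{(0)}$ in $\ker(C-c_jI)$ is simply a scalar.
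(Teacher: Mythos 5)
Your proof is correct and follows essentially the same route as the paper's: reduce, as in the proof of Proposition~\ref{prop:low frequency 1}, to a holomorphic family $\widetilde T(\zeta)=\zeta\,S(\zeta)$ on $\ker(B)$ with $S(0)=C|_{\ker(B)}$; observe that condition {\bf C'} makes the $m$ eigenvalues of $S(0)$ simple; and invoke analyticity of simple eigenvalues under holomorphic perturbation to obtain $m$ distinct holomorphic branches, hence honest $\mathcal O(|\xi|^3)$ remainders. The paper packages the same idea as the observation that, $\ker(C-c_jI)$ being one-dimensional, every coefficient appearing in further stages of the reduction is automatically simple so that no splitting can occur. One imprecision in your write-up is worth noting: after conjugation by the transformation function $U(\zeta)$ one has $S'(0)=-D|_{\ker(B)}+[C,W]$ with $W=U'(0)$, not simply $-D|_{\ker(B)}$. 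This is harmless here, because the trace formula kills the commutator, namely $\textrm{\normalfont tr}\bigl(P_j^{(0)}[C,W]\bigr)=0$ using $P_j^{(0)}C=CP_j^{(0)}=c_jP_j^{(0)}$ together with cyclicity of the trace, so $\widetilde\lambda_j'(0)=-\textrm{\normalfont tr}\bigl(P_j^{(0)}D\bigr)=-d_j$ as you assert; but a careful reader would want that cancellation pointed out.
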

\begin{proof}
This is just a consequence of Proposition \ref{prop:low frequency 1}. Indeed, from \eqref{eq:expansion of lambdajell}, the approximation of the eigenvalues of $E$ converging to $0$ as $|\xi|\to 0$ is
\begin{equation*}
	\lambda_{j\ell}(i\xi)=-ic_j\xi-d_{j\ell} \xi^2+{\scriptstyle\mathcal O}(|\xi|^2),\qquad |\xi|\to 0,
\end{equation*}
where $c_j\in\sigma(C)$ considered in $\ker(B)$ and $d_{j\ell}\in\sigma\bigl(P_j^{(0)}DP_j^{(0)}\bigr)$ considered in $\ker(C-c_jI)$ for $\ell=1,\dots,h_j$ with $P_j^{(0)}$ the eigenprojection associated with $c_j$ for $j\in\{1,\dots,h\}$. Noting that $h_j$ is the cardinality of the spectrum of $P_j^{(0)}DP_j^{(0)}$ considered in $\ker(C-c_jI)$ for $j\in\{1,\dots,h\}$ and $h$ is the cardinality of the spectrum of $C$ considered in $\ker(B)$.

On the other hand, since the condition {\bf C'} holds, $h=m$ where $m=\dim\ker(B)$. Moreover, also by the condition {\bf C'}, one deduces that $c_j$ is simple for all $j\in\{1,\dots,m\}$. Thus, $\dim\ker(C-c_jI)=1$ and therefore $h_j=1$ for $j\in\{1,\dots,m\}$. It implies that there is only one $d_j:=d_{j1}\in \bigl(P_j^{(0)}DP_j^{(0)}\bigr)$ considered in $\ker(C-c_jI)$ for each $j\in\{1,\dots,m\}$. Moreover, $d_j$ is also simple, and thus, one can continue the reduction process as in the proof of Proposition \ref{prop:low frequency 1}. Furthermore, due to the simplicity of the coefficients in the expansion of $\lambda_j$ provided $c_j$ is simple and the reduction process, there is no splitting in the expansion of the eigenvalues $\lambda_j$ {\it i.e.} the eigenvalues $\lambda_j$ can be expanded analytically for $j\in\{1,\dots,m\}$ and the proof is done.
\end{proof}

Let $p(\lambda,\kappa):=\det (E(\kappa)-\lambda I)$ be the {\it dispersion polynomial} associated with $E(\kappa)=-(B+\kappa A)$, where $\lambda,\kappa\in\mathbb C$.
\begin{lemma}\label{lem:symmetry}
If the condition {\bf S} holds, then $p(\lambda,-\kappa)=p(\lambda,\kappa)$
for any $\lambda,\kappa\in\mathbb{C}$.
\end{lemma}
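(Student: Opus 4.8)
The plan is to exploit the hypothesis {\bf S} as a pair of similarity relations and then observe that conjugation by $S$ sends $E(\kappa)$ to $E(-\kappa)$. First I would rewrite the two identities $AS=-SA$ and $BS=SB$ in the equivalent form
\begin{equation*}
	S^{-1}AS=-A,\qquad S^{-1}BS=B,
\end{equation*}
which is legitimate since $S$ is invertible by assumption. Since $\kappa\in\mathbb C$ is a scalar, it commutes with $S^{-1}$ and $S$, so adding these relations after multiplying the first by $\kappa$ gives
\begin{equation*}
	S^{-1}(B+\kappa A)S=B-\kappa A .
\end{equation*}

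Next I would insert the spectral parameter. Because $S^{-1}(\lambda I)S=\lambda I$, the previous display yields
\begin{equation*}
	S^{-1}\bigl(E(\kappa)-\lambda I\bigr)S
	=S^{-1}\bigl(-(B+\kappa A)-\lambda I\bigr)S
	=-(B-\kappa A)-\lambda I
	=E(-\kappa)-\lambda I,
\end{equation*}
for every $\lambda,\kappa\in\mathbb C$, where I used $E(\kappa)=-(B+\kappa A)$ and $E(-\kappa)=-(B-\kappa A)$. Thus the matrices $E(\kappa)-\lambda I$ and $E(-\kappa)-\lambda I$ are similar.

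Finally, taking determinants and using the multiplicativity of $\det$ together with $\det(S^{-1})\det(S)=1$,
\begin{equation*}
	p(\lambda,\kappa)=\det\bigl(E(\kappa)-\lambda I\bigr)
	=\det\bigl(S^{-1}(E(\kappa)-\lambda I)S\bigr)
	=\det\bigl(E(-\kappa)-\lambda I\bigr)=p(\lambda,-\kappa),
\end{equation*}
which is the claimed identity. There is no real obstacle here: the only points requiring (minor) care are that $S$ is genuinely invertible so that the conjugation is a similarity, and that the scalar $\kappa$ passes through $S^{\pm1}$, both of which are immediate. If one wishes, the same computation can be recorded at the level of the operator $E(i\xi)$ by setting $\kappa=i\xi$, giving $Q$-free confirmation that reflection $x\mapsto-x$ is a symmetry of \eqref{eq:original hyperbolic system}, but this is not needed for the statement.
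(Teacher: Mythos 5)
Your proof is correct, and it takes a genuinely different — and in fact cleaner — route than the paper's. The paper first shows only that $p(\lambda,\kappa)$ and $q(\lambda,\kappa):=p(\lambda,-\kappa)$ have the same zero set (via the eigenvector substitution $v=S^{-1}u$), and then concludes $p\equiv q$ by comparing the factorizations of two monic degree-$n$ polynomials with the same roots; that last step is slightly delicate, since agreement of root sets alone does not automatically give agreement of multiplicities. Your argument bypasses this entirely: you observe that $S^{-1}(E(\kappa)-\lambda I)S=E(-\kappa)-\lambda I$, so the two matrices are similar and therefore have literally the same characteristic polynomial, which is exactly the asserted identity $p(\lambda,\kappa)=p(\lambda,-\kappa)$. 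This buys you a one-line conclusion via $\det$-multiplicativity, with no discussion of zero sets or multiplicities needed; it also makes transparent that the symmetry of $E$ under $\kappa\mapsto-\kappa$ is a similarity, not merely a spectral coincidence, which is a stronger structural fact that could be reused elsewhere (e.g.\ to transport eigenprojections). The paper's approach would be more robust if $p$ and $q$ were not a priori characteristic polynomials of similar matrices, but here the similarity is available, so your route is preferable.
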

\begin{proof}
For $q(\lambda,\kappa):=p(\lambda,-\kappa)$, there holds $p(\lambda,\kappa)=0$
if and only if $q(\lambda,\kappa)=0$. Indeed, if the couple $(\lambda,\kappa)$
is such that $p(\lambda,\kappa)=0$ if and only if there exist a nonzero
vector $u$ such that 
\begin{equation*}
	\bigl(\lambda\,I+\kappa A +B\bigr)u=0.
\end{equation*}
In such a case, setting $v=S^{-1}u$, there also holds
\begin{equation*}
	\begin{aligned}
		0 & =S^{-1}\bigl(\lambda\,I+ \kappa A +B\bigr)Sv=S^{-1}\bigl(\lambda\,S+\kappa A  S+BS\bigr)v\\
			 & =S^{-1}\bigl(\lambda\,S-\kappa SA+SB\bigr)v=\bigl(\lambda\,I- \kappa A +B\bigr)v.
	\end{aligned}
\end{equation*}
Hence, $q(\lambda,\kappa)=0$. The other implication can be proved
in the same way.

For fixed $\kappa$, the polynomials $p$ and $q$ have both degree
$n$ in $\lambda$ with principal term $\lambda^n$. Hence, there
exist $\lambda_1^p,\dots,\lambda_n^p$ and $\lambda_1^q,\dots,\lambda_n^q$
with $\lambda_i^{p,q}=\lambda_i(\kappa)$ such that 
\begin{equation*}
	p(\lambda,\kappa)=\prod_{k=1}^n\bigl(\lambda-\lambda_k^p(\kappa)\bigr)\quad\textrm{and}\quad 
	q(\lambda,\kappa)=\prod_{k=1}^n\bigl(\lambda-\lambda_k^q(\kappa)\bigr)
\end{equation*}
Since $p$ and $q$ have the same zero-set, for any $k\in\{1,\dots,n\}$
there exists $j$ such that $\lambda_k^q=\lambda_j^p$. As
a consequence, $p\equiv q$.
\end{proof}

\begin{corollary}\label{cor:symmetry 0 group}
If the conditions {\bf C'} and {\bf S} hold, then there are $m$ analytic distinct eigenvalues of $E$ converging to $0$ as $|\xi|\to 0$ and the $j$-th eigenvalue has the aproximation
\begin{equation}
	\lambda_j(i\xi)=-ic_j\xi-d_j\xi^2+\mathcal O(|\xi|^4),\qquad |\xi|\to 0,
\end{equation}
where $c_j\in\sigma(C)$ considered in $\ker(B)$ and $d_{j}\in\sigma\bigl(P_j^{(0)}DP_j^{(0)}\bigr)$ considered in $\ker(C-c_jI)$ with $P_j^{(0)}$ the eigenprojection associated with $c_j$ for $j\in\{1,\dots,m\}$.
\end{corollary}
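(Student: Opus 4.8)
The plan is to run the analytic expansion of Lemma~\ref{lem:analytic eigenvalue} and then gain the extra order $\mathcal O(|\xi|^4)$ from two structural facts: the parity of the dispersion polynomial in Lemma~\ref{lem:symmetry}, and the reality of $A,B$. Set $\kappa=i\xi$. By Lemma~\ref{lem:analytic eigenvalue}, condition {\bf C'} makes the $m$ eigenvalues of $E$ tending to $0$ analytic near $\kappa=0$ and mutually distinct for $0<|\kappa|$ small, so each admits a convergent expansion
\[
	\lambda_j(\kappa)=-c_j\kappa+d_j\kappa^2+e_j\kappa^3+\mathcal O(|\kappa|^4).
\]
Here $c_j\in\mathbb R$ by {\bf C'}, and $d_j\in\mathbb R$ since $D$ (defined through \eqref{eq:P01}--\eqref{eq:matrix D}) has real entries while, under {\bf C'}, $P_j^{(0)}$ has real entries and one-dimensional range, on which $P_j^{(0)}DP_j^{(0)}$ acts by the scalar $d_j$. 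Because $(i\xi)^3=-i\xi^3$, the claimed estimate $\lambda_j(i\xi)=-ic_j\xi-d_j\xi^2+\mathcal O(|\xi|^4)$ is precisely the statement that $e_j=0$ for every $j$.

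Next I would establish two symmetries of the expansion. First, reality: since $A,B$ are real, $\overline{E(\kappa)}=E(\bar\kappa)$, hence $\kappa\mapsto\overline{\lambda_j(\bar\kappa)}$ is an analytic $0$-group branch with leading term $-\overline{c_j}\,\kappa=-c_j\kappa$; distinctness of the $c_k$ forces it to equal $\lambda_j$, so all Taylor coefficients of $\lambda_j$ are real, in particular $e_j\in\mathbb R$. Second, reflection: by Lemma~\ref{lem:symmetry}, $p(\lambda,-\kappa)=p(\lambda,\kappa)$, so the root set of $p(\cdot,\kappa)$ — and hence the $0$-group, characterized by convergence to $0$ — is invariant under $\kappa\mapsto-\kappa$. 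Thus there is a constant involution $\tau$ of $\{1,\dots,m\}$ with $\lambda_j(-\kappa)=\lambda_{\tau(j)}(\kappa)$; comparing coefficients gives $c_{\tau(j)}=-c_j$, $d_{\tau(j)}=d_j$ and $e_{\tau(j)}=-e_j$. For the branch(es) with $c_j=0$ this already concludes: distinctness forces $\tau(j)=j$, whence $e_j=-e_j=0$.

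The main obstacle is the branches with $c_j\neq0$, where $\tau(j)=j^*$ satisfies $c_{j^*}=-c_j\neq c_j$, so the reflection only relates $\lambda_j$ to the paired branch $\lambda_{j^*}$ and the two facts above merely give $e_{j^*}=-e_j$ with $e_j\in\mathbb R$ — enough to see $\mathrm{Re}\,\lambda_j(i\xi)=-d_j\xi^2+\mathcal O(|\xi|^4)$, but not $e_j=0$ directly. To try to close this case I would push the reduction in the proof of Proposition~\ref{prop:low frequency 1} to one higher order, expanding the reduced operator on $\mathrm{ran}(P_j^{(0)})$ to third order so as to read off $e_j$ as an explicit expression built from $P_j^{(0)},P_j^{(1)},D,A,B$, and then feed in the similarity $E(-\kappa)=S^{-1}E(\kappa)S$ at the level of eigenprojections — namely $P_{j^*}(\kappa)=S\,P_j(-\kappa)\,S^{-1}$ — to examine whether the relations $AS=-SA$, $BS=SB$, combined with {\bf C'}, force that scalar to vanish. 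Pinning down this third-order coefficient, and determining whether the similarity $S$ constrains it beyond the bare parity of $p$, is the delicate part and the step I would scrutinize most carefully.
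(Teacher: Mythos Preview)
Your approach coincides with the paper's at every structural step: push the expansion of Lemma~\ref{lem:analytic eigenvalue} one order further to isolate a third--order scalar $e_j$, then invoke the $\kappa\mapsto-\kappa$ parity of Lemma~\ref{lem:symmetry} to force $e_j=0$. The paper does not use your reality observation ($e_j\in\mathbb R$) at all; its entire argument is the parity of the spectrum.

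Where you and the paper differ is precisely at the point you flag as delicate. The paper writes the reflected expansion $\lambda_j(-i\xi)=-i(-c_j)\xi-d_j\xi^2-(-e_j)(i\xi)^3+\mathcal O(|\xi|^4)$, appeals to $\sigma(E(i\xi))=\sigma(E(-i\xi))$, and then asserts in one line that ``$\sigma(M_j)$ contains both $e_j$ and $-e_j$'', concluding $e_j=0$ from $\dim\ker\bigl(P_j^{(0)}DP_j^{(0)}-d_jI\bigr)=1$. In other words, the paper treats all branches uniformly with this single sentence and does not separate the cases $c_j=0$ and $c_j\neq 0$.

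Your analysis is sharper than the paper's written argument: when $c_j\neq 0$ the reflection sends branch $j$ to a distinct branch $\tau(j)$, and the relation one reads off is $e_{\tau(j)}=-e_j$ with $-e_j$ lying in $\sigma(M_{\tau(j)})$ on $\mathrm{ran}\,P_{\tau(j)}^{(0)}$, not in $\sigma(M_j)$ on $\mathrm{ran}\,P_j^{(0)}$. The paper does not supply the further step linking $M_{\tau(j)}$ to $M_j$ that would close this; it simply asserts the conclusion. So the additional detail you were hoping the paper would provide for $c_j\neq 0$ is not there --- your proposed route (compute $e_j$ explicitly from the third--order reduction and exploit the similarity $E(-\kappa)=S^{-1}E(\kappa)S$ at the level of projections) goes beyond what the paper actually writes down.
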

\begin{proof}
From the proof of Lemma \ref{lem:analytic eigenvalue}, since $d_j$ is simple for all $j\in\{1,\dots,m\}$, one can continue the reduction process in the proof of Proposition \ref{prop:low frequency 1} and thus the formula \eqref{eq:eigenvalues of E} can be refined as
\begin{equation*}
	\lambda_j(i\xi)=-ic_j\xi-d_j\xi^2-e_j(i\xi)^3+\mathcal O(|\xi|^4),\qquad |\xi|\to 0,
\end{equation*}
where $e_j\in\sigma(M_j)$ considered in $\ker\bigl(P_j^{(0)}DP_j^{(0)}-d_jI\bigr)$ for some suitable matrix $M_j$ for $j\in\{1,\dots,m\}$.

By recalling the proof of Proposition \ref{prop:low frequency 1} and by Lemma \ref{lem:analytic eigenvalue} one more time, substituting $(-i\xi)$ into $i\xi$, there are $m$ analytic distinct eigenvalues of $E(-i\xi)$ converging to $0$ as $|\xi|\to 0$ such that the 
\begin{equation}\label{eq:eigenvalues of E'}
	\lambda_j(-i\xi)=-i(-c_j)\xi-d_j\xi^2-(-e_j)(i\xi)^3+\mathcal O(|\xi|^4),\qquad |\xi|\to 0,
\end{equation}
where $c_j,d_j$ and $e_j$ are already introduced as before.

On the other hand, since $\sigma(E(i\xi))\equiv\sigma(E(-i\xi))$ due to Lemma \ref{lem:symmetry}, one deduces that $\sigma(M_j)$ contains both $e_j$ and $-e_j$. Moreover, since $\dim\ker\bigl(P_j^{(0)}DP_j^{(0)}-d_jI\bigr)=1$, one concludes that $e_j=-e_j=0$. The proof is done.
\end{proof}

\begin{remark}
The nilpotent parts associated with $\lambda_j$ for $j\in\{1,\dots,m\}$ are zero since these eigenvalues are distinct and simple.

Moreover, for each $j\in\{1,\dots,m\}$, the total projection associated with $\lambda_j$ is itself the eigenprojection associated with $\lambda_j$ and has the expansion \eqref{eq:expansion of Pj} with $\zeta=i\xi$ {i.e.} we have
\begin{equation}
	P_j(i\xi)=P_j^{(0)}+i\xi P_j^{(1)}+\mathcal O(|\xi|^2),\qquad |\xi|\to 0,
\end{equation}
where $P_j^{(1)}$ can be computed by the formula \eqref{eq:Pj1} for $j\in\{1,\dots,m\}$. This is based on the fact that there is no splitting after the second step of the reduction process and the formula of $P_j^{(1)}$ is proved similarly to the proof of the formula \eqref{eq:P01} in the proof of Proposition \ref{prop:low frequency 1}.
\end{remark}

One sets the kernel
\begin{equation}
	\hat K^*(\xi,t):=\sum_{j=1}^me^{(-ic_j\xi-d_j\xi^2)t}\bigl(P_j^{(0)}+i\xi P_j^{(1)}\bigr).
\end{equation}
Then, the first estimate in \eqref{est:low} of Proposition \ref{prop:fundamental solution standard type} can be modified by
\begin{proposition}\label{prop:low refined}
	If the conditions {C'} and {\bf S} hold, for $r\in[1,\infty]$, one has
	\begin{equation}\label{est:low refined}
		\bigl\|\hat G-\hat K^*\bigr\|_{L^{r}}\le Ct^{-\frac12\frac{1}{r}-1},
	\end{equation}
for $|\xi|<\varepsilon$ small enough and $t\ge 1$.
\end{proposition}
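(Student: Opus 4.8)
The plan is to rerun the low-frequency part of the proof of Proposition~\ref{prop:fundamental solution standard type}, now feeding in the sharper expansions available under {\bf C'} and {\bf S}. As in that proof, for $|\xi|<\varepsilon$ the solution of \eqref{eq:fundamental system} splits as $\hat G=\hat G_1+\hat G_2$, where $\hat G_2$ is the contribution of the groups attached to the nonzero eigenvalues of $B$ (the term \eqref{eq:G2 small}) and $\hat G_1$ is the contribution of the $0$-group. By Lemma~\ref{lem:analytic eigenvalue}, Corollary~\ref{cor:symmetry 0 group} and the remark following it, under {\bf C'} and {\bf S} the $0$-group consists of $m$ simple analytic eigenvalues with vanishing nilpotent parts, so that
\[
\hat G_1(\xi,t)=\sum_{j=1}^m e^{\lambda_j(i\xi)t}P_j(i\xi),
\]
where, as $|\xi|\to0$,
\[
\lambda_j(i\xi)=-ic_j\xi-d_j\xi^2+\mathcal O(|\xi|^4)
\qquad\textrm{and}\qquad
P_j(i\xi)=P_j^{(0)}+i\xi P_j^{(1)}+\mathcal O(|\xi|^2).
\]
The contribution of $\hat G_2$ needs nothing new: exactly as in \eqref{est:J small} one has $\|\hat G_2\|_{L^r}\le Ce^{-\delta t}\le Ct^{-\frac12\frac{1}{r}-1}$ for $t\ge1$, so it remains to estimate $\hat G_1-\hat K^*$.

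For this I would split, term by term,
\[
e^{\lambda_j(i\xi)t}P_j(i\xi)-e^{(-ic_j\xi-d_j\xi^2)t}\bigl(P_j^{(0)}+i\xi P_j^{(1)}\bigr)=\mathrm{I}_j+\mathrm{II}_j,
\]
with $\mathrm{I}_j:=e^{(-ic_j\xi-d_j\xi^2)t}\bigl(e^{r_j(\xi)t}-1\bigr)P_j(i\xi)$ and $\mathrm{II}_j:=e^{(-ic_j\xi-d_j\xi^2)t}\bigl(P_j(i\xi)-P_j^{(0)}-i\xi P_j^{(1)}\bigr)$, where $r_j(\xi):=\lambda_j(i\xi)+ic_j\xi+d_j\xi^2=\mathcal O(|\xi|^4)$. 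Since $c_j\in\mathbb R$ and $\textrm{\normalfont Re}\,(d_j)\ge\theta>0$ (recall \eqref{eq:real part of djell}), one has $\bigl|e^{(-ic_j\xi-d_j\xi^2)t}\bigr|\le e^{-\theta\xi^2t}$. Using $|e^z-1|\le|z|e^{|z|}$, the bound $|r_j(\xi)|\le C|\xi|^4$, and $|\xi|<\varepsilon$ to write $|\xi|^4t\le\varepsilon^2\xi^2t$, one gets $|e^{r_j(\xi)t}-1|\le C|\xi|^4t\,e^{C\varepsilon^2\xi^2t}$; after shrinking $\varepsilon$ so that $C\varepsilon^2\le\frac{\theta}{2}$ and using the boundedness of $P_j(i\xi)$ for small $|\xi|$, this yields $|\mathrm{I}_j|\le C|\xi|^4t\,e^{-\frac12\theta\xi^2t}$. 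For $\mathrm{II}_j$ the remainder estimate on $P_j$ gives directly $|\mathrm{II}_j|\le C|\xi|^2e^{-\theta\xi^2t}$.

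Finally I would apply Lemma~\ref{lem:main estimates} on the set $|\xi|<\varepsilon$ (restricting the domain only decreases the $L^r$ norm): with $a=4$, $b=1$, $d=2$ it gives $\|\mathrm{I}_j\|_{L^r}\le Ct^{-\frac12\frac{1}{r}+1-2}=Ct^{-\frac12\frac{1}{r}-1}$, and with $a=2$, $b=0$, $d=2$ it gives $\|\mathrm{II}_j\|_{L^r}\le Ct^{-\frac12\frac{1}{r}-1}$. Summing over the finitely many $j\in\{1,\dots,m\}$ and adding the $\hat G_2$ bound proves \eqref{est:low refined}. I do not expect a genuine obstacle: the whole improvement over \eqref{est:low} (a gain of $t^{-1/2}$) comes from two places --- the upgrade of the eigenvalue remainder from $\mathcal O(|\xi|^3)$ to $\mathcal O(|\xi|^4)$, which is precisely where Hypothesis~{\bf S} enters, through Corollary~\ref{cor:symmetry 0 group}, and the upgrade of the projection remainder from $\mathcal O(|\xi|)$ to $\mathcal O(|\xi|^2)$ once the first-order term $i\xi P_j^{(1)}$ is built into $\hat K^*$, which relies on the simplicity furnished by {\bf C'}. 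The only point requiring a little care is the absorption of the quartic term $C|\xi|^4t$ in the exponent into the Gaussian $e^{-\theta\xi^2t}$, which fixes how small $\varepsilon$ must be chosen.
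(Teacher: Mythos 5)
Your proof is correct and follows essentially the same route as the paper's: decompose $\hat G=\hat G_1+\hat G_2$, bound $\hat G_2$ by $Ce^{-\delta t}$ exactly as in the non-symmetric case, write $\hat G_1-\hat K^*$ as a sum of an exponent-remainder term (order $|\xi|^4 t$) and a projection-remainder term (order $|\xi|^2$), dominate each by a Gaussian after absorbing the $\mathcal O(|\xi|^4)t$ correction into the exponent for $\varepsilon$ small, and then apply Lemma~\ref{lem:main estimates}. The only superficial difference is the grouping of the two remainder terms (you attach the full $P_j(i\xi)$ to the exponent-remainder piece and drop the $e^{r_j t}$ factor from the projection-remainder piece, whereas the paper keeps $P_j^{(0)}+i\xi P_j^{(1)}$ in $I_1$ and $e^{\mathcal O(|\xi|^4)t}$ in $I_2$), which is algebraically equivalent and yields the same orders; you also correctly observe that since the eigenvalues are simple with vanishing nilpotent parts, the scalar bound $|e^z-1|\le|z|e^{|z|}$ suffices in place of Lemma~\ref{lem:nilpotent}.
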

\begin{proof}
	For $|\xi|<\varepsilon$, by Remark \ref{rem:low frequency 1}, Remark \ref{rem:low frequency 2} and Corollary \ref{cor:symmetry 0 group}, the solution to the system \eqref{eq:fundamental system} is given by $\hat G=\hat G_1+\hat G_2$ where $\hat G_2$ is given by \eqref{eq:G2 small} and
	\begin{equation*}
		\hat G_1(\xi,t)=\sum_{j=1}^me^{(-ic_j\xi-d_j\xi^2)t+\mathcal O(|\xi|^4)t}\bigl(P_j^{(0)}+i\xi P_j^{(1)}+\mathcal O(|\xi|^2)\bigr).
	\end{equation*}
Thus, similarly to the proof of the first estimate in \eqref{est:low}, we have $\hat G-\hat K^*=I_1+I_2+J$ where $J=\hat G_2$ and
\begin{align}
\label{eq:I_1 refined}	I_1&:=\sum_{j=1}^me^{(-ic_j\xi-d_j\xi^2)t}\left(e^{\mathcal O(|\xi|^4)t}-1\right)\bigl(P_j^{(0)}+i\xi P_j^{(1)}\bigr),\\
\label{eq:I_2 refined}	I_2&:=\sum_{j=1}^me^{(-ic_j\xi-d_j\xi^2)t+\mathcal O(|\xi|^4)t}\mathcal O(|\xi|^2).
\end{align}
Hence, similarly to before, there is a constant $c>0$ such that
\begin{equation*}
 	|I_1|\le Ce^{-c|\xi|^2t}|\xi|^4t \qquad\textrm{and}\qquad  |I_2|\le Ce^{-c|\xi|^2t}|\xi|^2.
\end{equation*}
Thus, together with \eqref{est:J small}, it implies that
\begin{equation*}
	\bigl\|\hat G-\hat K^*\bigr\|_{L^{r}}\le \bigl\|I_1\bigr\|_{L^{r}}+\bigl\|I_2\bigr\|_{L^{r}}+\bigl\|\hat J\bigr\|_{L^{r}}\le Ct^{-\frac12\frac{1}{r}-1},
\end{equation*}
for $|\xi|<\varepsilon$, $t\ge 1$ and $r\in[1,\infty]$. We finish the proof.
\end{proof}

Similarly, by recall the multipliers $m_j$ for $j=1,2,3$ and $m^j$ for $j=1,2$ with $\hat K$ is substituted by $\hat K^*$, we can also refine Proposition \ref{prop:multiplier 1} for $|x|\le Ct$ and Proposition \ref{prop:multiplier 2} for $|x|>Ct$.
\begin{proposition}[$|x|\le Ct$]\label{prop:multiplier 1 refined}
For $r\in[1,\infty]$, $m_j\in M_r$ with
\begin{equation}
	\|m_j\|_{M_r}\le Ct^{-1},\qquad j=1,2,3\textrm{ and }t\ge 1.
\end{equation}
\end{proposition}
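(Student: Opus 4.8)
The argument is the one of Proposition \ref{prop:multiplier 1}, the only new ingredient being that under {\bf C'} and {\bf S} the error in the low-frequency approximation of $\hat G$ vanishes to one order higher at $\xi=0$, by Corollary \ref{cor:symmetry 0 group} and Proposition \ref{prop:low refined}; this improvement propagates through every step and upgrades the final rate from $t^{-1/2}$ to $t^{-1}$. As in Proposition \ref{prop:multiplier 1} it suffices, by Young's inequality, to show $\|\mathcal F^{-1}(m_j)\|_{L^1}\le Ct^{-1}$ for $j=1,2,3$ and $t\ge1$, where now $m_j=\chi_j(\hat G-\hat K^*-\hat V)$.

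For $m_1$, recall from the proof of Proposition \ref{prop:low refined} that on $|\xi|<\varepsilon$ one has $\hat G-\hat K^*=I_1+I_2+J$, with $J=\hat G_2$ exponentially small and $I_1,I_2$ as in \eqref{eq:I_1 refined}, \eqref{eq:I_2 refined}, so that $|I_1|\le Ce^{-c|\xi|^2t}|\xi|^4t$ and $|I_2|\le Ce^{-c|\xi|^2t}|\xi|^2$ for some $c>0$. I would then repeat the contour-deformation performed in Proposition \ref{prop:multiplier 1}: write $z=e^{i\phi/2}\xi$ with $\phi=\arg(d_j)\in(-\pi/2,\pi/2)$, shift the path of integration to $\gamma_1\cup\gamma_2\cup\gamma_3$ with $\eta=\min\{|x-c_jt|/(2|d_j|t),\varepsilon/2\}$, and use that $|e^{\mathcal O(|\xi|^4)t}-1|\le C|\xi|^4t\,e^{\varepsilon''|\xi|^2t}$ with $\varepsilon''$ as small as needed (immediate since the remainder is holomorphic and $|z|=|\xi|\le\varepsilon$). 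The sole difference with Proposition \ref{prop:multiplier 1} is that the power of $|\xi|$ multiplying the Gaussian factor is raised by one, which turns the pointwise bound $Ct^{-1}e^{-|x-c_jt|^2/(ct)}$ obtained there into $|\mathcal F^{-1}(\chi_1I_1)(x,t)|+|\mathcal F^{-1}(\chi_1I_2)(x,t)|\le Ct^{-3/2}e^{-|x-c_jt|^2/(ct)}$, valid for $|x|\le Ct$ and $t\ge1$. Together with the exponential bound for $\chi_1J$ this gives $\|\mathcal F^{-1}(m_1)\|_{L^1}\le Ct^{-3/2}\cdot t^{1/2}+Ce^{-\delta t}\le Ct^{-1}$.

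The remaining two multipliers require no new computation. For $m_2$ on $\varepsilon\le|\xi|\le R$ one has $|\hat G|\le e^{-\theta|\xi|^2t/(1+|\xi|^2)}$ by {\bf D}, $|\hat K^*|\le Ce^{-\theta\varepsilon^2 t}$ up to a polynomial factor (since $\textrm{\normalfont Re}\,(d_j)\ge\theta$), and $|\hat V|\le Ce^{-\theta t/2}$ as in \eqref{est:Vintermediate}, so $\|\mathcal F^{-1}(m_2)\|_{L^1}\le Ce^{-\delta t}\le Ct^{-1}$. For $m_3$ on $|\xi|>R$ the high-frequency analysis of Proposition \ref{prop:fundamental solution standard type} is untouched by the choice of low-frequency kernel: the bound \eqref{est:K large} for $\hat K$ applies verbatim to $\hat K^*$ (same Gaussian decay $Ce^{-\theta R^2t}e^{-\theta|\xi|^2t/4}$ up to a polynomial in $\xi$), and the estimates \eqref{est:I_1 large}, \eqref{est:gamma1}-\eqref{est:gamma34} for $\hat G-\hat V$ are unchanged, so $\|\mathcal F^{-1}(m_3)\|_{L^1}\le Ce^{-\delta t}\le Ct^{-1}$.

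The only genuinely new point is the $m_1$ estimate, and there the work is bookkeeping: one has to check that the $\mathcal O(|\xi|^4)$ gain of Corollary \ref{cor:symmetry 0 group} is preserved uniformly along the deformed contour $\gamma_1\cup\gamma_2\cup\gamma_3$ — which is immediate from analyticity and $|z|=|\xi|\le\varepsilon$ — so that the extra half power of $t$ is genuine. I do not expect any serious obstacle beyond organizing the lengthy but routine contour estimates already carried out in Proposition \ref{prop:multiplier 1}.
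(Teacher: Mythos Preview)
Your proposal is correct and follows essentially the same route as the paper: repeat the contour-deformation argument of Proposition~\ref{prop:multiplier 1} with $\hat K$ replaced by $\hat K^*$, use the extra order of vanishing from Corollary~\ref{cor:symmetry 0 group} to upgrade the pointwise Gaussian bound on $\gamma_2$ from $t^{-1}$ to $t^{-3/2}$, and note that $m_2,m_3$ and the $\gamma_1,\gamma_3$ contributions are exponentially small (hence dominated by $t^{-3/2}e^{-|x-c_jt|^2/(ct)}$ on $|x|\le Ct$). The paper's own proof is exactly this, spelled out only for the $\gamma_2$ integral of $\mathcal F^{-1}(\chi_1 I_1)$.
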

\begin{proof}
Similarly to the proof of Proposition \ref{prop:multiplier 1}, we only need to consider $\mathcal F^{-1}(\chi_1I_1)$ and $\mathcal F^{-1}(\chi_1I_2)$ on $\gamma_2$ where $I_1,\,I_2$ are now given by \eqref{eq:I_1 refined}, \eqref{eq:I_2 refined} respectively and $\gamma_2$ is the same as \eqref{gamma2}. The others is bounded by $e^{-\delta t}$ for some $\delta >0$ and thus since $|x|\le Ct$, they are dominated by $t^{-\frac32}e^{-\frac{|x-c_jt|^2}{c|d_j|t}}$ for some $c>0$ and $t\ge 1$. 

Hence, noting that since $\left| e^{\mathcal O(|e^{-i\phi/2}z|^4)t}-1\right|\le C(|\textrm{\normalfont Re}\,z|+|\textrm{\normalfont Im}\,z|)^4te^{\varepsilon(|\textrm{\normalfont Re}\,z|+|\textrm{\normalfont Im}\,z|)^2t}$ for $z=e^{i\phi/2}\xi$ where $\xi\in[-\varepsilon,\varepsilon]$ and $\phi=\textrm{\normalfont arg}(d_j)\in (-\pi/2,\pi/2)$ for $j\in\{1,\dots,m\}$, on $\gamma_2$, we have
\begin{equation*}
\begin{aligned}
	\left|\mathcal F^{-1}(\chi_1I_1)\right|&\le C\sum_{j=1}^m\int_{-\varepsilon}^{\varepsilon}e^{-|x-c_jt|\eta \cos (\phi)}e^{-|d_j|\cos(\phi)(\zeta^2-\eta^2)t}e^{\varepsilon(\zeta+|\eta|)^2 t}(|\zeta|+|\eta|)^4td\zeta\\
	&\le C\sum_{j=1}^m \sum_{k=0}^4e^{-\frac{|x-c_jt|^2}{c|d_j|t} \cos (\phi)}\left(\dfrac{|x-c_jt|}{\sqrt{t}}\right)^{k}\int_{-\varepsilon}^{\varepsilon}e^{-\frac12|d_j|\cos(\phi)\zeta^2t}|\zeta|^{4-k}t^{1-\frac k2}d\zeta\\
		&\le C\sum_{j=1}^mt^{-\frac32}e^{-\frac{|x-c_jt|^2}{c'|d_j|t}}
\end{aligned}
\end{equation*}
for some $c,c'>0$ and $t\ge 1$. The estimate for $\mathcal F^{-1}(\chi_1I_2)$ is similarly.

Therefore, taking the $L^1$-norm in $x$ variable and using the Young inequality, we finish the proof.
\end{proof}

\begin{proposition}[$|x|>Ct$]\label{prop:multiplier 2 refined}
For $r\in[1,\infty]$, $m_j\in M_r$ with
\begin{equation}
	\|m^j\|_{M_r}\le Ct^{-1},\qquad j=1,2\textrm{ and }t\ge 1.
\end{equation}
\end{proposition}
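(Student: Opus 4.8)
The plan is to mimic the proof of Proposition~\ref{prop:multiplier 2}: by Young's inequality one has $\|m^j\|_{M_r}\le\|\mathcal F^{-1}(m^j)\|_{L^1}$ for every $r\in[1,\infty]$, so I would only need to bound the $L^1$-norm, in the $x$-variable, of $\mathcal F^{-1}(m^j)$ on the region $|x|>Ct$, where $C$ is taken larger than $\max_k|c_k|$ and $\max_j|\alpha_j|$, and where in the refined setting $m^1=\hat G-\hat V$ and $m^2=\hat K^*$.

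For $m^1=\hat G-\hat V$ I would simply invoke the computation of Proposition~\ref{prop:multiplier 2} unchanged: since $\hat G(\xi,t)=e^{E(i\xi)t}$ is entire in $\xi$, $\hat V$ is holomorphic, and the high-frequency representation \eqref{eq:G large} is available along the translated path, I would deform the contour from $\mathbb R$ to $\mathbb R+i\,x/t$ through the three segments $\gamma_1,\gamma_2,\gamma_3$ and reproduce \eqref{est:gamma1}--\eqref{est:gamma34} to obtain $|\mathcal F^{-1}(m^1)(x,t)|\le Ce^{-|x|^2/(ct)}$. On $|x|>Ct$ this is $\le Ce^{-\delta t}e^{-|x|^2/(2ct)}$ for some $\delta>0$, and integrating in $x$ gives $\|\mathcal F^{-1}(m^1)\|_{L^1}\le Ce^{-\delta t}\sqrt t\le Ct^{-1}$ for $t\ge1$; note this step uses neither {\bf C'} nor {\bf S}, and in fact delivers decay faster than any power.

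For $m^2=\hat K^*$ I would argue analogously, now directly on the explicit Gaussian kernel. Writing $\mathcal F^{-1}(\hat K^*)(x,t)=\sum_{j=1}^m\int_{\mathbb R}e^{i(x-c_jt)\xi-d_j\xi^2t}\bigl(P_j^{(0)}+i\xi P_j^{(1)}\bigr)\,d\xi$, each summand is entire and rapidly decaying in every horizontal strip, so the contour can be moved at will. Since $\mathrm{Re}\,d_j\ge\theta>0$ by Proposition~\ref{prop:low frequency 1}, I would first rotate via $z=e^{i\phi_j/2}\xi$ with $\phi_j=\arg(d_j)\in(-\pi/2,\pi/2)$ and then shift the contour vertically by the amount completing the square, producing a weight controlled by $e^{-(x-c_jt)^2/(ct)}$, exactly as in the estimates \eqref{est:gamma1small}--\eqref{est:gamma3small} and \eqref{est:gamma2asmall} already performed for $\mathcal F^{-1}(\chi_1I_1)$; the factor $P_j^{(0)}+i\xi P_j^{(1)}$ only introduces an algebraic weight in the shifted variable, absorbed into the Gaussian at a cost of at most $t^{-1/2}$. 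Since $|x|>Ct$ forces $|x-c_jt|\ge(C-\max_k|c_k|)t$, the weight is $\le Ce^{-\delta t}e^{-(x-c_jt)^2/(2ct)}$ for some $\delta>0$, and integrating in $x$ yields $\|\mathcal F^{-1}(\hat K^*)\|_{L^1}\le Ce^{-\delta t}\sqrt t\le Ct^{-1}$. Combining the two cases via Young's inequality would finish the proof.

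I expect the only genuine work to be the contour deformation for $\hat K^*$, i.e.\ checking that after the rotation $\xi\mapsto e^{-i\phi_j/2}z$ and the vertical shift the connecting segments are exponentially negligible and the horizontal segment carries precisely the Gaussian weight $e^{-(x-c_jt)^2/(ct)}$; but this is a verbatim transcription of \eqref{est:gamma1small}--\eqref{est:gamma3small}, and the underlying observation is elementary — on $|x|>Ct$ every quantity in sight is already exponentially small in $t$, so the target rate $t^{-1}$ is reached with room to spare.
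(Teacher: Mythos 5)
Your argument is correct and follows the same route as the paper's: for $m^1=\hat G-\hat V$ you recycle the contour deformation from Proposition \ref{prop:multiplier 2} verbatim, and for $m^2=\hat K^*$ you recycle the rotation-and-shift argument from the small-frequency multipliers, then both times exploit that $|x|>Ct$ forces the Gaussian kernel to be $\mathcal O(e^{-\delta t})$, which dominates $t^{-1}$ with room to spare. The paper compresses this into one line (``$e^{-|x|^2/t}$ is dominated by $t^{-3/2}e^{-|x|^2/(2t)}$ since $|x|>Ct$''), and your observation that neither {\bf C'} nor {\bf S} is actually needed in this region is accurate and worth noting.
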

\begin{proof}
The proof is the same as in Proposition \ref{prop:multiplier 2} and the fact that $e^{-\frac{|x|^2}{t}}$ is dominated by $t^{-\frac32}e^{-\frac{|x|^2}{2t}}$ since $|x|>Ct$ and $t\ge 1$. The proof is done.
\end{proof}
\section{Proof of main results}\label{sec:proofs of main theorems}
Recall the well-known inequality
\begin{lemma}[Interpolation inequality]
Let $(p_j,q_j)_{j\in\{0,1\}}$ be two elements
of $[1,\infty]^2$. Consider a linear operator $T$ which continuously
maps $L^{p_j}$ into $L^{q_j}$ for $j\in\{0,1\}$.
For any $\theta\in[0,1]$, if
\begin{equation*}
	\left(\dfrac{1}{p_{\theta}},\dfrac{1}{q_{\theta}}\right):=(1-\theta)\left(\dfrac{1}{p_0},
	\dfrac{1}{q_0}\right)+\theta\left(\dfrac{1}{p_1},\dfrac{1}{q_1}\right),
\end{equation*}
then $T$ continuous maps $L^{p_{\theta}}$ into $L^{q_{\theta}}$
and $\|T\|_{\mathcal{L}(L^{p_{\theta}};L^{q_{\theta}})}\le\|T\|_{\mathcal{L}(L^{p_0};L^{q_0})}^{1-\theta}\|T\|_{\mathcal{L}(L^{p_1};L^{q_1})}^{\theta}$.
\end{lemma}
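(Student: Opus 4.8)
The plan is to prove this by complex interpolation, i.e. the classical Riesz--Thorin scheme: one builds from $T$ a holomorphic function on a vertical strip and applies Hadamard's three-lines theorem. Throughout write $M_j:=\|T\|_{\mathcal L(L^{p_j};L^{q_j})}$ for $j\in\{0,1\}$, and let $q_\theta'$ be the conjugate exponent of $q_\theta$. First I would reduce the operator bound to a bilinear one by duality: since simple functions are dense in $L^{p_\theta}$ and $L^{q_\theta}=(L^{q_\theta'})^*$ (the case $p_\theta=\infty$ being set aside, see below), it suffices to show
\[
	\Bigl|\int_{\mathbb R}(Tf)(x)\,g(x)\,dx\Bigr|\le M_0^{1-\theta}\,M_1^{\theta}
\]
for all simple $f,g$ with $\|f\|_{L^{p_\theta}}\le1$ and $\|g\|_{L^{q_\theta'}}\le1$.

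Next, on the closed strip $\overline\Sigma:=\{z\in\mathbb C:0\le\mathrm{Re}\,z\le1\}$ I would introduce $1/p(z):=(1-z)/p_0+z/p_1$ and $1/q'(z):=(1-z)/q_0'+z/q_1'$, and, writing $f=\sum_j|a_j|e^{i\alpha_j}\chi_{A_j}$ and $g=\sum_k|b_k|e^{i\beta_k}\chi_{B_k}$ over finitely many disjoint sets of finite measure, the analytic families $f_z:=\sum_j|a_j|^{p_\theta/p(z)}e^{i\alpha_j}\chi_{A_j}$ and $g_z:=\sum_k|b_k|^{q_\theta'/q'(z)}e^{i\beta_k}\chi_{B_k}$. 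Then $\Phi(z):=\int_{\mathbb R}(Tf_z)g_z\,dx=\sum_{j,k}|a_j|^{p_\theta/p(z)}|b_k|^{q_\theta'/q'(z)}e^{i(\alpha_j+\beta_k)}\int_{\mathbb R}(T\chi_{A_j})\chi_{B_k}\,dx$ is a finite linear combination of exponentials of $z$, hence entire and bounded on $\overline\Sigma$, and since $p(\theta)=p_\theta$, $q'(\theta)=q_\theta'$ one has $f_\theta=f$, $g_\theta=g$, so $\Phi(\theta)=\int_{\mathbb R}(Tf)g\,dx$.

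The core of the argument is the bound on the two vertical edges. On $\mathrm{Re}\,z=0$ one computes $\mathrm{Re}(1/p(it))=1/p_0$, hence $\|f_{it}\|_{L^{p_0}}^{p_0}=\bigl\||f|^{p_\theta}\bigr\|_{L^1}=\|f\|_{L^{p_\theta}}^{p_\theta}\le1$ and likewise $\|g_{it}\|_{L^{q_0'}}\le1$, so that $|\Phi(it)|\le\|Tf_{it}\|_{L^{q_0}}\|g_{it}\|_{L^{q_0'}}\le M_0\|f_{it}\|_{L^{p_0}}\|g_{it}\|_{L^{q_0'}}\le M_0$. The identical computation on $\mathrm{Re}\,z=1$ gives $|\Phi(1+it)|\le M_1$. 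Hadamard's three-lines theorem applied to $\Phi$ then yields $|\Phi(\theta)|\le M_0^{1-\theta}M_1^{\theta}$, which is the displayed bilinear estimate; taking the supremum over admissible $g$ gives $\|Tf\|_{L^{q_\theta}}\le M_0^{1-\theta}M_1^{\theta}$ for simple $f$, and density concludes.

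I expect the real work to lie not in the main line but in the endpoint bookkeeping. When some of $p_0,p_1,q_0,q_1$ — or $p_\theta,q_\theta$ — equals $\infty$ the definitions of $p(z)$, of the families $f_z,g_z$, and the duality step must be adapted (if $p_\theta=\infty$ then necessarily $p_0=p_1=\infty$, one takes $f_z=f$ and interpolates only in the target exponent, extending from simple functions by a truncation argument; if $q_\theta=\infty$ one works with $g\in L^1$ directly). One must also verify that $\Phi$ is continuous up to $\partial\Sigma$ and bounded there, so that the three-lines theorem genuinely applies — these checks are routine but are where all the care is needed.
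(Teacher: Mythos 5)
The paper does not prove this lemma; it simply recalls it as the classical Riesz--Thorin interpolation theorem and cites nothing, treating it as standard. Your sketch is exactly the canonical complex-interpolation proof (analytic families of simple functions plus Hadamard's three-lines theorem), and it is correct as far as it goes, including the observation that $\mathrm{Re}(1/p(it))=1/p_0$ forces $\|f_{it}\|_{L^{p_0}}\le1$ on the left edge, and the remark that when $p_\theta=\infty$ one necessarily has $p_0=p_1=\infty$ so one interpolates only in the target exponent. The points you flag as needing care — continuity of $\Phi$ on the closed strip, the duality step when $q_\theta=\infty$, and extension from simple functions by truncation/density — are indeed where a fully written-out proof would have to do its bookkeeping, but none of them is a gap in the idea.
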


We then introduce detailed proofs for Theorem \ref{theo:standard type} and Theorem \ref{theo:standard type symmetry}. 

\begin{proof}[Proof of Theorem \ref{theo:standard type}]
Let $u$ be the solution to \eqref{eq:original hyperbolic system}, recalling that $U=\sum_{j=1}^hU_j$ where $U_j$ is the solution to \eqref{eq:parabolic system} for $j\in\{1,\dots,h\}$ and $V=Q\sum_{j=1}^s V_j$ where $V_j$ is the solution to \eqref{eq:hyperbolic system} for $j\in\{1,\dots,s\}$. Then, we have
\begin{align*}
	u-U-V=\mathcal{F}^{-1}\bigl(\hat G-\hat K-\hat V\bigr)*u_{0}.
\end{align*}
On the other hand, let $\chi$ be the characteristic function, we have
\begin{equation*}
	\begin{aligned}
		\mathcal F^{-1}\bigl(\hat G-\hat K-\hat V\bigr)&=\mathcal F^{-1}\bigl[\bigl(\hat G-\hat K-\hat V\bigr)						\bigl(\chi_{[0,\varepsilon)}+\chi_{[\varepsilon,R]}+\chi_{(R,\infty)}\bigr)(|\xi|)\bigr]\\
		&=\mathcal F^{-1}\bigl[\bigl(\hat G-\hat K-\hat V\bigr)\bigl(\chi_{[0,\varepsilon)}+\chi_{[\varepsilon,R]}\bigr)(|\xi|)\bigr]\\
		&\hskip.25cm+\mathcal F^{-1}\bigl[\bigl(\hat G-\hat V\bigr)\chi_{(R,\infty)}(|\xi|)\bigr]-\mathcal F^{-1}\bigl[\hat K\chi_{(R,\infty)}(|\xi|)\bigr].
	\end{aligned}
\end{equation*}
Thus, since $\mathcal F^{-1}:L^1\to L^\infty$, we have
\begin{equation*}
	\begin{aligned}
		\bigl\|\mathcal F^{-1}\bigl(\hat G-\hat K-\hat V\bigr)\bigr\|_{L^{\infty}}
		&\le C\left[ \bigl\|\bigl(\hat G-\hat K-\hat V\bigr)\bigl(\chi_{[0,\varepsilon)}+\chi_{[\varepsilon,R]}\bigr)\bigr\|_{L^1}\right. \\
		&\left.+\bigl\|\hat K\chi_{(R,\infty)}\bigr\|_{L^1}\right]+\bigl\| \mathcal F^{-1}\bigl[\bigl(\hat G-\hat V\bigr)\chi_{(R,\infty)}(|\xi|)\bigr]\bigr\|_{L^{\infty}}.
	\end{aligned}
\end{equation*}
Hence, by the estimates \eqref{est:low}, \eqref{est:intermediate}, \eqref{est:high 1} and \eqref{est:high 2} in Proposition \ref{prop:fundamental solution standard type}, we obtain
\begin{equation*}
	\bigl\| u-U-V\bigr\|_{L^\infty}\le Ct^{-1}\|u_0\|_{L^1}, \qquad t\ge 1.
\end{equation*}
Furthermore, from Proposition \ref{prop:multiplier 1} and Proposition \ref{prop:multiplier 2}, for all $r\in[1,\infty]$, we also have
\begin{equation*}
	\bigl\| u-U-V\bigr\|_{L^r}\le Ct^{-\frac12}\|u_0\|_{L^r}, \qquad t\ge 1.
\end{equation*}
Therefore, by the interpolation inequality, we obtained the desired results.

The proof of \eqref{est:parabolic solution-hyperbolic solution} is similar and we finish the proof.
\end{proof}

\begin{proof}[Proof of Theorem \ref{theo:standard type symmetry}]
Similarly to before and from Propositions \ref{prop:low refined}, \ref{prop:multiplier 1 refined} and \ref{prop:multiplier 2 refined}. The proof is done.
\end{proof}

\section*{Appendix}

\subsection*{Eigenprojection computation}
In this subsection, we introduce a useful tool in order to compute the eigenprojection associated with a semi-simple eigenvalue of a matrix based on the determinant of this matrix and its minors. We start with some definitions

\noindent
-- a {\it set of indices} $\mathcal{I}$ is a set $\mathcal{I}=\{k_1<\dots < k_\ell\}$ with $k_p\in\{1,\dots,n\}$ for any $p$;\\
-- an {\it index-transformation} $\chi$ is an injective map from a set of indices $\mathcal{I}$ to $\{1,\dots,n\}$.\par
Then, we introduce some additional notations:\\
-- given two matrices $A, B\in  M_n(\mathbb R)$, a set of indices $\mathcal{I}=\{i\}$ and
an index-transformation $\chi$, we denote by $\Phi(A,B;\mathcal{I},\chi)$ the matrix obtained by
substituting the $i$-th column of $A$ by the $\chi(i)$-th column of $B$.\\
-- given sets of indices $\mathcal I,\mathcal K,\mathcal L$ satisfying $\mathcal K\subseteq \mathcal I$ and $|\mathcal K|=|\mathcal L|$, where $|\mathcal K|$ and $|\mathcal L|$ are the cardinalities of $\mathcal K$ and $\mathcal L$ respectively. A map $\chi_{\mathcal K\to \mathcal L}$ is an injective map from $\mathcal I$ to $\{1,\dots,n\}$ defined by $\chi_{\mathcal K\to \mathcal L}(k)=k$ if $k\not\in \mathcal K$; and there is a unique $\ell \in\mathcal L$ such that $\chi_{\mathcal K\to \mathcal L}(k)=\ell$ for each $k\in\mathcal K$.

We then set $[A]^k$ the $n\times n$ matrix with components defined by
\begin{equation*}
	[A]^k_{ij}:=\sum \det\Phi( A,I; \mathcal{I},\chi_{i\to j}),
	\qquad i,j\in\{1,\dots,n\},
\end{equation*}
where the sum is made on sets of indices $\mathcal {I}$ containing $i$ and with cardinality $|\mathcal{I}|=k+1$. If $k=0$, then $[A]^0=\textrm{\normalfont adj}\,(A)$. Hence, the above notation can be seen as an extended version of the adjunct of the matrix $A$.

One sets
\begin{align}
\label{func:P}	\mathbb{P}_k(A)&:=\dfrac{(k+1)[A]^{k}}{\textrm{\normalfont Tr}\, [A]^{k}},\\
\label{func:S}	\mathbb{S}_k(A)&:=\dfrac{(k+1)(k+2)[A]^{k+1}\textrm{\normalfont Tr}\,[A]^{k}-(k+1)^2[A]^{k}\textrm{\normalfont Tr}\, [A]^{k+1}}{(k+2)\bigl(\textrm{\normalfont Tr}\,[A]^k)^2}.
\end{align}

Let $\Gamma$ be an oriented closed curve enclosing $0$ except for the other eigenvalues of $A$ in the resolvent set $\rho(A)$, one defines
\begin{equation}\label{eq:eigenprojections}
	P:=-\dfrac{1}{2\pi i}\int_{\Gamma}(A-zI)^{-1}\,dz\qquad \textrm{and}\qquad S:=\dfrac{1}{2\pi i}\int_{\Gamma}z^{-1}(A-zI)^{-1}\,dz.
\end{equation}
The matrix $P\in M_n(\mathbb R)$ is called the eigenprojection associated with $0$ and the matrix $S\in M_n(\mathbb R)$ is called the reduced resolvent coefficient associated with $0$.
\begin{proposition}\label{prop:eigenprojection computation}
Let $A\in M_n(\mathbb R)$ and $0 \in\sigma(A)$ is semi-simple with algebraic multiplicity $m\ge 1$, then the eigenprojection associated with $a$ has the formula
\begin{equation}\label{eq:eigenprojection formula}
	P=\mathbb P_{m-1}(A)\qquad \textrm{and}\qquad S=\mathbb S_{m-1}(A).
\end{equation}
\end{proposition}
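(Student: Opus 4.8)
The plan is to reduce the Cauchy integrals in \eqref{eq:eigenprojections} to the Laurent expansion of the resolvent $(A-zI)^{-1}=\operatorname{adj}(A-zI)/\det(A-zI)$ at $z=0$, and then to read off $P$ and $S$ from the Taylor data of $\operatorname{adj}(A-zI)$ and $\det(A-zI)$, which is precisely what the symbols $[A]^{k}$ encode.

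First I would establish the purely algebraic identity
\begin{equation*}
	\operatorname{adj}(A-zI)=\sum_{k=0}^{n-1}(-z)^{k}\,[A]^{k},
\end{equation*}
by expanding, for each pair $i,j$, the minor defining $\operatorname{adj}(A-zI)_{ij}$ through the multilinearity of the determinant in the columns $A_{\ell}-z e_{\ell}$ of $A-zI$: collecting the terms in which precisely the columns of a set $\mathcal I\ni i$ of cardinality $k+1$ carry the factor $-z$ produces exactly $(-1)^{k}$ times the signed minor $\det\Phi(A,I;\mathcal I,\chi_{i\to j})$, i.e.\ the minor of $A$ obtained by deleting the rows $(\mathcal I\setminus\{i\})\cup\{j\}$ and the columns $\mathcal I$. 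The two extreme cases $[A]^{0}=\operatorname{adj}(A)$ and $[A]^{n-1}=I$ serve as a consistency check on the sign. Differentiating $\det(A-zI)$ and using Jacobi's formula $\tfrac{d}{dz}\det(A-zI)=-\operatorname{Tr}\operatorname{adj}(A-zI)$ then gives, for the Taylor coefficients $c_{k}$ in $\det(A-zI)=\sum_{k}c_{k}z^{k}$, the relation $c_{k}=\tfrac{(-1)^{k}}{k}\operatorname{Tr}[A]^{k-1}$ for $k\ge 1$. Since $0$ is an eigenvalue of algebraic multiplicity $m$, we have $c_{0}=\dots=c_{m-1}=0$ and $c_{m}\neq 0$; hence $\operatorname{Tr}[A]^{0}=\dots=\operatorname{Tr}[A]^{m-2}=0$ and $\operatorname{Tr}[A]^{m-1}\neq 0$, so $\mathbb{P}_{m-1}(A)$ and $\mathbb{S}_{m-1}(A)$ are well defined.

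Next I would bring in semisimplicity. Because $0$ is semi-simple the nilpotent part $AP$ vanishes and the resolvent has only a simple pole at $z=0$, so
\begin{equation*}
	(A-zI)^{-1}=-\frac{1}{z}P+S+z\,S^{2}+\mathcal O(z^{2}),\qquad |z|\to 0,
\end{equation*}
with $P$ and $S$ as in \eqref{eq:eigenprojections}. Multiplying this expansion by $\det(A-zI)=c_{m}z^{m}+c_{m+1}z^{m+1}+\mathcal O(z^{m+2})$ and comparing with $\operatorname{adj}(A-zI)=\sum_{k}(-z)^{k}[A]^{k}$, the coefficients of $z^{0},\dots,z^{m-2}$ force $[A]^{0}=\dots=[A]^{m-2}=0$, while the coefficients of $z^{m-1}$ and $z^{m}$ yield
\begin{equation*}
	(-1)^{m-1}[A]^{m-1}=-c_{m}P,\qquad (-1)^{m}[A]^{m}=c_{m}S-c_{m+1}P.
\end{equation*}

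Finally I would just solve these two relations. Substituting $c_{m}=\tfrac{(-1)^{m}}{m}\operatorname{Tr}[A]^{m-1}$ into the first gives $P=m[A]^{m-1}/\operatorname{Tr}[A]^{m-1}=\mathbb{P}_{m-1}(A)$; inserting this together with $c_{m+1}=\tfrac{(-1)^{m+1}}{m+1}\operatorname{Tr}[A]^{m}$ into the second and simplifying gives
\begin{equation*}
	S=\frac{m(m+1)[A]^{m}\operatorname{Tr}[A]^{m-1}-m^{2}[A]^{m-1}\operatorname{Tr}[A]^{m}}{(m+1)\bigl(\operatorname{Tr}[A]^{m-1}\bigr)^{2}}=\mathbb{S}_{m-1}(A).
\end{equation*}
The one genuinely technical step is the combinatorial one: matching, with the correct signs, the minors $\det\Phi(A,I;\mathcal I,\chi_{i\to j})$ thrown up by the multilinear expansion of $\operatorname{adj}(A-zI)$ against the definition of $[A]^{k}$, where the index-transformations $\chi_{i\to j}$ and the sign conventions have to be tracked carefully. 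Everything after that is residue calculus and elementary algebra.
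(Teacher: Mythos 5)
Your proof is correct and rests on the same core mechanism as the paper's: write the resolvent as $\operatorname{adj}(A-zI)/\det(A-zI)$, expand both numerator and denominator in terms of the $[A]^{k}$ and $\operatorname{Tr}[A]^{k}$, compare with the Laurent expansion of the resolvent around $z=0$, and solve for $P$ and $S$ from the lowest-order coefficients. The difference is in how the two Taylor identities are established. You derive $\operatorname{adj}(A-zI)=\sum_{k}(-z)^{k}[A]^{k}$ directly from multilinearity of the determinant in the columns, and then obtain the relation $c_{k}=\tfrac{(-1)^{k}}{k}\operatorname{Tr}[A]^{k-1}$ for the coefficients of $\det(A-zI)$ via Jacobi's formula. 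The paper instead proves a more general combinatorial formula for derivatives of parameterized determinants (Lemma~\ref{mainlemma}) and deduces Corollary~\ref{adjoint}, which packages exactly these two identities in the form $[A]^{h}=(h!)^{-1}(\operatorname{adj}(A+xI))^{(h)}|_{x=0}$ and $\operatorname{Tr}[A]^{h}=(h!)^{-1}(\det(A+xI))^{(h+1)}|_{x=0}$. Your route is more direct and self-contained for this proposition; the paper's lemma is stated and proved in greater generality than is actually used. A small additional gain in your argument is that you invoke semi-simplicity explicitly to truncate the Laurent expansion to a simple pole before matching coefficients, whereas the paper's proof leaves this reduction implicit (it displays the full expansion with the nilpotent terms and then simply "equates two sides"). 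The final linear system for $P$ and $S$ and the resulting algebra are identical to the paper's.
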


Before going to the proof of Proposition \ref{prop:eigenprojection computation}, we introduce the following results.
\begin{lemma}\label{mainlemma} Let $f(x):=\det\Phi(A+xB,C;\mathcal{I},\chi_{\mathcal{K}\to \mathcal{L}} )$ for $x\in\mathbb{C}$. By setting
\begin{equation*}
	\mathcal{J}:=\bigl\{h \notin \mathcal{I}:\exists j \in \chi_{\mathcal K\to \mathcal L}(\mathcal I), c_j
= b_h\in \textrm{\normalfont col}(B) \bigr\}
\end{equation*}
where $\textrm{\normalfont col}(B)$ is the column space of $B$, for any non negative integer $m$ satisfying $m\le n-|\mathcal I|$, the following holds
\begin{equation*}\label{dfm}
	f^{(m)}(x)=m!\sum\det \Phi(\Phi(A+xB,C;\mathcal{I},
	\chi_{\mathcal{K}\to \mathcal{L}}),B;\mathcal{M},\chi_{\mathcal{M}\to \mathcal M})
\end{equation*}
where the sum is made on the set of indices $\mathcal {M}$ with cardinality $|\mathcal{M}|=m$
and $\mathcal{M}\cap( \mathcal{I}\cup\mathcal{J})=\emptyset$.

\end{lemma}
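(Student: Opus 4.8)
The plan is to argue by induction on $m$, the engine being the multilinearity of $\det$ in its columns. Write $M(x):=\Phi(A+xB,C;\mathcal I,\chi_{\mathcal K\to\mathcal L})$; by the definition of $\Phi$, for $i\notin\mathcal I$ the $i$-th column of $M(x)$ equals $a_i+xb_i$, whereas for $i\in\mathcal I$ the $i$-th column equals $c_{\chi_{\mathcal K\to\mathcal L}(i)}$, which is independent of $x$. The base case $m=0$ is immediate: the only index set of cardinality $0$ is $\emptyset$, and $\Phi(M(x),B;\emptyset,\chi)=M(x)$, so the right-hand side is $0!\,\det M(x)=f(x)$.

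For the inductive step assume the formula at level $m$, that is $f^{(m)}(x)=m!\sum_{\mathcal M}\det\Phi\bigl(M(x),B;\mathcal M,\chi_{\mathcal M\to\mathcal M}\bigr)$ where $\mathcal M$ ranges over index sets with $|\mathcal M|=m$ and $\mathcal M\cap(\mathcal I\cup\mathcal J)=\emptyset$. Differentiating a single summand in $x$ and expanding along columns, only the columns with index $i\notin\mathcal I\cup\mathcal M$ depend on $x$, and differentiating the $i$-th of them replaces that column by $b_i$; the matrix so obtained is precisely $\Phi\bigl(M(x),B;\mathcal M\cup\{i\},\chi_{\mathcal M\cup\{i\}\to\mathcal M\cup\{i\}}\bigr)$. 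Hence $f^{(m+1)}(x)=m!\sum_{\mathcal M}\sum_{i\notin\mathcal I\cup\mathcal M}\det\Phi\bigl(M(x),B;\mathcal M\cup\{i\},\chi_{\mathcal M\cup\{i\}\to\mathcal M\cup\{i\}}\bigr)$.

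The decisive point --- which is exactly why $\mathcal J$ rather than merely $\mathcal I$ occurs in the statement --- is that every term with $i\in\mathcal J$ vanishes. Indeed, if $i\in\mathcal J$ then $b_i=c_j$ for some $j=\chi_{\mathcal K\to\mathcal L}(i_0)$ with $i_0\in\mathcal I$; since $i_0\notin\mathcal M\cup\{i\}$ the $i_0$-th column of $\Phi\bigl(M(x),B;\mathcal M\cup\{i\},\cdot\bigr)$ is still $c_j=b_i$, so that matrix has two identical columns (the $i$-th and the $i_0$-th) and null determinant. Thus the inner sum may be restricted to $i\notin\mathcal I\cup\mathcal J\cup\mathcal M$, and writing $\mathcal M':=\mathcal M\cup\{i\}$ produces each index set $\mathcal M'$ of cardinality $m+1$ with $\mathcal M'\cap(\mathcal I\cup\mathcal J)=\emptyset$ exactly $m+1$ times (once for each choice of $i\in\mathcal M'$). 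Therefore $f^{(m+1)}(x)=m!\,(m+1)\sum_{\mathcal M'}\det\Phi\bigl(M(x),B;\mathcal M',\chi_{\mathcal M'\to\mathcal M'}\bigr)=(m+1)!\sum_{\mathcal M'}\det\Phi\bigl(M(x),B;\mathcal M',\chi_{\mathcal M'\to\mathcal M'}\bigr)$, which is the claim at level $m+1$. (When $n-|\mathcal I|-|\mathcal J|<m\le n-|\mathcal I|$ both sides are $0$, since $\det M(x)$ has degree at most $n-|\mathcal I|-|\mathcal J|$ in $x$, in agreement with the stated range.)

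I expect the only real difficulty to be bookkeeping: one must verify cleanly that replacing the $i$-th column of $\Phi(M(x),B;\mathcal M,\chi_{\mathcal M\to\mathcal M})$ by $b_i$ yields $\Phi(M(x),B;\mathcal M\cup\{i\},\chi_{\mathcal M\cup\{i\}\to\mathcal M\cup\{i\}})$, and then correctly count the multiplicity $m+1$ in the re-indexing step. The single genuinely substantive ingredient is the vanishing of the terms with $i\in\mathcal J$ via the repeated-column observation.
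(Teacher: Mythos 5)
Your proof is correct, and it reaches the result by a genuinely different route from the paper. The paper computes $f^{(m)}$ in one stroke via the Leibniz rule for multilinear functions: $f(x)=\bigwedge_h f_h(x)$, so $f^{(m)}$ is a sum indexed by the ways of distributing $m$ derivatives among the $n$ columns, which the authors encode by $\{0,1\}$-matrices $S$ with prescribed column sums. They then observe that any distribution placing two derivatives on one column, or one derivative on a constant column, yields a zero wedge; the surviving distributions are exactly the $m!$ orderings of a set $\mathcal M$ of $m$ non-constant columns, and terms with $\mathcal M\cap\mathcal J\ne\emptyset$ die by the repeated-column argument. You instead run an induction on $m$ using only single-column differentiation, and do the $\mathcal J$-vanishing and the re-indexing count ($m+1$ preimages of each $\mathcal M'$, whence the factor $(m+1)!$) at each inductive step. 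The two arguments share the same two substantive ingredients --- multilinearity of $\det$ in its columns and vanishing on repeated columns --- but your induction avoids the combinatorial bookkeeping with $S\in\{0,1\}^{n\times m}$ and the partition into $\mathcal S_{\mathcal M}$, at the cost of verifying the $\Phi$-composition identity and the $(m+1)$-fold multiplicity at each step, which you do correctly. One minor caveat: your closing parenthetical that $\det M(x)$ has degree at most $n-|\mathcal I|-|\mathcal J|$ is really a corollary of the lemma rather than an independent input, so it should not be read as justifying the range of $m$; fortunately your induction already covers the whole stated range $0\le m\le n-|\mathcal I|$ without invoking that degree bound (when the sum at level $m+1$ is empty, both sides are $0$ automatically).
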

\begin{proof} By definition, we have
\begin{equation*}
	f(x)=\bigwedge_{h=1}^{n}f_h(x)\qquad \textrm{where}\qquad f_h(x):=\begin{cases}(a_h+b_hx) & \textrm{if } h\notin \mathcal I,\\ c_{\chi_{\mathcal K\to \mathcal L}(h)}&\textrm{if } h\in \mathcal I.
	\end{cases}
\end{equation*}
Hence, the derivative of order $m\in\mathbb N$ of $f$ satisfies
\begin{equation*}\label{derivative of order m}
	f^{(m)}(x)=\sum \bigwedge_{h=1}^{n}f_h^{(s_h)}(x)\,\, \textrm{where }\,f_h^{(s_h)}(x):=\begin{cases}(a_h+b_h x)^{(s_h)} & \textrm{if } h \notin \mathcal I,\\ c_{\chi_{\mathcal K\to \mathcal L}(h)}^{(s_h )} &\textrm{if } h\in \mathcal I
	\end{cases}
\end{equation*}
where $s_h:=s_h^1+\dots+s_h^m\in\mathbb N$ with $s_h^\ell \in\{0,1\}$ for all $\ell\in\{1,\dots,m\}$ and if we denote by $S\in\{1,0\}^{n\times m}$ the matrix defined by $S_{h\ell}:=s_h^\ell$, the sum is made on the set 
\begin{equation*}
	\mathcal S:=\left\{ S : s_1+\dots+s_n=m\right\}.
\end{equation*}
It means that $f^{(m)}(x)$ is the sum of a finite number of determinants, where the determinants are generated by the elements $S$ of $\mathcal S$ and they are given by $D_S:=\bigwedge_{h=1}^{n}f_h^{(s_h)}(x)$.

Moreover, for any matrix $S\in\mathcal S$, if $s_h\ge 2$ for
$h\notin \mathcal I$, then $(a_h+b_hx)^{(s_h)}=O_{n\times 1}$ and if $s_h \ge 1$ for $h \in\mathcal I$, $c_{\chi_{\mathcal K\to \mathcal L}(h)}^{(s_h)}=O_{n\times 1}$; and thus, the determinants related to these cases are zero. Thus, due to the condition $s_1+\dots+s_n=m$ where $m\le n-|\mathcal I|$, we can introduce a partition for $\mathcal S$ such that its elements denoted by $\mathcal S_{\mathcal M}$ are associated with index-sets $\mathcal M:=\{h_1,\dots,h_m\}\subset \{1,\dots,n\}\backslash \mathcal I$ and they are given by
\begin{equation*}
	\mathcal S_{\mathcal M}:=\left\{S: s_h=\delta_{\mathcal M}(h) \textrm{ for all } h=1,\dots,n\right\}
\end{equation*}
where
\begin{equation*}\label{condition K}
	\delta_{\mathcal M}(h):=\left\{\begin{matrix} 1&\textrm{ if } h \in\mathcal M,\\
	 0& \textrm{ if }h \notin\mathcal M.\end{matrix}\right.
\end{equation*}
In particular, for any $\mathcal M$, if $S$ and $S'$ belong to $\mathcal S_{\mathcal M}$, one has $D_S=D_{S'}$ since $s_h=\delta_{\mathcal M}(h)=s'_h$ for all $h \in\{1,\dots,n\}$ where $s_h$ and $s'_h$ are the sum of the elements of the $h$-th rows of the matrices $S$ and $S'$ respectively. On the other hand, we have
\begin{equation*}
\begin{aligned}
	D_S&=\bigwedge_{h=1}^{n}f_h^{(s_h)}(x)\qquad\textrm{where}\qquad f_h^{(s_h)}(x):=\begin{cases}b_h & \textrm{if } h \in \mathcal M,\\
	(a_h+b_h x) &\textrm{if } h\notin \mathcal M\cup \mathcal I, \\ c_{\chi_{\mathcal K\to \mathcal L}(h)} &\textrm{if } h\in \mathcal I
	\end{cases}\\
	&= \det \Phi\bigl(\Phi\bigl(A+xB,C;\mathcal I,\chi_{\mathcal K\to \mathcal L}\bigr),B;\mathcal M,\chi_{\mathcal M\to \mathcal M}\bigr).
\end{aligned}
\end{equation*}
Moreover, let $\sigma$ be a permutation of the set $\{e_1,\dots,e_m\}$ where $e_\ell$ the $\ell$-th row of the identity matrix $I_{m\times m}$. Then, by definition, the rows of any matrix $S\in\mathcal S_{\mathcal M}$ for $\mathcal M=\{h_1,\dots,h_m\}$ must be in the form of
\begin{equation*}
	\begin{pmatrix} S_{h_\ell 1} &\dots&S_{h_\ell m} \end{pmatrix}=\begin{cases} \sigma(e_\ell) &\textrm{ if } \ell \in\{1,\dots,m\},\\
	O_{1\times m} &\textrm{ if } \ell\in\{m+1,\dots,n\}.\end{cases}
\end{equation*}
Therefore, since the $D_S=D_{S'}$ for $S,S'\in\mathcal S_{\mathcal M}$ and since the number of $\sigma$ is $m!$, we obtain
\begin{equation*}
\begin{aligned}
	f^{(m)}(x)&=\sum_{\mathcal M}\sum_{S\in\mathcal S_{\mathcal M}}D_S\\
	&=m!\sum_{\mathcal M}\det \Phi\bigl(\Phi\bigl(A+xB,C;\mathcal I,\chi_{\mathcal K\to \mathcal L}\bigr),B;\mathcal M,\chi_{\mathcal M\to \mathcal M}\bigr).
\end{aligned}
\end{equation*}

Assuming that there is $\mathcal M$ to satisfy $\mathcal M\cap \mathcal J\ne \emptyset$, then there exists $h \in\mathcal M$ such that $h \notin \mathcal I$ and $b_h=c_j$ for some $j\in\chi_{\mathcal K\to \mathcal L}(\mathcal I)$. Hence, the determinants $D_S$ generated by $S\in\mathcal S_{\mathcal M}$ have
\begin{equation*}
	f_h^{(s_h)}(x)=b_h=c_j.
\end{equation*}
Furthermore, since $j\in\chi_{\mathcal K\to \mathcal L}(\mathcal I)$, there is $i\in\mathcal I$ such that $\chi_{\mathcal I\to \mathcal J}(i)=j$. Thus, the determinants $D_S$ also have
\begin{equation*}
	f_i^{(s_i)}(x)=c_{\chi_{\mathcal K\to \mathcal L}(i)}=c_j
\end{equation*}
as well. Since $h \notin\mathcal I$ and $i\in\mathcal I$, it implies that there are at least two columns are the same and the determinants $D_S$ are accordingly zero. Therefore, we can choose $\mathcal M\cap \mathcal J=\emptyset$ for all $\mathcal M$. The proof is done.
\end{proof}

\begin{corollary}\label{adjoint}
Let $h\in\{0,\dots,n-1\}$ and $A\in M_n(\mathbb R)$, for $x\in \mathbb C$, the following hold
\begin{equation*}
	[A]^h=(h!)^{-1}(\textrm{\normalfont adj}(A+xI))^{(h)}\big|_{x=0}, \qquad\textrm{\normalfont Tr}\,[A]^h=(h!)^{-1}(\det(A+xI))^{(h+1)}\big|_{x=0} 
\end{equation*}
where $\textrm{\normalfont adj}(A+xI)$ is the adjoint matrix of $A+xI$.
\end{corollary}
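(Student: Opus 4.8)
The plan is to deduce both identities from Lemma \ref{mainlemma} once the entries of $\textrm{\normalfont adj}(A+xI)$ are recognized as column–substitution determinants. The starting observation is elementary: for any $M\in M_n(\mathbb R)$ and any $k,j\in\{1,\dots,n\}$, the matrix $\Phi(M,I;\{k\},\chi_{k\to j})$ is $M$ with its $k$-th column replaced by the $j$-th standard basis vector $e_j$, so expanding the determinant along that column gives $(-1)^{k+j}$ times the minor of $M$ obtained by deleting row $j$ and column $k$, i.e. $\det\Phi(M,I;\{k\},\chi_{k\to j})=[\textrm{\normalfont adj}(M)]_{kj}$. (This is exactly the case $h=0$ of the first formula, consistent with $[A]^0=\textrm{\normalfont adj}(A)$ already noted above.)

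Fix $k$ and $j$ and set $M=A+xI$, so $f(x):=\det\Phi(A+xI,I;\{k\},\chi_{k\to j})=[\textrm{\normalfont adj}(A+xI)]_{kj}$. I would apply Lemma \ref{mainlemma} with $B=C=I$, $\mathcal I=\{k\}$, $\mathcal K=\{k\}$, $\mathcal L=\{j\}$, which is legitimate since $h\le n-1=n-|\mathcal I|$. In this instance the auxiliary set of the lemma is $\mathcal J=\{h'\notin\{k\}:e_j=e_{h'}\}=\{j\}\setminus\{k\}$, and the lemma gives
\[
	f^{(h)}(x)=h!\sum_{\mathcal M}\det\Phi\bigl(\Phi(A+xI,I;\{k\},\chi_{k\to j}),\,I;\,\mathcal M,\chi_{\mathcal M\to\mathcal M}\bigr),
\]
the sum over index sets $\mathcal M$ with $|\mathcal M|=h$ and $\mathcal M\cap(\{k\}\cup\mathcal J)=\emptyset$.

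Now evaluate at $x=0$. Since $\mathcal M$ is disjoint from $\{k\}$, the two nested substitutions commute and merge: $\Phi\bigl(\Phi(A,I;\{k\},\chi_{k\to j}),I;\mathcal M,\chi_{\mathcal M\to\mathcal M}\bigr)=\Phi(A,I;\mathcal M\cup\{k\},\chi_{k\to j})$. Putting $\mathcal I:=\mathcal M\cup\{k\}$ sets up a bijection between the admissible $\mathcal M$ and the index sets $\mathcal I$ with $k\in\mathcal I$, $|\mathcal I|=h+1$ and $\mathcal I\cap\mathcal J=\emptyset$, whence $(h!)^{-1}f^{(h)}(0)=\sum\det\Phi(A,I;\mathcal I,\chi_{k\to j})$ over those $\mathcal I$. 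Finally I would discard the constraint $\mathcal I\cap\mathcal J=\emptyset$: it is vacuous when $j=k$, and when $j\ne k$ and $j\in\mathcal I$ the matrix $\Phi(A,I;\mathcal I,\chi_{k\to j})$ has its $k$-th and $j$-th columns both equal to $e_j$ (the $j$-th column because $j\notin\mathcal K$), hence determinant zero. Therefore $(h!)^{-1}f^{(h)}(0)=\sum_{\mathcal I\ni k,\,|\mathcal I|=h+1}\det\Phi(A,I;\mathcal I,\chi_{k\to j})=[A]^h_{kj}$, which is the first identity entrywise. For the second, sum over $j=k$: $\textrm{\normalfont Tr}\,[A]^h=(h!)^{-1}\partial_x^h\bigl(\textrm{\normalfont Tr}\,\textrm{\normalfont adj}(A+xI)\bigr)|_{x=0}$; and by column-multilinearity, differentiating the parameter inside each column of $A+xI$ in turn gives $\partial_x\det(A+xI)=\sum_p\det\bigl(A+xI\ \text{with column}\ p\ \text{replaced by}\ e_p\bigr)=\sum_p[\textrm{\normalfont adj}(A+xI)]_{pp}=\textrm{\normalfont Tr}\,\textrm{\normalfont adj}(A+xI)$, so $\textrm{\normalfont Tr}\,[A]^h=(h!)^{-1}\partial_x^{h+1}\det(A+xI)|_{x=0}$.

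The main obstacle is the index-set bookkeeping in the third paragraph: one has to see that the constraint $\mathcal M\cap(\{k\}\cup\mathcal J)=\emptyset$ produced by Lemma \ref{mainlemma} corresponds, under $\mathcal M\mapsto\mathcal M\cup\{k\}$, to precisely the sum defining $[A]^h_{kj}$, and that the extra terms this adds back vanish because of a repeated column. Everything else — the adjugate-as-determinant identity and the one-line column-multilinearity computation of $\partial_x\det(A+xI)$ — is routine.
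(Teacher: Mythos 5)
Your proof is correct. For the first identity you follow the paper's route --- identify the entries of $\textrm{adj}(A+xI)$ with $\Phi$-determinants and apply Lemma~\ref{mainlemma} with $B=C=I$, $\mathcal I=\{k\}$ --- but you handle explicitly a detail the paper passes over without comment: the lemma returns a sum restricted to index sets disjoint from $\{k\}\cup\mathcal J$, hence excluding $j$, whereas the definition of $[A]^h_{kj}$ ranges over \emph{all} $\mathcal I\ni k$ of cardinality $h+1$. You correctly observe that the extra terms (those with $j\in\mathcal I$, $j\ne k$) produce a matrix with two columns equal to $e_j$ and therefore vanish, so the two sums coincide. For the trace identity your argument genuinely diverges from the paper's. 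The paper re-derives it independently: it expands $\textrm{Tr}\,[A]^h=\sum_i\sum_{\mathcal I\ni i}\det\Phi(A,I;\mathcal I,\chi_{\mathcal I\to\mathcal I})$, observes by a counting argument that each fixed $\mathcal I$ of size $h+1$ recurs $h+1$ times in the double sum, and then applies Lemma~\ref{mainlemma} a second time with $\mathcal I=\emptyset$ to $f(x)=\det(A+xI)$ to match the result with $\partial_x^{h+1}\det(A+xI)|_{x=0}$. You instead sum the already-proved entrywise identity over the diagonal and close with Jacobi's formula $\partial_x\det(A+xI)=\textrm{Tr}\,\textrm{adj}(A+xI)$, which you reprove in one line by column-multilinearity. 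Your route is shorter, avoids the recounting argument and the second invocation of the lemma's combinatorics, and makes the relationship between the two displayed formulas transparent; the paper's route is more uniform in treating both as direct outputs of Lemma~\ref{mainlemma}. Both are sound.
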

\begin{proof}
By the definition of the adjoint matrix, the elements of the matrix are the minors of the matrix $A+xI$. On the other hand, one defines
\begin{equation*}
	M_{ji}(x):=\det \Phi(A+xI,I;\{i\},\chi_{i\to j}), \qquad i,j\in\{1,\dots,n\}.
\end{equation*}
Thus, by the definition of the minor and the definition of $\Phi$, we have $(\textrm{\normalfont adj}(A+xI))_{ij}=M_{ji}(x)$ for all $i,j\in\{1,\dots,n\}$. Moreover, by Lemma \ref{mainlemma}, the following holds
\begin{equation*}
\begin{aligned}
	M_{ji}^{(h)}(0)&=h!\sum_{\mathcal{H}\not\ni i,j}\det\Phi(\Phi(A,I;\{i\},\chi_{i\to j}),I;\mathcal H,\chi_{\mathcal H\to \mathcal H})\\
	&=h!\sum_{\mathcal H\cup \{i\}}\det\Phi(A,I;\mathcal H \cup \{i\},\chi_{i\to j})=h![A]^h_{ij}
\end{aligned}
\end{equation*}
where $\mathcal H$ has the cardinality $|\mathcal H|=h$ for $h\in\{0,\dots,n-1\}$. We finished proving the first equality in the statement.

By the definition of $\chi_{\mathcal M \to \mathcal N}:\mathcal I\to \{1,\dots,n\}$ for any set of indices $\mathcal I$ containing $\mathcal M$, it follows that $\chi_{i\to i}\equiv \chi_{\mathcal I\to \mathcal I}$ for any $\mathcal I$ containing $i$. Thus, by the definition of $[A]^h$ for $h\in\{0,\dots,n-1\}$, we have
\begin{equation*}
\begin{aligned}
	\textrm{\normalfont Tr}\,[A]^{h}&=\sum_{i=1}^n\sum_{\mathcal I\ni i}\det \Phi(A,I;\mathcal I,\chi_{i\to i})\\
	&=\sum_{\mathcal I\ni 1}\det \Phi(A,I;\mathcal I,\chi_{\mathcal I \to \mathcal I})+\dots+\sum_{\mathcal I\ni n}\det \Phi(A,I;\mathcal I,\chi_{\mathcal I\to \mathcal I})
\end{aligned}
\end{equation*}
where $\mathcal I$ has the cardinality $|\mathcal I|=h+1$. 

Moreover, for any fixed set of indices $\mathcal I$ satisfying $|\mathcal I|=h+1$, $\mathcal I$ must be considered in $h+1$ terms in the right hand side of the formula of $\textrm{\normalfont Tr}\,[A]^{h}$. In fact, each of $i\in\mathcal I$ belongs to $\{1,\dots,n\}$. Thus, for any fixed $\mathcal I$, we can collect $h+1$ quantities that are the same and we have
\begin{equation*}
	\textrm{\normalfont Tr}\,[A]^{h}=(h+1)\sum_{\mathcal I}\det \Phi(A,I;\mathcal I,\chi_{\mathcal I \to \mathcal I})
\end{equation*}
where $\mathcal I$ has the cardinality $|\mathcal I|=h+1$.

Furthermore, by Lemma \ref{mainlemma}, one has
\begin{equation*}
	(\det(A+xI))^{(h+1)}\big|_{x=0}=(h+1)!\sum_{\mathcal I}\det \Phi(A,I;\mathcal I,\chi_{\mathcal I \to \mathcal I}).
\end{equation*}
The proof is done.
\end{proof}

We can now give a proof for Proposition \ref{prop:eigenprojection computation}.
\begin{proof}[Proof of Proposition \ref{prop:eigenprojection computation}]
By definition, the resolvent of the matrix $A$ is given by
\begin{equation*}
	R(z):=(A-zI)^{-1}=\dfrac{\textrm{\normalfont adj}(A-zI)}{\det(A-zI)}.
\end{equation*}
For $z$ small, the resolvent can be expanded as
\begin{equation*}
	R(z)=\dfrac{1}{z^m}\dfrac{\sum_{h=0}^{n-1}(-1)^h(h!)^{-1}(\textrm{\normalfont adj}(A+xI))^{(h)}\big|_{x=0}z^h}{\sum_{h=m}^n(-1)^h(h!)^{-1}(\det(A+xI))^{(h)}\big|_{x=0}z^{h-m}}.
\end{equation*}
Thus, Corollary \ref{adjoint} implies that
\begin{equation*}
	R(z)=\dfrac{1}{z^m}\dfrac{\sum_{h=0}^{n-1}(-1)^h[A]^hz^h}{\sum_{h=m}^n(-1)^hh^{-1}\bigl(\textrm{\normalfont Tr}\,[A]^{h-1}\bigr)z^{h-m}}.
\end{equation*}
On the other hand, by using the Laurent expansion of $R(z)$ (see \citep{kato}), we also have
\begin{equation*}
	R(z)=-\sum_{h=-1}^{+\infty}z^{-h-1}(N)^h-z^{-1}P+\sum_{h=0}^{+\infty}z^h(S)^{h+1},
\end{equation*}
where $P,S$ are in \eqref{eq:eigenprojections} and $N=AP$ is the nilpotent matrix associated with the eigenvalue $0$ of $A$.
Then equating two sides, we obtain the formulas. We finish the proof.
\end{proof}

\subsection*{Perturbation theory for linear operators}
In this subsection, we introduce some results from the perturbation theory for linear operators in finite dimensional space that we will use for this paper. Moreover, we will sketch the proofs of them. For whom is interested in, see \citep{kato} for more details.

\begin{proposition}\label{prop:subprojections}
	Assume that $T$ is a matrix operator considered in a domain $\mathcal D:=\textrm{\normalfont ran}(P)$ where $P$ is a matrix operator. Let $(P_j)$ for $j=1,\dots,k$ be a sequence of matrix operators such that
\begin{equation}\label{eq:subprojections}
	P_j^2=P_j,\quad P_jP_{j'}=O \textrm{ for } j\ne j', \quad P=\sum_{j=1}^kP_j \quad \textrm{and}\quad \textrm{\normalfont ran}(P)=\bigoplus_{j=1}^k\textrm{\normalfont ran}(P_j).
\end{equation}
If $T$ commutes with $P_j$ for $j=1,\dots,k$, then one has
\begin{equation}\label{eq:commuting with operator}
	TP_j=P_jT=P_jTP_j\qquad\textrm{and}\qquad TP=PT=PTP=\sum_{j=1}^k(TP_j).
\end{equation}

Moreover, $\lambda\in\sigma(T)$ considered in $\textrm{\normalfont ran}(P)$ if and only if there is $j_0\in\{1,\dots,k\}$ such that $\lambda\in\sigma(T)$ considered in $\textrm{\normalfont ran}(P_{j_0})$.
\end{proposition}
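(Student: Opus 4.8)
The plan is to prove the algebraic identities \eqref{eq:commuting with operator} by direct manipulation of projections, and then to read off the spectral equivalence from the direct-sum decomposition of $\textrm{\normalfont ran}(P)$.

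First I would dispatch \eqref{eq:commuting with operator}. Starting from the commutation hypothesis $TP_j=P_jT$, left-multiplication by $P_j$ together with $P_j^2=P_j$ gives $P_jTP_j=P_j(P_jT)=P_jT$, while right-multiplication gives $P_jTP_j=(TP_j)P_j=TP_j$; hence $TP_j=P_jT=P_jTP_j$. Summing over $j$ and invoking $P=\sum_{j}P_j$ yields $TP=\sum_j TP_j=\sum_j P_jT=PT$. Moreover the orthogonality relations in \eqref{eq:subprojections} force $P^2=\sum_{j,j'}P_jP_{j'}=\sum_j P_j^2=P$, so $PTP=(TP)P=TP^2=TP$, which completes \eqref{eq:commuting with operator}.

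Next I would establish the spectral characterization. A preliminary remark: $PP_j=\sum_{j'}P_{j'}P_j=P_j$, so $\textrm{\normalfont ran}(P_j)\subseteq\textrm{\normalfont ran}(P)$, and by \eqref{eq:commuting with operator} the operator $T$ leaves each $\textrm{\normalfont ran}(P_j)$ invariant; thus both eigenvalue problems appearing in the statement are meaningful. For the ``if'' direction, an eigenvector $u\neq O_{n\times1}$ of $T$ considered in $\textrm{\normalfont ran}(P_{j_0})$ automatically lies in $\textrm{\normalfont ran}(P)$ and satisfies $Tu=\lambda u$, so $\lambda\in\sigma(T)$ considered in $\textrm{\normalfont ran}(P)$. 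For the ``only if'' direction, take $u\in\textrm{\normalfont ran}(P)$, $u\neq O_{n\times1}$, with $Tu=\lambda u$; the decomposition $\textrm{\normalfont ran}(P)=\bigoplus_j\textrm{\normalfont ran}(P_j)$ lets me write $u=\sum_j u_j$ with $u_j:=P_ju\in\textrm{\normalfont ran}(P_j)$, and $u\neq O_{n\times1}$ guarantees $u_{j_0}\neq O_{n\times1}$ for some $j_0$. Applying $P_{j_0}$ to $Tu=\lambda u$ and using $P_{j_0}T=TP_{j_0}$, I obtain $Tu_{j_0}=TP_{j_0}u=P_{j_0}Tu=\lambda P_{j_0}u=\lambda u_{j_0}$, so $\lambda\in\sigma(T)$ considered in $\textrm{\normalfont ran}(P_{j_0})$.

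The argument is entirely algebraic, so I do not anticipate a genuine obstacle; the only delicate point is the bookkeeping around the notion ``eigenvalue considered in a domain'', i.e. verifying that $\textrm{\normalfont ran}(P_{j_0})$ is both contained in $\textrm{\normalfont ran}(P)$ and invariant under $T$, so that eigenvectors transfer faithfully between the two problems.
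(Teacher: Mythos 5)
Your proof is correct and follows essentially the same route as the paper: verify the projection and commutation identities by direct algebra, then decompose an eigenvector $u\in\textrm{\normalfont ran}(P)$ as $u=\sum_j P_j u$, apply $P_{j_0}$ using commutativity to produce a nonzero eigenvector in some $\textrm{\normalfont ran}(P_{j_0})$, and observe the converse from $\textrm{\normalfont ran}(P_{j_0})\subseteq\textrm{\normalfont ran}(P)$. The only cosmetic difference is that you apply $T$ directly to $u_{j_0}$ whereas the paper first inserts the identity $P_jP=P_j$; both amount to the same calculation.
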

\begin{proof}
For $j=1,\dots,k$, since $P_j^2=P_j$ by \eqref{eq:subprojections}, one has $P_jTP_j=TP_j^2=TP_j=P_jT$ if $T$ commutes with $P_j$.

Also from \eqref{eq:subprojections}, one has $P=\sum_{j=1}^kP_j$ and thus, we have
\begin{equation*}
	TP=\sum_{j=1}^k(TP_j)=\sum_{j=1}^kP_jT=PT.
\end{equation*}
We now prove that $P$ is a projection. Indeed, since $P_jP_{j'}=O$ for $j\ne j'$ and $P_j^2=P_j$, we have
\begin{equation*}
	P^2=\left(\sum_{j=1}^kP_j\right)^2=\sum_{j,j'=1}^kP_jP_{j'}=\sum_{j=1}^kP_j=P.
\end{equation*}
Hence, we have $PTP=P^2T=PT=TP=\sum_{j=1}^k(TP_j)$.

Assume that there is $u\in\textrm{\normalfont ran}(P)$ such that $u\ne O_{n\times 1}$ and $Tu=\lambda u$. Then, $u=Pu$ and one has $TPu=\lambda Pu$. Moreover, since $PP_j=\sum_{j'=1}^k(P_{j'}P_j)=P_j=\sum_{j'=1}(P_jP_{j'})=P_jP$ and $TP_j=P_jT$, one obtains
\begin{equation*}
	TP_ju=T(P_jP)u=(P_jT)Pu=\lambda P_jPu=\lambda P_ju.
\end{equation*}
On the other hand, since the direct sum $\sum_{j=1}^k(P_ju)=Pu=u\ne O_{n\times 1}$, there is at least $j_0\in\{1,\dots,k\}$ such that $P_{j_0}u\ne O_{n\times 1}$. Thus, let $v=P_{j_0}u\in\textrm{\normalfont ran}(P_{j_0})$, $v\ne O_{n\times 1}$ and $Tv=\lambda v$.

For the inverse, let $v\in\textrm{\normalfont ran}(P_{j_0})$ for some $j_0\in\{1,\dots,k\}$ such that $v\ne O_{n\times 1}$ and $Tv=\lambda v$, since $\textrm{\normalfont ran}(P_{j_0})\subset \textrm{\normalfont ran}(P)$ by \eqref{eq:subprojections}, we finish the proof.
\end{proof}

\begin{proposition}\label{prop:construction of subprojections}
	For $x\in\mathbb C$ small enough, let $T(x)=T^{(0)}+\mathcal O(|x|)$ where $T^{(0)}$ is a matrix and $T$ is considered in the domain $\mathcal D:=\textrm{\normalfont ran}(P)$ where $P(x)=P^{(0)}+\mathcal O(|x|)$. Assume that there are $k\le n$ distinct eigenvalues $\lambda_j^{(0)}$ of $T^{(0)}$ considered in $\textrm{\normalfont ran}(P^{(0)})$ where $j=1,\dots,k$. Then, there is a unique sequence $(P_j)$ satisfying \eqref{eq:subprojections} and \eqref{eq:commuting with operator} such that $P_j(x)=P_j^{(0)}+\mathcal O(|x|)$ where $P_j^{(0)}$ is the eigenprojection associated with $\lambda_j^{(0)}$ where $j=1,\dots,k$.
	
	In particular, for any $\lambda\in\sigma(T)$ considered in $\textrm{\normalfont ran}(P)$, $\lambda\in\sigma(T)$ considered in $\textrm{\normalfont ran}(P_j)$ if and only if $\lambda(x)\to \lambda_j^{(0)}$ as $|x|\to 0$ for $j\in\{1,\dots,k\}$.
\end{proposition}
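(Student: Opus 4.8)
The plan is to construct each $P_j$ as a Riesz (Dunford) projection assembled from the resolvent of $T$ on the (moving) invariant subspace $\textrm{\normalfont ran}(P)$, and then to read off every assertion from the holomorphic functional calculus, in the spirit of \citep{kato}. As in all the applications, I take $P$ to commute with $T$, so that $\textrm{\normalfont ran}(P)$ and its spectral complement are $T$-invariant and $T|_{\textrm{\normalfont ran}(P)}$ is well defined. First I would freeze the ambient space: since $P(x)=P^{(0)}+\mathcal O(|x|)$, for $|x|$ small the projections $P(x)$ and $P^{(0)}$ are close, so Kato's transformation function supplies an invertible $U(x)=I+\mathcal O(|x|)$ with $U(x)\,\textrm{\normalfont ran}(P^{(0)})=\textrm{\normalfont ran}(P(x))$. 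Conjugating, $\widetilde T(x):=U(x)^{-1}T(x)U(x)=T^{(0)}+\mathcal O(|x|)$ acts on the \emph{fixed} space $\textrm{\normalfont ran}(P^{(0)})$, and it suffices to prove everything for $\widetilde T$ and to transport the resulting projections back by $U(x)$.

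Next I would fix, once and for all, $k$ pairwise disjoint positively oriented Jordan curves $\Gamma_1,\dots,\Gamma_k\subset\mathbb C$ with $\Gamma_j$ enclosing $\lambda_j^{(0)}$ and no other eigenvalue of $T^{(0)}|_{\textrm{\normalfont ran}(P^{(0)})}$; this is possible precisely because these eigenvalues are finite in number and distinct. By continuity of the resolvent and upper semicontinuity of the spectrum, for $|x|$ small each $\Gamma_j$ lies in $\rho\bigl(\widetilde T(x)|_{\textrm{\normalfont ran}(P^{(0)})}\bigr)$ and encircles exactly the cluster of eigenvalues of $\widetilde T(x)|_{\textrm{\normalfont ran}(P^{(0)})}$ that tends to $\lambda_j^{(0)}$. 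I would then set
\begin{equation*}
	P_j(x):=-\dfrac{1}{2\pi i}\,U(x)\left(\int_{\Gamma_j}\bigl(\widetilde T(x)|_{\textrm{\normalfont ran}(P^{(0)})}-zI\bigr)^{-1}\,dz\right)U(x)^{-1}.
\end{equation*}
Standard contour computations with the first resolvent identity give $P_j^2=P_j$ and $P_jP_{j'}=O$ for $j\ne j'$, and, since $\Gamma_1\cup\dots\cup\Gamma_k$ encloses all of $\sigma\bigl(T|_{\textrm{\normalfont ran}(P)}\bigr)$, also $\sum_{j}P_j=P$ and $\textrm{\normalfont ran}(P)=\bigoplus_{j}\textrm{\normalfont ran}(P_j)$, i.e.\ \eqref{eq:subprojections}; moreover each $P_j$ is an analytic function of the resolvent of $T$, hence commutes with $T$, which together with Proposition \ref{prop:subprojections} gives \eqref{eq:commuting with operator}. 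Because the integrand depends continuously — indeed analytically, if $T$ does — on $x$, uniformly on the fixed compact curve $\Gamma_j$, one obtains $P_j(x)=P_j(0)+\mathcal O(|x|)$, and $P_j(0)$ is the Riesz projection of $T^{(0)}$ at $\lambda_j^{(0)}$, namely the eigenprojection $P_j^{(0)}$.

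For uniqueness I would argue that if $(P_j')$ is another family with the same properties and $P_j'(x)=P_j^{(0)}+\mathcal O(|x|)$, then each $\textrm{\normalfont ran}(P_j')$ is $T$-invariant and, since $P_j'(x)\to P_j^{(0)}$, the spectrum of $T$ on $\textrm{\normalfont ran}(P_j')$ sits, for $|x|$ small, inside $\Gamma_j$ and outside every $\Gamma_{j'}$ with $j'\ne j$; as $\sum_j P_j'=P$, this forces $\textrm{\normalfont ran}(P_j')$ to be the full spectral subspace of $T|_{\textrm{\normalfont ran}(P)}$ attached to the $j$-th cluster, which is $\textrm{\normalfont ran}(P_j)$. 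A projection being determined by its range and by the complementary invariant subspace $\bigoplus_{j'\ne j}\textrm{\normalfont ran}(P_{j'}')=\bigoplus_{j'\ne j}\textrm{\normalfont ran}(P_{j'})$, it follows that $P_j'=P_j$. Finally, the ``in particular'' clause follows by combining the construction with Proposition \ref{prop:subprojections}: $\lambda\in\sigma(T)$ in $\textrm{\normalfont ran}(P)$ iff $\lambda\in\sigma(T)$ in some $\textrm{\normalfont ran}(P_{j})$, and by construction $\sigma\bigl(T|_{\textrm{\normalfont ran}(P_j)}\bigr)$ is exactly the portion of $\sigma\bigl(T|_{\textrm{\normalfont ran}(P)}\bigr)$ enclosed by $\Gamma_j$, i.e.\ the eigenvalue branches $\lambda(x)$ with $\lambda(x)\to\lambda_j^{(0)}$ as $|x|\to0$.

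The step I expect to be the main obstacle is the reduction to the fixed space together with the uniform persistence of the spectral splitting: one must control the moving range $\textrm{\normalfont ran}(P(x))$ via the transformation function $U(x)$ and then verify that the fixed contours $\Gamma_j$ truly separate $\sigma\bigl(\widetilde T(x)|_{\textrm{\normalfont ran}(P^{(0)})}\bigr)$ for \emph{all} sufficiently small $|x|$ — a compactness and continuity argument, but the one place where the hypotheses that the $\lambda_j^{(0)}$ are finite in number and distinct genuinely enter. Everything else is the routine functional calculus of \citep{kato}.
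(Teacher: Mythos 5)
Your proposal is correct and rests on the same core device as the paper's proof, namely Riesz/Dunford projections $P_j(x)=-\frac{1}{2\pi i}\int_{\Gamma_j}(T(x)-zI)^{-1}dz$ taken over fixed contours $\Gamma_j\subset\rho(T^{(0)})$ enclosing each $\lambda_j^{(0)}$, with the resolvent expansion of Lemma~\ref{lem:resolvent} (your ``continuity of the resolvent plus upper semicontinuity of the spectrum'') guaranteeing that for small $|x|$ the contours lie in $\rho(T(x))$ and cleanly separate the $\lambda_j^{(0)}$-groups. The one genuine departure is your treatment of the moving domain $\textrm{\normalfont ran}(P(x))$: you conjugate by Kato's transformation function $U(x)=I+\mathcal O(|x|)$ so that the whole construction takes place on the \emph{fixed} subspace $\textrm{\normalfont ran}(P^{(0)})$, whereas the paper first builds the $P_j$ for $\mathcal D=\mathbb C^n$ and then disposes of the general case with the single sentence that one should ``define the unique eigenprojection $\tilde P_j$ associated with $\textrm{\normalfont ran}(P_j)\cap\textrm{\normalfont ran}(P)$.'' Your version is more explicit and arguably tighter on this point; it also makes visible the hypothesis that $T$ commutes with $P$, which the paper uses implicitly (otherwise the restriction $T|_{\textrm{\normalfont ran}(P)}$ and the sum $\sum_j P_j=P$ would not make sense). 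You also supply a short uniqueness argument, which the paper asserts but does not really prove. Conversely, the paper spends more effort upfront tracing the convergence of eigenvalues and eigenvectors to justify the group decomposition and the ``in particular'' clause at the level of individual branches $\lambda(x)$; your argument gets the same conclusion more quickly by identifying $\sigma(T|_{\textrm{\normalfont ran}(P_j)})$ with the part of the spectrum enclosed by $\Gamma_j$. Both routes are sound and close in spirit; yours trades a bit of elementary eigenvector tracking for a cleaner invocation of the transformation-function machinery.
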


Before going to the proof of Proposition \ref{prop:construction of subprojections}, we have the following lemma. Let the resolvent of a matrix operator $T$ where $T$ depends on $x\in\mathbb C$ be
\begin{equation}
	R(x,z):=(T(x)-zI)^{-1},\qquad z\in \rho(T).
\end{equation}
\begin{lemma}\label{lem:resolvent}
The resolvent of the matrix operator $T(x):=T^{(0)}+\mathcal O(|x|)$ is holomorphic in any neighborhood of $(x,y)\in\mathbb C^2$ such that $y\in \rho\bigl(T^{(0)}\bigr)$. Moreover, if $\Gamma$ a compact subset of $\rho(T^{(0)})$, then $R(x,y)$ is a convergent series as $|x|\to 0$ uniformly in $y\in\Gamma$ and thus one has the expansion
\begin{equation}\label{eq:expansion of resolvent}
	R(x,y)=R^{(0)}(y)+\mathcal O(|x|), \qquad |x|\to 0,
\end{equation}
where $R^{(0)}(y):=(T^{(0)}-yI)^{-1}$.

As a consequence, there is no eigenvalue of $T$ included in $\Gamma$.
\end{lemma}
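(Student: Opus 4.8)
The plan is to reduce everything to a Neumann–series (second resolvent) argument. First I would observe that for $z\in\rho\bigl(T^{(0)}\bigr)$ one may factor
\begin{equation*}
	T(x)-zI=\bigl(T^{(0)}-zI\bigr)\bigl(I+R^{(0)}(z)(T(x)-T^{(0)})\bigr),
\end{equation*}
so that, whenever the second factor is invertible,
\begin{equation*}
	R(x,z)=\bigl(I+R^{(0)}(z)(T(x)-T^{(0)})\bigr)^{-1}R^{(0)}(z).
\end{equation*}
Since $T(x)=T^{(0)}+\mathcal O(|x|)$, the matrix norm $\bigl|R^{(0)}(z)(T(x)-T^{(0)})\bigr|$ is at most $|R^{(0)}(z)|\cdot\mathcal O(|x|)$; fixing $y\in\rho(T^{(0)})$ and using that $z\mapsto R^{(0)}(z)$ is continuous (hence bounded) on a small closed ball around $y$, this quantity is $<1$ for $|x|$ small, uniformly for $z$ in that ball. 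Hence the Neumann series
\begin{equation*}
	\bigl(I+R^{(0)}(z)(T(x)-T^{(0)})\bigr)^{-1}=\sum_{k=0}^{\infty}(-1)^k\bigl(R^{(0)}(z)(T(x)-T^{(0)})\bigr)^k
\end{equation*}
converges, and in particular $T(x)-zI$ is invertible there, which already proves $R$ is well defined on a neighborhood of $(0,y)$.

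Next I would deduce holomorphy: each summand above is a polynomial expression in the entries of $T(x)$ — holomorphic in $x$ by the standing assumption on $T$ — and in the entries of $R^{(0)}(z)=(T^{(0)}-zI)^{-1}$, which are rational, hence holomorphic, in $z$ on $\rho(T^{(0)})$. The series converges locally uniformly in $(x,z)$, and a locally uniform limit of holomorphic functions is holomorphic; therefore $R(x,z)$ is holomorphic on a neighborhood of $(0,y)$, and since $y\in\rho(T^{(0)})$ was arbitrary, on a neighborhood of $\{0\}\times\rho(T^{(0)})$.

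Then I would upgrade the local statements to the uniform one on a compact set $\Gamma\subset\rho(T^{(0)})$. By continuity there is $M>0$ with $|R^{(0)}(z)|\le M$ for all $z\in\Gamma$; choosing $|x|$ small enough that $M\cdot\mathcal O(|x|)\le 1/2$, the $k$-th term of the Neumann series is bounded by $2^{-k}$ (and by $\mathcal O(|x|)$ for $k\ge1$) uniformly in $z\in\Gamma$. Summing and multiplying by $R^{(0)}(z)$ gives $R(x,z)=R^{(0)}(z)+\mathcal O(|x|)$ uniformly in $z\in\Gamma$, which is precisely \eqref{eq:expansion of resolvent}.

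Finally, the consequence is immediate: for $z=y\in\Gamma$ and $|x|$ small the displayed factorization exhibits $T(x)-yI$ as a product of two invertible matrices, so $y\notin\sigma(T(x))$; as $y\in\Gamma$ was arbitrary, $\sigma(T(x))\cap\Gamma=\emptyset$. The only point needing any care is making the smallness of $|x|$ uniform over the compact set $\Gamma$ (respectively over a small ball around a fixed $y$ in the first part), but this follows at once from the continuity of the unperturbed resolvent on compacts; I do not expect a genuine obstacle here.
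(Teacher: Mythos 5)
Your proposal is correct and takes essentially the same route as the paper: both proofs rest on the second–resolvent (Neumann‐series) factorization of $T(x)-zI$, with the geometric series controlled uniformly on $\Gamma$ once $\sup_{z\in\Gamma}\lvert R^{(0)}(z)\rvert$ is bounded, yielding the expansion $R(x,z)=R^{(0)}(z)+\mathcal O(|x|)$ and the absence of eigenvalues of $T(x)$ on $\Gamma$. The only cosmetic difference is that the paper expands $T(x)-zI$ around the base point $y$ (absorbing the $(z-y)I$ displacement into the perturbation), whereas you factor directly through $T^{(0)}-zI$ and invoke the rationality of $z\mapsto R^{(0)}(z)$ for holomorphy in $z$; both variants establish the same facts.
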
 
\begin{proof}[Proof of Lemma \ref{lem:resolvent}]
For $z\in\rho(T)$ and $y\in \rho\bigl(T^{(0)}\bigr)$, we have
\begin{equation*}
\begin{aligned}
	T(x)-zI&=\bigl(T^{(0)}-yI\bigr)-\left((z-y)I-\bigl(T(x)-T^{(0)}\bigr)\right)\\
	&=\left(1-\left((z-y)I-\bigl(T(x)-T^{(0)}\bigr)\right)\bigl(T^{(0)}-\lambda^{(0)}I\bigr)^{-1}\right)\bigl(T^{(0)}-yI\bigr).
\end{aligned}
\end{equation*}
Thus, taking the inverse and since $T(x)-T^{(0)}=\mathcal O(|x|)$ for $x$ small, we obtain
\begin{equation*}
	R(x,z)=R^{(0)}(y)\left(1-\left((z-y)I-\mathcal O(|x|)\right)R^{(0)}(y)\right)^{-1}.
\end{equation*}
Furthermore, for any matrix norm $\|\cdot\|$, we also have
\begin{equation*}
\left\|\left((z-y)I-\mathcal O(|x|)\right)R^{(0)}(y)\right\|\le \left(|z-y|+C|x|\right)\bigl\|R^{(0)}(y)\bigr\|<1
\end{equation*}
for $x$ and $z-y$ small enough. Thus, it implies that $R(x,z)$ can be expanded as a convergent series and is holomorphic in any neighborhood of $(x,y)$.

On the other hand, for $x$ small and $y\in\rho\bigl(T^{(0)}\bigr)$, one has
\begin{equation*}
	T(x)-yI=\bigl(T^{(0)}-yI\bigr)+\mathcal O(|x|)=\bigl(I+\mathcal O(|x|)\bigl(T^{(0)}-yI\bigr)^{-1}\bigr)\bigl(T^{(0)}-yI\bigr).
\end{equation*}
Thus, one deduces
\begin{equation*}
	R(x,y)=R^{(0)}(y)\bigl(I+\mathcal O(|x|)R^{(0)}(y)\bigr)^{-1}=R^{(0)}(y)\bigl(I+\mathcal O(|x|)\bigr)=R^{(0)}(y)+\mathcal O(|x|).
\end{equation*}
On the other hand, one notes that $R(x,y)$ is expressed based on $R^{(0)}(y)$. Since $\Gamma$ is a compact subset of $\rho\bigl(T^{(0)}\bigr)$, the norm $\big\|\mathcal O(|x|)R^{(0)}(y)\bigr\|$ can be bounded by $1$ uniformly for all $y\in\Gamma$. As a consequence, since $R(x,y)$ exists for all $x$ small and $y\in \Gamma$, there is no eigenvalue of $T$ belongs to $\Gamma$.
\end{proof}

We are now going back to the proof of Proposition \ref{prop:construction of subprojections}.
\begin{proof}[Proof of Proposition \ref{prop:construction of subprojections}]
Primarily, we have the follows. Let $\lambda\in \sigma(T)$ considered in $\mathbb C^n$, $\lambda$ must be a solution of the dispersion polynomial $p:=\det(T-\lambda I)$ that is an analytic function in $x\in \mathbb C$ since $T$ is analytic in $x\in \mathbb C$. Moreover, it is known that $\lambda$ is continuous and converges to an eigenvalue of $T^{(0)}$ as $|x|\to 0$ since $T(x)=T^{(0)}+\mathcal O(|x|)$ as $|x|\to 0$. Thus, one can write
\begin{equation}\label{eq:formula of eigenvalue of T}
	\lambda(x):=\lambda^{(0)}+{\scriptstyle\mathcal O}(1),\qquad |x|\to 0,
\end{equation}
where $\lambda^{(0)}\in\sigma\bigl(T^{(0)}\bigr)$ considered in $\mathbb C^n$ is the limit of $\lambda$ as $|x|\to 0$. In particular, due to the formula \eqref{eq:formula of eigenvalue of T}, the eigenvectors $u\in\mathbb C^{n}$ associated with $\lambda$ can be chosen such that
\begin{equation}\label{eq:formula of eigenvector of lambda}
	u(x):=u^{(0)}+{\scriptstyle\mathcal O}(1),\qquad |x|\to 0,
\end{equation}
where $u^{(0)}\in\mathbb C^n$ are the eigenvectors associated with $\lambda^{(0)}$. It follows that $u\in \textrm{\normalfont ran}(P)$ if and only if $u^{(0)}\in\textrm{\normalfont ran}\bigl(P^{(0)}\bigr)$. Indeed, one has
\begin{equation*}
	Pu=\bigl(P^{(0)}+\mathcal O(|x|)\bigr)\bigl(u^{(0)}+{\scriptstyle\mathcal O}(1)\bigr)=P^{(0)}u^{(0)}+{\scriptstyle\mathcal O}(1)
\end{equation*}
and thus $Pu=u$ if and only if $P^{(0)}u^{(0)}=u^{(0)}$. It implies that $\lambda\in\sigma(T)$ considered in $\textrm{\normalfont ran}(P)$ if and only if $\lambda^{(0)}\in\sigma\bigl(T^{(0)}\bigr)$ considered in $\textrm{\normalfont ran}\bigl(P^{(0)}\bigr)$. Therefore, if $\lambda_j^{(0)}$ for $j=1,\dots,k$ are the $k$ distinct eigenvalues of $T^{(0)}$ considered in $\textrm{\normalfont ran}\bigl(P^{(0)}\bigr)$, then the above argument and the expansion \eqref{eq:formula of eigenvalue of T} show that for any eigenvalue $\lambda$ of $T$ considered in the domain $\mathcal D=\textrm{\normalfont ran}(P)$, then $\lambda$ converges to an eigenvalue $\lambda_j^{(0)}$ of $T^{(0)}$ considered in $\textrm{\normalfont ran}\bigl(P^{(0)}\bigr)$ for some $j\in\{1,\dots,k\}$ as $|x|\to 0$. In particular, for each $j\in\{1,\dots,k\}$, the set of all eigenvalues $\lambda$ of $T$ considered in $\mathcal D$ such that $\lambda\to \lambda_j^{(0)}$ as $|x|\to 0$ is the $\lambda_j^{(0)}$-group of $T$. For easy, we consider the formal formula
\begin{equation}\label{eq:subgroups 1}
	\sigma(T)\textrm{ considered in } \mathcal D =\bigcup_{j=1}^k\bigl(\lambda_j^{(0)}\textrm{-group}\bigr),
\end{equation}
where
\begin{equation}\label{eq:subgroups 2}
	\lambda_j^{(0)}\textrm{-group}:=\bigl\{\lambda\in \sigma(T)\textrm{ considered in } \mathcal D :\lambda\to \lambda_j^{(0)} \textrm{ as }|x|\to 0\bigr\}.
\end{equation}

We are going to prove the unique existence of a sequence $(P_j)$ satisfying \eqref{eq:subprojections} and \eqref{eq:commuting with operator} where $j=1,\dots,k$. First of all, we consider the domain $\mathcal D=\mathbb C^n$ {\it i.e.} $P=I$ the identity matrix and since $P(x)=P^{(0)}+\mathcal O(|x|)$ as $|x|\to 0$, $P^{(0)}=I$ as well. Hence, the eigenvalues $\lambda$ of $T$ and $\lambda_{j}^{(0)}$ of $T^{(0)}$ in this case are considered in $\mathbb C^n$. Let $\lambda\in\sigma(T)$ and let $\Gamma_\lambda$ be a closed curve enclosing $\lambda$ except for the other eigenvalues of $T$ in the complex plane, since $\lambda$ is singularity of the resolvent $R(z)=(T-zI)^{-1}$ of $T$, the Cauchy integral
\begin{equation}
	P_\lambda(x):=-\dfrac{1}{2\pi i}\int_{\Gamma_\lambda}R(x,z)\,dz
\end{equation}
is exactly the eigenprojection associated with $\lambda$. The matrix operator $N_\lambda:=(T-\lambda I)P_\lambda$
is then the nilpotent part associated with $\lambda$. Moreover, $TP_\lambda=\lambda P_\lambda+N_\lambda=P_\lambda T$. Nonetheless, the resolvent $R(x,z)$ for $x$ small and $z\in\rho(T)$ cannot be expanded explicitly in general except for the case $z$ belongs to a compact set contained in $\rho\bigl(T^{(0)}\bigr)$ provided Lemma \ref{lem:resolvent}.

Based on that, for $j\in\{1,\dots,k\}$, let $\Gamma_j$ be a closed curve in $\rho\bigl(T^{(0)}\bigr)$ such that $\Gamma_j$ encloses the eigenvalue $\lambda_j^{(0)}$ except for the other eigenvalues of $T^{(0)}$. Then, by Lemma \ref{lem:resolvent}, there is no eigenvalue of $T$ to belong to $\Gamma_j$ and therefore, for $x$ small enough, the interior domain bounded by $\Gamma_j$ only encloses the eigenvalues of $T$ such that $\lambda\to \lambda_j^{(0)}$ as $|x|\to 0$ {\it i.e.} the $\lambda_j^{(0)}$-group is contained in this domain except for the other groups of $T$. Hence, for every $\lambda$ included in the $\lambda_j^{(0)}$-group, one can choose $\Gamma_\lambda$ such that they are strictly contained in the domain bounded by $\Gamma_j$ and one has
\begin{equation*}
	P_j(x):=-\dfrac{1}{2\pi i}\int_{\Gamma_j}R(x,y)\,dy=\sum_{\lambda\in \lambda_j^{(0)}\textrm{-group}}\left(-\dfrac{1}{2\pi i}\int_{\Gamma_\lambda}R(x,z)\,dz\right)=\sum_{\lambda\in \lambda_j^{(0)}\textrm{-group}}P_\lambda.
\end{equation*}
Hence, $P_j$ is called the {\it total projection} associated with the $\lambda_j^{(0)}$-group of $T$.

The sequence of the total projections $P_j$ where $j\in\{1,\dots,k\}$ satisfies the properties \eqref{eq:subprojections}. Indeed, since, for $\lambda\in \lambda_j^{(0)}$-group, $P_\lambda$ is an eigenprojection, one has
\begin{equation*}
	P_j^2=\left(\sum_{\lambda\in \lambda_j^{(0)}\textrm{-group}}P_\lambda\right)^2=\sum_{\lambda,\lambda'\in \lambda_j^{(0)}\textrm{-group}}P_\lambda P_{\lambda'}=\sum_{\lambda\in \lambda_j^{(0)}\textrm{-group}}P_\lambda=P_j,
\end{equation*}
and for $j\ne j'$, since $\lambda_j^{(0)}\ne \lambda_{j'}^0$ due to the distinct property, one has
\begin{equation*}
	P_jP_{j'}=\left(\sum_{\lambda\in \lambda_j^{(0)}\textrm{-group}}P_\lambda\right)\left(\sum_{\lambda'\in \lambda_{j'}^{(0)}\textrm{-group}}P_{\lambda'}\right)=\sum_{\substack{ \lambda\in \lambda_j^{(0)}\textrm{-group} \\  \lambda'\in \lambda_{j'}^{(0)}\textrm{-group} }}P_\lambda P_{\lambda'}=O
\end{equation*}
since these two groups are distinct. Moreover, we have $\mathbb C^n=\bigoplus_{\lambda\in \sigma(T)}\textrm{\normalfont ran}(P_\lambda)$ and $I=\sum_{\lambda \in \sigma(T)}P_\lambda$ and thus from \eqref{eq:subgroups 1} and \eqref{eq:subgroups 2}, one deduces
\begin{equation*}
	I=\sum_{j=1}^k\sum_{\lambda\in \lambda_j^{(0)}\textrm{-group}}P_\lambda=\sum_{j=1}^kP_j \quad \textrm{and}\quad \mathbb C^n=\bigoplus_{j=1}^k\bigoplus_{\lambda\in \lambda_j^{(0)}\textrm{-group}}\textrm{\normalfont ran}(P_\lambda)=\bigoplus_{j=1}^k\textrm{\normalfont ran}(P_j).
\end{equation*}

Then, the property \eqref{eq:commuting with operator} holds if one proves that $T$ commutes with $P_j$ for all $j\in\{1,\dots,k\}$ due to Proposition \ref{prop:subprojections}. Infact, for all $\lambda\in \sigma(T)$, since $TP_\lambda=P_\lambda T$, one obtain that
\begin{equation*}
	TP_j=\sum_{\lambda\in \lambda_j^{(0)}\textrm{-group}}(TP_\lambda)=\sum_{\lambda\in \lambda_j^{(0)}\textrm{-group}}(P_\lambda T)=P_jT
\end{equation*}
for all $j\in\{1,\dots,k\}$.

On the other hand, from \eqref{eq:expansion of resolvent}, one has
\begin{equation*}
	P_j(x):=-\dfrac{1}{2\pi i}\int_{\Gamma_j}R(x,y)\,dy=-\dfrac{1}{2\pi i}\int_{\Gamma_j}R^{(0)}(y)\,dy+\mathcal O(|x|),\qquad |x|\to 0,
\end{equation*}
where $R^{(0)}(y)=\bigl(T^{(0)}-yI\bigr)^{-1}$ for $y\in\rho\bigl(T^{(0)}\bigr)$. Then, it is easy to see that $R^{(0)}$ is the resolvent of the matrix $T^{(0)}$ and thus by the definition of $\Gamma_j$, it implies that 
\begin{equation*}
	P_j(x)=P_j^{(0)}+\mathcal O(|x|),\qquad |x|\to 0,
\end{equation*}
where $P_j^{(0)}$ is the eigenprojection associated with $\lambda_j^{(0)}$ since it is known that $P_j^{(0)}=-\dfrac{1}{2\pi i}\int_{\Gamma_j}R^{(0)}(y)\,dy$.

We already construct the desired sequence of $P_j$ where $j=1,\dots,k$ if $\mathcal D=\mathbb C^n$. For the case $\mathcal D=\textrm{\normalfont ran}(P)$, it is enough to define the unique eigenprojection $\tilde P_j$ associated with the domain $\textrm{\normalfont ran}(P_j)\cap \textrm{\normalfont ran}(P)$ where $P_j$ is constructed as before for each $j\in\{1,\dots,k\}$. One can denote $\tilde P_j$ again by $P_j$ where $j=1,\dots,k$.

Finally, we prove that for any $\lambda\in\sigma(T)$ considered in $\textrm{\normalfont ran}(P)$, $\lambda\in\sigma(T)$ considered in $\textrm{\normalfont ran}(P_j)$ if and only if $\lambda(x)\to \lambda_j^{(0)}$ as $|x|\to 0$ for $j\in\{1,\dots,k\}$. Indeed, for each $j\in\{1,\dots,k\}$, since $P_j(x)=P_j^{(0)}+\mathcal O(|x|)$ as $|x|\to 0$, similarly to the beginning of the proof of this Proposition, we already prove that $\lambda\in\sigma(T)$ considered in $\textrm{\normalfont ran}(P_j)$ if and only if $\lambda^{(0)}\in\sigma\bigl(T^{(0)}\bigr)$ considered in $\textrm{\normalfont ran}\bigl(P_j^{(0)}\bigr)$ where $\lambda^{(0)}$ is the limit of $\lambda$ as $|x|\to 0$. On the other hand, $\textrm{\normalfont ran}\bigl(P_j^{(0)}\bigr)\subset \textrm{\normalfont ran}\bigl(P^{(0)}\bigr)$ due to the fact that $\textrm{\normalfont ran}(P_j)\subset \textrm{\normalfont ran}(P)$. Thus, $\lambda^{(0)}\in\sigma\bigl(T^{(0)}\bigr)$ considered in $\textrm{\normalfont ran}\bigl(P^{(0)}\bigr)$ which is the definition of the eigenvalues $\lambda_j^{(0)}$ for $j\in\{1,\dots,k\}$. Thus, there is a unique $j_0\in\{1,\dots, k\}$ such that $\lambda^{(0)}=\lambda_{j_0}^{(0)}$ since $\lambda_j^{(0)}$ for all $j\in\{1,\dots,k\}$ are distinct. Nonetheless, since $\lambda^{(0)}$ is considered in $\textrm{\normalfont ran}\bigl(P_j^{(0)}\bigr)$, one obtains $j_0=j$ since $P_j^{(0)}P_{j_0}^{(0)}\ne O$ if and only if $j=j_0$. The proof is done.
\end{proof}

\section*{Acknowledgments} The second author's research is supported by the doctoral grant of Gran Sasso Science Institute 2014--2017.

\bibliographystyle{elsarticle-num-sort}
\bibliography{010417}
\end{document}